\DeclarePairedDelimiter{\abs}{\lvert}{\rvert}
\DeclarePairedDelimiter{\norm}{\lVert}{\rVert}
\newcommand{\euler}{\mathrm{e}}
\newcommand{\sfuc}{\mathrm{uc}}
\newcommand{\drm}{\mathrm{d}}
\newcommand{\CC}{\mathbb{C}}
\newcommand{\NN}{\mathbb{N}}
\newcommand{\RR}{\mathbb{R}}
\newcommand{\TT}{\mathbb{T}} % symbol for torus
\newcommand{\ZZ}{\mathbb{Z}}
\newcommand{\supp}{\operatorname{supp}}
\newcommand{\ran}{\operatorname{Ran}}
\newcommand{\Ker}{\operatorname{Ker}}
\newcommand{\per}{\mathrm{per}}
\newcommand{\dir}{\mathrm{Dir}}
\newcommand{\obs}{\mathrm{obs}}
\newcommand{\ur}{\mathrm{ur}}
\newcommand{\equiset}{{S_{\delta , Z}}}
\newcommand{\cD}{\mathcal{D}}
\newcommand{\cB}{\mathcal{B}}
\newcommand{\cT}{\mathcal{T}}
\newcommand{\cH}{\mathcal{H}}
\newcommand{\cL}{\mathcal{L}}
\newcommand{\cF}{\mathcal{F}}
\newcommand{\cU}{\mathcal{U}}
\newcommand{\Id}{\mathrm{Id}}
\newcommand{\cO}{\mathcal{O}}
\newcommand{\adm}{\mathrm{Adm}}
\newcommand{\x}{z}
\newtheorem{theorem}{Theorem}[section]
\newtheorem{lemma}[theorem]{Lemma}
\newtheorem{corollary}[theorem]{Corollary}
\theoremstyle{definition}
\newtheorem{definition}[theorem]{Definition}
\theoremstyle{remark}
\newtheorem{remark}[theorem]{Remark}
\newtheorem{example}[theorem]{Example}
  \definecolor{darkred}{rgb}{0.5,0,0}
  \definecolor{darkgreen}{rgb}{0,0.5,0}
  \definecolor{darkblue}{rgb}{0,0,0.5}
\begin{document}
%
%
% %%%%%%%%%%%%%%%%%%%%%%%%%%%%%%%%%%%%%%%%%%%%%%%%%%%%%%%%%%%%%%%%%%%%%%
% %---------------------------------------------------------------------
%
%         T I T L E
%
% %---------------------------------------------------------------------
% %%%%%%%%%%%%%%%%%%%%%%%%%%%%%%%%%%%%%%%%%%%%%%%%%%%%%%%%%%%%%%%%%%%%%%
%
%
\title[Control cost estimates for the heat equation on unbounded domains]
{Null-controllability and control cost estimates for the heat equation on
unbounded and large bounded domains}

\author[M. Egidi]{Michela Egidi}
\author[I. Naki\'c]{Ivica Naki\'c}
\author[A. Seelmann]{Albrecht Seelmann}
\author[M. T\"aufer]{Matthias T\"aufer}
\author[M. Tautenhahn]{Martin Tautenhahn}
\author[I. Veseli\'c]{Ivan Veseli\'c}

\address[M.~Egidi]{Ruhr Universit\"at Bochum, Germany}
\address[I.~Naki\'c]{University of Zagreb, Croatia}
\address[A.~Seelmann, I.~Veseli\'c]{Technische Universit\"at Dortmund, Germany}
\address[M.~T\"aufer]{Queen Mary University of London, United Kingdom}
\address[M.~Tautenhahn]{Technische Universit\"at Chemnitz, Germany}
\keywords{Unique continuation, uncertainty principle, heat equation, null-controllabil\-ity, control costs, observability estimate, unbounded domains
}
\subjclass[2010]{35Pxx, 35J10, 35B05, 35B60, 81Q10}

\begin{abstract}
We survey recent results on the control problem for the heat equation on unbounded and large bounded domains. First we formulate new uncertainty relations, respectively spectral inequalities.
Then we present an abstract control cost estimate which improves upon earlier results. The latter is particularly interesting when combined with the earlier mentioned spectral inequalities since it yields sharp control cost bounds in several asymptotic regimes. We also show that control problems on unbounded domains can be approximated by corresponding problems on a sequence of bounded domains forming an exhaustion. Our results apply also for the generalized heat equation associated with a Schr\"odinger semigroup.
\end{abstract}
\maketitle
\vspace{-0.8cm}
\tableofcontents

%
% %%%%%%%%%%%%%%%%%%%%%%%%%%%%%%%%%%%%%%%%%%%%%%%%%%%%%%%%%%%%%%%%%%%%%%
% %---------------------------------------------------------------------
%
%         INTRODUCTION
%
% %---------------------------------------------------------------------
% %%%%%%%%%%%%%%%%%%%%%%%%%%%%%%%%%%%%%%%%%%%%%%%%%%%%%%%%%%%%%%%%%%%%%%
%

\section{Introduction}

We survey several new results on the control problem of the heat equation on unbounded and large bounded domains.
The study of heat control on bounded domains has a long history, while unbounded domains
became a focus of interest only quite recently. In order to compare and interpolate these two geometric situations
it is natural to study the control problem on large bounded domains including a quantitative and explicit analysis
of the influence of the underlying geometry.
Here the term `large domain' may be made precise in at least two ways. For instance, it could mean that we study the control problem on a sequence of domains which form an exhaustion of the whole Euclidean space.
Alternatively, it could mean that the considered domain is large compared to some characteristic
length scale of the system, e.g.\ determined by the properties of the control/observability set.
Not surprisingly, the results on unbounded and large bounded domains which we present draw on concepts and methods which have been developed initially for control problems on
generic bounded domains.
While these previous results focused on giving precise criteria for (null-)controllability to hold, only a partial analysis of the influence of the underlying geometry  on
the control cost has been carried out.
Merely the dependence on the time interval length in which the control is allowed to take place has been studied thoroughly.
However,  recently there has been an increased interest in the role of geometry for the control cost.
We survey a number of recent results which perform a systematic analysis of the dependence of control cost estimates on
characteristic length scales of the control problem.
As a side benefit we obtain new qualitative results, most prominently a sharp,
i.e.\ sufficient and necessary, condition on the control/observability set which ensures
the null-controllability of the classical heat equation on the whole of $\RR^d$.

The results on null-controllability, in accordance with previous proofs, are obtained in two steps.
The first consists in some hard analysis and depends on the specific partial differential equation at hand
whereas the second one can be formulated in an abstract operator theoretic language. Let us discuss these two ingredients separately.

The mentioned hard analysis component of the proof consists in a variant of the \emph{uncertainty relation} or \emph{uncertainty principle}.
These terms stem from quantum physics and encode the phenomenon that the position and the momentum of a particle cannot be measured simultaneously
with arbitrary precision. Note that the momentum representation of an observable is obtained from the position representation via the Fourier transform.
Hence the fact that a non-trivial function and its Fourier transform cannot be simultaneously compactly supported is a particular manifestation of the uncertainty principle.
This qualitative theorem can be given a quantitative form  in various ways, e.g.~by the Paley-Wiener Theorem or the Logvinenko--Sereda Theorem which we
discuss in Section \ref{sec:UCP_Logvinenko-Sereda}.
If the property that a function has compactly supported Fourier transform is replaced by some similar restriction,
for instance that it is an element of a spectral subspace of a self-adjoint
Hamiltonian describing the total energy of the system, other variants of the uncertainty relation are obtained.
In the particular case that the Hamiltonian is represented by a second order elliptic partial differential operator with sufficiently regular coefficients
a particular instance of an uncertainty principle is embodied in (a quantitative version of) the \emph{unique continuation principle}.
The latter states that an eigenfunction (or, more generally a finite linear combination of eigenfunctions or elements from spectral subspaces associated to bounded energy intervals)
cannot vanish in the neighborhood of a point faster than a specified rate.
Such a quantitative unique continuation estimate in turn implies what is called a \emph{spectral  inequality}
in the context of control theory. This term was first coined for evolutions determined by the Laplace operator but is now used
also for abstract systems. Thus, it is hardly distinguishable from the notion of an uncertainty relation.
Note however that the term spectral inequality is used in other areas of mathematics with a  different meaning, e.g.~in Banach algebras or matrix analysis.

The second mentioned step uses operator theoretic methods and ODEs in Hilbert space to deduce
observability and controllability results from the hard analysis bound obtained in the first step.
There are several related but distinct approaches to implement this. One of them we present
in full for pedagogical reasons.
The other ones are not developed in this paper, but we discuss the resulting quantitative bounds on the control cost.
In fact, these seem to be better than what can be obtained by the mentioned pedagogical approach.

Let us point out several special features of this survey (and the underlying original research articles):
The uncertainty principles or spectral inequalities, and consequently the implied control cost estimates, which we develop, are scale-free.
This means that the same bound holds uniformly over a sequence of bounded domains which exhaust all of $\RR^d$.

The control cost estimates which we present are optimal in several asymptotic regimes.
More precisely, the estimate becomes optimal for the large time $T\to \infty$ and small time $T\to 0$ limit,
as well as for the homogenization limit.
The latter corresponds to a sequence of observability sets in $\RR^d$
which have a common positive density but get evenly distributed on finer and finer scales.
Effectively this leads to a control problem with control set equal to the whole domain but with a weight factor.

Last but not least, we point out two fields of analysis
where related or complementary results to spectral inequalities in control theory have been developed.
One of them is the theory of random Schr\"odinger operators.
There, uncertainty principles play a crucial role for the study of the integrated density of states and
proofs of Anderson localization.
The other is the use of uncertainty principles developed with the help
of complex or harmonic analysis to study semi-norms on $L^p$-spaces.

\section{Scale-free spectral inequalities based on complex analysis}

\label{sec:UCP_Logvinenko-Sereda}
In this section we give an overview of scale-free spectral inequalities obtained through complex analytical methods, in contrast to the ones obtained through Carleman
estimates, discussed in a subsequent section. The term \emph{scale-free} stands for the independence of the estimates on
% the class of functions analyzed and on the
the size of the underlying domain.
In particular, only a dependence on the dimension, on the geometry of the observability set, and on the class of functions considered is present.

These inequalities deal with the class of $L^p$-functions on $\RR^d$ with compactly supported Fourier transform or with $L^p$-functions on the $d$-dimensional torus
with sides of length $2\pi L$, $L>0$, with active Fourier frequencies contained in a parallelepiped of $\RR^d$, and with observability sets
which are measurable and well-distributed in $\RR^d$ in the following sense:

\begin{definition}
Let $S$ be a subset of $\RR^d$, $d\in\NN$. We say that $S$ is a \emph{thick set} if it is measurable  and there exist
$\gamma\in(0,1]$ and $a=(a_1,\ldots,a_d)\in \RR^d_+$
such that
\begin{equation*}%\label{def:thick-set}
\abs{S\cap (x+[0,a_1]\times\ldots\times [0,a_d])}\geq\gamma\prod_{j=1}^d a_j,\quad \forall \ x \in\RR^d.
\end{equation*}
Here $\abs{\cdot}$ denotes the Lebesgue measure in $\RR^d$. We will call thick sets $(\gamma,a)$-thick to emphasise the parameters.
\end{definition}

This geometric condition relates the volume of cubes to the volume of the part of these cubes inside $S$. It can equivalently also be
formulated with respect to balls in $\RR^d$, in which case $a\in\RR^d_+$ is replaced by a radius $r>0$. The latter is considered in the
proof of Lemma \ref{lem:thick} below.

Before presenting the most current results, we discuss how these spectral inequalities and the above geometric condition were identified originally.

\subsection{Earlier literature and historical development: Equivalent norms on subspaces}
Let $d\in\NN$, $p\in [1,\infty]$, $\Omega\subset\RR^d$, and $S\subset\RR^d$ be  measurable subsets.
We define
\[
F(\Omega, p):=\{f\in L^p(\RR^d) \colon \supp\hat{f}\subset \Omega\},
\]
where $\hat{f}$ is the Fourier transform of $f$. If $\Omega$ is bounded, we ask for which sets $S$
there exists a constant $C=C(S,\Omega)>0$ such that
\begin{equation}\label{eq:uncertainty}
 \norm{f}_{L^p(S)}\geq C \norm{f}_{L^p(\RR^d)}, \qquad \forall \ f\in F(\Omega, p).
\end{equation}

Since $\norm{\cdot}_{L^p(S)}$ defines a semi-norm on $F(\Omega, p)$ and $\norm{\cdot}_{L^p(S)}\leq \norm{\cdot}_{L^p(\RR^d)}$,
we are actually asking for which sets $S$ this semi-norm defines a norm equivalent to the $L^p$-norm on $\RR^d$.

This question was (at the best of our knowledge) first considered by Panejah in \cite{Panejah-61}.
The author treated the case $p=2$
and characterized the class of sets $S$ satisfying \eqref{eq:uncertainty} through a property of their complement.
Indeed, our initial question is equivalent to the problem for which sets $S$ there exists a constant $\tilde{C}=\tilde{C}(S^c,\Omega)\in (0,1)$,
$S^c$ being the complement of $S$ in $\RR^d$, such that
\begin{equation}\label{eq:uncertainty-complement}
 \norm{f}_{L^p(S^c)}\leq \tilde{C} \norm{f}_{L^p(\RR^d)}, \qquad \forall \ f\in F(\Omega, p).
\end{equation}

If we set
\begin{equation*}
\rho(S^c,p):=\sup\{ \norm{f}_{L^p(S^c)} \colon f\in F(\Omega,p), \norm{f}_{L^p(\RR^d)}=1\},
\end{equation*}
then \eqref{eq:uncertainty-complement} is satisfied for a $\tilde{C}<1$ if and only if $\rho(S^c,p)< 1$. The main result in \cite{Panejah-61} is a necessary condition
for $\rho(S^c,2)<1$.

\begin{theorem}[\cite{Panejah-61}]\label{thm:panejah61}
 Let $d\in\NN$. Let $S\subset\RR^d$ be a measurable set and $S^c$ its complement in $\RR^d$. Let $B(x,r)$ be the ball in $\RR^d$ centered at $x$ of radius $r>0$. If
\begin{equation}  \label{eq:not-thick}
 \beta(S^c):=\lim_{r\to +\infty} \sup_{x\in\RR^d} \frac{\abs{S^c\cap B(x,r)}}{\abs{B(x,r)}} =1,
\end{equation}
  then $\rho(S^c,2)=1$.
\end{theorem}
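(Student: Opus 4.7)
The plan is a direct construction: exhibit a sequence $(f_k)_{k\in\NN}$ in $F(\Omega,2)$ with $\norm{f_k}_{L^2(\RR^d)}=1$ and $\norm{f_k}_{L^2(S)}\to 0$; then $\norm{f_k}_{L^2(S^c)}\to 1$, forcing $\rho(S^c,2)=1$. Assume, as in the interesting case, that $\Omega$ contains an open ball $B(\xi_0,\varrho)$, and fix once and for all a Schwartz function $\phi$ with $\hat\phi\in C_c^\infty(B(\xi_0,\varrho))$ and $\norm{\phi}_{L^2}=1$. The underlying idea is to place translated, rescaled, and modulated copies of $\phi$ at centers $y_k$ where $S$ is very sparse, with the spread chosen to match the scale of that sparseness.

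Concretely, for parameters $n_k\ge 1$ and $y_k\in\RR^d$ to be chosen, set
\[
f_k(x) \;:=\; n_k^{-d/2}\, \exp\bigl(i(x-y_k)\cdot \xi_0(1-1/n_k)\bigr)\, \phi\bigl((x-y_k)/n_k\bigr).
\]
A direct computation gives $\norm{f_k}_{L^2(\RR^d)}=1$ and $\supp \hat f_k\subseteq B(\xi_0,\varrho/n_k)\subseteq B(\xi_0,\varrho)\subseteq\Omega$, so $f_k\in F(\Omega,2)$; the modulation factor is precisely what keeps the shrunken Fourier support inside $\Omega$ even when $\xi_0\neq 0$. Changing variables $u=(x-y_k)/n_k$ and writing $T_k:=(S-y_k)/n_k$, the quantity to control reduces to
\[
\norm{f_k}_{L^2(S)}^2 \;=\; \int_{T_k}|\phi(u)|^2\,\drm u \;\le\; \norm{\phi}_{L^\infty}^2\,\bigl|T_k\cap B(0,M_k)\bigr| + \int_{|u|>M_k}|\phi(u)|^2\,\drm u
\]
for any $M_k>0$. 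The second summand vanishes as $M_k\to\infty$ because $\phi$ is Schwartz, while the first summand equals $\norm{\phi}_{L^\infty}^2\, n_k^{-d}\,|S\cap B(y_k,n_k M_k)|$.

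The remaining step is the choice of scales. Set $M_k:=\log k$ and $\varepsilon_k:=M_k^{-(d+1)}$; by $\beta(S^c)=1$, for each $\varepsilon_k$ there is a threshold $r_0(\varepsilon_k)$ such that every $r\ge r_0(\varepsilon_k)$ admits some center $y$ with $|S\cap B(y,r)|<\varepsilon_k\,|B(y,r)|$. Take $r_k\ge\max\{k,r_0(\varepsilon_k)\}$ and a corresponding $y_k$, and set $n_k:=r_k/M_k$. Then $n_k\to\infty$, $n_k\ge 1$ for large $k$, and $n_k M_k=r_k$, so the first summand above is bounded by a constant multiple of $\varepsilon_k M_k^d=1/M_k$, which tends to $0$. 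The main technical obstacle is precisely this scale balance: $n_k$ must diverge (so that $f_k$ is spread widely enough to sample the macroscopically sparse ball $B(y_k,r_k)$) but must stay below $r_k/M_k$ (so the essential support still fits into that ball); simultaneously $\varepsilon_k$ has to vanish fast enough against $M_k^d$ yet slowly enough that $\beta(S^c)=1$ can still furnish the required balls. Because the hypothesis supplies no uniform rate of convergence in $r$, all of $M_k,\varepsilon_k,r_k,n_k$ must be made $k$-dependent in a mutually compatible way.
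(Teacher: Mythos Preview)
The paper does not give its own proof of this theorem; it is cited from \cite{Panejah-61} and stated without proof, so there is nothing in the paper to compare your argument against directly.

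That said, your argument is correct and follows the natural strategy: build normalized functions in $F(\Omega,2)$ whose $L^2$-mass is concentrated in regions where $S$ is sparse. The Fourier computation is right (the modulation by $\euler^{i(x-y_k)\cdot\xi_0(1-1/n_k)}$ recentres the dilated support at $\xi_0$, so $\supp\hat f_k\subset B(\xi_0,\varrho/n_k)\subset\Omega$), the change of variables is clean, and the balancing of the scales $M_k$, $\varepsilon_k$, $r_k$, $n_k$ works. One small remark: you use that $\phi\in L^\infty(\RR^d)$, which is immediate since $\hat\phi\in C_c^\infty$ forces $\phi$ to be Schwartz. Your explicit acknowledgment that you assume $\Omega$ contains an open ball is honest; the paper's context (bounded $\Omega$ used as Fourier support) makes this the relevant case, though strictly speaking the theorem as stated does not impose it.
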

%It is not difficult to see that $\beta(\RR^d)=1$ and that $\beta(S^c)=0$ if $S^c$ is a bounded set.

Let us observe that Eq.~\eqref{eq:not-thick} in Theorem~\ref{thm:panejah61} is just a different characterization for $S$ not being a thick set.
Indeed, we have the following:

\begin{lemma}\label{lem:thick}
Let $d\in\NN$. Let $S\subset\RR^d$ be a measurable set with complement $S^c$.
Then $S$ is thick if and only if $ \beta(S^c)<1$.
\end{lemma}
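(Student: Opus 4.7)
The plan is to prove the two implications separately: ``$S$ thick $\Rightarrow \beta(S^c) < 1$'' via a simple tiling estimate, and the converse by unpacking the definition of $\beta(S^c)$ together with the equivalence between cube- and ball-formulations of thickness that the paragraph preceding the lemma already alludes to.

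For the forward direction, fix parameters $(\gamma, a)$ witnessing thickness of $S$ and write $Q_a = [0,a_1] \times \cdots \times [0,a_d]$. Tile $\RR^d$ by the lattice translates $k + Q_a$, with $k \in a_1\ZZ \times \cdots \times a_d\ZZ$. For any $x \in \RR^d$ and $r > 0$, the union of tiles meeting $\ball{x}{r}$ is contained in $\ball{x}{r + \diam Q_a}$, while thickness forces $\abs{S^c \cap C} \leq (1-\gamma)\abs{C}$ on each tile $C$. Summing these inequalities,
\[
\abs{S^c \cap \ball{x}{r}} \;\leq\; (1-\gamma)\,\abs{\ball{x}{r + \diam Q_a}},
\]
so the ratio $\abs{S^c \cap \ball{x}{r}}/\abs{\ball{x}{r}}$ is bounded above by $(1-\gamma)\,((r + \diam Q_a)/r)^d$ uniformly in $x$. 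Letting $r \to \infty$ yields $\beta(S^c) \leq 1-\gamma < 1$.

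For the converse, suppose $\beta(S^c) < 1$ and pick $\delta > 0$ with $\beta(S^c) \leq 1 - 2\delta$. By definition of the limit (or limit superior) there exists $r_0 > 0$ such that $\abs{S^c \cap \ball{x}{r_0}} \leq (1-\delta)\abs{\ball{x}{r_0}}$, equivalently $\abs{S \cap \ball{x}{r_0}} \geq \delta\,\abs{\ball{x}{r_0}}$, for every $x$; this is thickness with respect to balls of radius $r_0$. To recover the cube formulation, observe that any translate of $[0, 2r_0]^d$ contains a ball of radius $r_0$, so $\abs{S \cap (x + [0, 2r_0]^d)} \geq \delta\,\abs{\ball{0}{r_0}}$, giving thickness with $a_j = 2r_0$ and $\gamma' = \delta\,\abs{\ball{0}{r_0}}/(2r_0)^d$.

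The only point requiring care is the boundary overshoot in the forward tiling step, where one must ensure that the discrepancy between $\ball{x}{r}$ and the union of tiles meeting it becomes negligible; this is handled by the elementary scaling $(r + \diam Q_a)^d/r^d \to 1$, uniformly in $x$. Beyond that, the lemma is essentially a translation between two equivalent ways of expressing a uniform positive lower density of $S$, and I do not anticipate any real obstacle.
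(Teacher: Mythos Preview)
Your proof is correct, but it follows a somewhat different organization than the paper's.

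For the forward implication (``$S$ thick $\Rightarrow \beta(S^c)<1$''), the paper simply asserts that ball-thickness at one radius $r$ with parameter $\gamma$ forces $\lim_{r\to\infty}\inf_x \abs{S\cap B(x,r)}/\abs{B(x,r)}\ge\gamma$, without spelling out why the single-scale bound propagates to all large radii. Your tiling argument actually supplies the missing justification for this step, so in that direction your argument is more complete.

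For the converse, the paper argues by contrapositive: assuming $S$ is \emph{not} thick, it makes the diagonal choice $\gamma=1/r$ to exhibit, for every $r$, a ball of radius $r$ on which $\abs{S\cap B}/\abs{B}<1/r$, forcing $\beta(S^c)=1$. You instead go directly from $\beta(S^c)<1$ to the existence of a single radius $r_0$ at which ball-thickness holds, and then convert to cube-thickness. Both are equally elementary; the paper's contrapositive avoids the explicit cube-to-ball conversion (it works entirely in the ball formulation, which the paragraph preceding the lemma already declares equivalent), while your direct route makes that conversion visible.
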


\begin{proof}
Let us assume that $S$ is not thick. Then for all $\gamma, r>0$ there exists a ball $B(x_{\gamma, r},r)$
centered at some point $x_{\gamma, r}\in\RR^d$ dependent on $\gamma$ and $r$, such that $\abs{S\cap B(x_{\gamma,r},r)}<\gamma \abs{B(x_{\gamma, r},r)}$.
Let now $r>0$ and choose $\gamma=1/r$, then
\begin{equation*}
\inf_{x\in\RR^d}\frac{\abs{S\cap B(x,r)}}{\abs{B(x,r)}}\leq \frac{\abs{S\cap B(x_{1/r,r},r)}}{\abs{B(x_{1/r,r},r)}} < \frac{1}{r},
\end{equation*}
which implies
\begin{equation*}
\lim_{r\to +\infty}\inf_{x\in\RR^d}\frac{\abs{S\cap B(x,r)}}{\abs{B(x, r)}} =0.
\end{equation*}
Since $\abs{S^c\cap B(x,r)}=\abs{B(x,r)}-\abs{S\cap B(x,r)}$, we obtain
\begin{equation*}
\lim_{r\to+\infty}\sup_{x\in\RR^d}\frac{\abs{S^c\cap B(x,r)}}{\abs{B(x,r)}} = 1 - \lim_{r\to+\infty}\inf_{x\in\RR^d} \frac{\abs{S\cap B(x,r)}}{\abs{B(x,r)}}=1,
\end{equation*}
that is, $\beta(S^c)=1$.

Conversely, if $S$ is a thick set, we find some positive $\gamma$ and $r$ such that
\begin{equation*}
\inf_{x\in\RR^d}\frac{\abs{S\cap B(x,r)}}{\abs{B(x,r)}}   \geq \gamma
\end{equation*}
and hence
$\lim_{r\to +\infty}\inf_{x\in\RR^d}\frac{\abs{S\cap B(x,r)}}{\abs{B(x, r)}} \geq \gamma$.
Arguing as above, we see that  $\beta(S^c)\leq 1-\gamma <1$, which completes the proof.
\end{proof}
To summarize, we have collected the following implications (at least for $p=2$):
\begin{multline*}
\eqref{eq:uncertainty} \ \text{holds for some} \ C(S,\Omega)>0
\Leftrightarrow
\eqref{eq:uncertainty-complement}\ \text{holds for some} \ \tilde C(S^c,\Omega)\in (0,1)
\\
\Leftrightarrow
\rho(S^c, 2) <1
\Rightarrow
\beta(S^c) <1
\Leftrightarrow
S \ \text{is thick}
\end{multline*}

So, this leaves open the (hard) question whether thickness of $S$ is a sufficient criterion to ensure the equivalence of norms in \eqref{eq:uncertainty}.

In the subsequent paper \cite{Panejah-62} Panejah shows that in dimension one the condition $\beta(S^c)<1$ is also sufficient for $\rho(S^c,2)<1$,
while in higher dimensions he provides a sufficient condition unrelated to the necessary one.
In both papers, the methods used rely essentially on $L^2$-properties of the Fourier transform.

A different approach was taken by Logvinenko \& Sereda \cite{Logvinenko-Sereda-74} and Kacnel'son \cite{Katsnelson-73}.
Using the theory of harmonic functions they considered the case $p\in (0,\infty)$ and, almost simultaneously, proved the following theorem.

\begin{theorem}[\cite{Logvinenko-Sereda-74, Katsnelson-73}]\label{thm:LS-Kat}
 Let $d\in\NN$, $\sigma>0$, $p\in [1,\infty)$, and $S\subset\RR^d$ be a measurable set.
Then the following statements are equivalent:
\begin{itemize}
 \item [(i)] $S$ is a thick set;
 \item [(ii)] there exists a constant $C=C(S,\sigma)>0$ such that for all
entire functions $f:\CC^d\rightarrow \CC$
satisfying $f|_{\RR^d}\in L^p(\RR^d)$
and
\[
\limsup_{\abs{z_1}+\ldots+\abs{z_d}\to\infty}\left(\sum_{i=1}^d\abs{z_i}\right)^{-1}\ln f(z) \leq \sigma
\]
we have
\begin{equation}\label{eq:LS-Kat}
 \norm{f}_{L^p(S)}\geq C\norm{f}_{L^p(\RR^d)}.
\end{equation}
\end{itemize}
\end{theorem}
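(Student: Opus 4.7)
The plan is to prove the two implications separately, with (ii)$\Rightarrow$(i) handled by contrapositive using explicit concentrated test functions, and (i)$\Rightarrow$(ii) reduced to a local Remez-type inequality for entire functions of exponential type on a single cube, which is then summed over a lattice tiling of $\RR^d$.

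For (ii)$\Rightarrow$(i), suppose $S$ is not thick. Then for every $n\in\NN$ there exists a cube $Q_n=x_n+\prod_{j=1}^d[0,a_j]$ (the side lengths $a_j$ may be fixed independently of $n$ after rescaling $f(z)\mapsto f(\lambda z)$) with $\abs{S\cap Q_n}/\abs{Q_n}<1/n$. I would construct entire functions $f_n$ of exponential type $\sigma$ that concentrate almost all their $L^p$-mass on $Q_n$, as tensor products of one-dimensional Fej\'er-type kernels whose Fourier transforms are supported in $[-\sigma/d,\sigma/d]$, translated so as to be centred on $Q_n$. The smallness of $\abs{S\cap Q_n}$ combined with a pointwise bound on $\abs{f_n}$ gives $\norm{f_n}_{L^p(S\cap Q_n)}\to 0$, while the concentration gives $\norm{f_n}_{L^p(Q_n^c)}\to 0$ relative to $\norm{f_n}_{L^p(\RR^d)}$, so that $\norm{f_n}_{L^p(S)}/\norm{f_n}_{L^p(\RR^d)}\to 0$, contradicting \eqref{eq:LS-Kat}.

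For (i)$\Rightarrow$(ii), after the rescaling $f(z)\mapsto f(\mathrm{diag}(a_1,\ldots,a_d)z)$ one may assume $a_j=1$ for all $j$ (absorbing the scale into an effective exponential type $\sigma'=\sigma\max_j a_j$). The target is then a local estimate: for every unit cube $Q$ and every measurable $E\subset Q$ with $\abs{E}\geq\gamma$, one has $\norm{f}_{L^p(Q)}\leq K(\gamma,\sigma',d)\norm{f}_{L^p(E)}$ with a constant independent of $Q$. Summing over a lattice of unit cubes tiling $\RR^d$ yields the global inequality with $C=K^{-1}$. The local inequality in one dimension is the classical Kacnel'son / Logvinenko--Sereda estimate: since $\log\abs{f}$ is subharmonic and the exponential-type condition gives $\log\abs{f(z)}\leq \sigma'\abs{\mathrm{Im}\,z}+O(1)$, harmonic majorisation on a disc containing $Q$ converts the problem into a Cartan/Jensen-type estimate for bounded holomorphic functions in the disc in terms of a boundary-diameter set of measure $\geq\gamma$.

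The main obstacle is passing from the one-dimensional Remez-type inequality to a genuinely multi-dimensional version compatible with the cube-wise thickness condition, because thickness of $S$ in $\RR^d$ does not guarantee that $1$-dimensional slices of $S\cap Q$ are themselves thick of comparable parameter. I would resolve this by a Fubini/pigeonhole argument: extract a measurable subset $E'\subset E=S\cap Q$ along which slices in the first coordinate have measure at least $\gamma/2$ on a set of $(x_2,\ldots,x_d)$ of measure at least $\gamma/2$, iterate coordinate-wise, and control the loss at each step. Combined with the $1$D estimate applied coordinate by coordinate this gives the local inequality. The other subtle point is tracking the explicit dependence of $K$ on $\gamma$ (polynomial versus merely logarithmic in $1/\gamma$), which is what the sharp downstream control cost estimates advertised in the survey actually require; I would expect to obtain a bound of the form $K\leq (C(d,\sigma')/\gamma)^{C'(d)(1+\sigma')}$ from such an iteration, which suffices for the qualitative equivalence claimed in Theorem~\ref{thm:LS-Kat}.
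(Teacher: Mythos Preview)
The paper does not give its own proof of Theorem~\ref{thm:LS-Kat}: it is stated as a classical result attributed to \cite{Logvinenko-Sereda-74,Katsnelson-73}, with only the remark that those authors used ``the theory of harmonic functions'' and obtained a constant of the form $C=c_1\euler^{\sigma c_2}$. There is therefore no in-paper argument to compare against. What the paper \emph{does} supply is the necessity direction for $p=2$ via Panejah's Theorem~\ref{thm:panejah61} together with Lemma~\ref{lem:thick}, and it records Kovrijkine's quantitative refinement (Theorem~\ref{thm:kovrijkine}) whose proof strategy---local complex-analytic/Remez estimates on unit cubes, then summation over a lattice---is exactly the scheme you outline for (i)$\Rightarrow$(ii). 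So your sufficiency sketch is aligned with the method the survey cites for the sharp version, including the identification of the genuine multi-dimensional obstacle (thickness does not pass to one-dimensional slices) and the standard Fubini/pigeonhole remedy.

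There is, however, a real slip in your (ii)$\Rightarrow$(i) argument. You fix the cube side lengths $a_j$, fix a single Fej\'er-type test function, and merely translate it to the centres $x_n$. Then $\norm{f_n}_{L^p(Q_n^c)}/\norm{f_n}_{L^p(\RR^d)}$ is a \emph{constant independent of $n$} (the tail mass of the fixed function outside a fixed-size cube), so your claim that it tends to zero is false. With this construction you only obtain $\limsup_n \norm{f_n}_{L^p(S)}/\norm{f_n}_{L^p(\RR^d)}\le \text{(fixed tail)}$, which is strictly less than $1$ but not arbitrarily small, and hence does not contradict the existence of \emph{some} $C>0$. The fix is a diagonal argument: given $\epsilon>0$, first pick the cube scale $a=a(\epsilon)$ large enough that the tail of your fixed test function outside a cube of that size is below $\epsilon$; then use non-thickness at \emph{that} scale to find $x_\epsilon$ with $\abs{S\cap Q_\epsilon}<\epsilon/\norm{f}_\infty^p$. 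This yields $\norm{f(\cdot-x_\epsilon)}_{L^p(S)}\lesssim\epsilon\norm{f}_{L^p(\RR^d)}$, and since $\epsilon$ is arbitrary no uniform $C>0$ can work. Equivalently, one may invoke Lemma~\ref{lem:thick} to produce balls of growing radius with vanishing relative $S$-density and argue as in Theorem~\ref{thm:panejah61}.
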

In addition, they exhibit the dependence on $\sigma$ establishing the relation
\[
C=c_1 e^{\sigma c_2},
\]
where $c_1$ and $c_2$ depend only on the thickness parameters of $S$ and the dimension $d$.

We observe that a function $f$ satisfying the assumption in part (ii) is called
entire $L^p$-functions of exponential type $\sigma$.
Equivalently, the space of such functions is
the space of functions with Fourier transform supported in ball of radius $\sigma$ (see for example \cite[Theorem IX.11]{ReedS-vol2-75} or \cite{Andersen-14}).
Hence, Theorem \ref{thm:LS-Kat} may be regarded as the first quantitative statement related to the problem formulated in \eqref{eq:uncertainty}.

\subsection{Current state of the art}
A quantitatively improved version of Theorem \ref{thm:LS-Kat} was given in early 2000's by Kovrijkine (see \cite{Kovrijkine-01} for the one dimensional case and
\cite{Kovrijkine-00} for the higher dimensional case). Using complex analytical techniques, he
shows that the constant $C(S,\Omega)$ in \eqref{eq:LS-Kat} depends polynomially on the thickness parameters of the set $S$. Moreover,
he analyzes the case when the support of the Fourier transform is contained in a finite union of
parallelepipeds, which may or may not be disjoint.
His approach is inspired by work of Nazarov \cite{Nazarov-93}, which studies topics related to the classical Turan Lemma \cite{Turan-46}.

More precisely, Kovrijkine proved the following statement.
\begin{theorem}[\cite{Kovrijkine-00}]   \label{thm:kovrijkine}
 Let $d\in\NN$, $p\in[1,\infty]$, and let $S$ be a $(\gamma, a)$-thick set in $\RR^d$.
\begin{itemize}
 \item [(i)] Let $J$ be a parallelepiped with sides of length $b_1,\ldots, b_d$ parallel to the coordinate axes and let $f\in F(J,p)$.
Set $b=(b_1,\ldots, b_d)$, then
\begin{equation} \label{eq:kovrijkine-1}
 \norm{f}_{L^p(S)}\geq \left(\frac{\gamma}{K_1^d}\right)^{K_1(a\cdot b+d)} \norm{f}_{L^p(\RR^d)},
\end{equation}
where $K_1>0$ is a universal constant.
\item [(ii)] Let $n\in\NN$ and let $J_1, \ldots,J_n$ be parallelepipeds with sides parallel to the coordinate axes
and of length $b_1,\ldots,b_d$. Let $f\in F(J_1\cup\ldots\cup J_n,p)$ and set $b=(b_1,\ldots, b_d)$. Then
\begin{equation}\label{eq:kovrijkine-2}
\norm{f}_{L^p(S)}\geq\left(\frac{\gamma}{K_2^d}\right)^{\left(\frac{K_2^d}{\gamma}\right)^n a\cdot b +n -\frac{p-1}{p}}\norm{f}_{L^p(\RR^d)},
\end{equation}
for $K_2>0$ a universal constant.
\end{itemize}
Here $a\cdot b$ denotes the Euclidean inner product in $\RR^d$.
\end{theorem}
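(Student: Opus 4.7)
The plan is to follow Kovrijkine's strategy, which combines a covering of $\RR^d$ by thickness cubes with a complex-analytic Remez--Tur\'an type inequality for entire functions of exponential type. First, I would partition $\RR^d$ into the axis-aligned grid $\{Q_\alpha\}_{\alpha\in\ZZ^d}$ of cubes with side lengths $(a_1,\dots,a_d)$; the thickness hypothesis then gives $\abs{S\cap Q_\alpha}\geq\gamma\abs{Q_\alpha}$ uniformly in $\alpha$. Since $\|f\|_{L^p(\RR^d)}^p=\sum_\alpha\|f\|_{L^p(Q_\alpha)}^p$, the task reduces to estimating $\|f\|_{L^p(Q_\alpha)}$ by $\|f\|_{L^p(S\cap Q_\alpha)}$ uniformly on a distinguished subfamily of cubes. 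To isolate such cubes, I would introduce a good/bad dichotomy: declare $Q_\alpha$ \emph{bad} if a weighted sum of derivative $L^p$-norms of the form $\sum_m (C(a\cdot b+d))^{-\abs{m}}\|\partial^m f\|_{L^p(Q_\alpha)}^p$ exceeds a fixed multiple of $\|f\|_{L^p(Q_\alpha)}^p$. The global Bernstein inequality $\|\partial^m f\|_{L^p(\RR^d)}\leq\prod_j b_j^{m_j}\|f\|_{L^p(\RR^d)}$, valid because $\supp\hat f\subset J$, together with a Chebyshev-type argument in $\alpha$, forces the bad cubes to carry at most a small fraction of the total $L^p$-mass, so attention can be restricted to good cubes.

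On a good cube, the controlled derivatives (together with a local $L^p$-to-$L^\infty$ Bernstein step) allow $f$ to be extended to an analytic function on a complex neighborhood of $Q_\alpha$ whose sup-norm is bounded by $C^{a\cdot b+d}\|f\|_{L^p(Q_\alpha)}$. The next ingredient is a one-dimensional Remez--Tur\'an type inequality in the spirit of Nazarov \cite{Nazarov-93}: if $g$ is analytic in a complex neighborhood of $[0,a_j]$ with sup-norm at most $M$ and $E\subset[0,a_j]$ is measurable, then
\[
\|g\|_{L^p([0,a_j])} \leq \Bigl(\frac{C\,a_j}{\abs{E}}\Bigr)^{K\log(M/\|g\|_{L^p([0,a_j])})}\|g\|_{L^p(E)}.
\]
Applied to $f$ along coordinate slices of $Q_\alpha$ and iterated across the $d$ coordinate directions via Fubini together with a measure-theoretic extraction of slices on which $S$ retains proportional thickness, this propagates to the cube estimate
\[
\|f\|_{L^p(Q_\alpha)} \leq \Bigl(\frac{K_1^d}{\gamma}\Bigr)^{K_1(a\cdot b+d)} \|f\|_{L^p(S\cap Q_\alpha)},
\]
which, summed in $\alpha$ (with the bad-cube mass absorbed on the right), gives \eqref{eq:kovrijkine-1}.

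For Part (ii), the same skeleton carries over, but the one-dimensional Remez--Tur\'an step must be iterated: along a generic line, a function whose Fourier transform lies in $n$ parallelepipeds behaves like a superposition of $n$ bandlimited pieces rather than a single one, and peeling off one piece at a time costs an additional factor in the exponent at each iteration. This iteration produces exactly the exponent $(K_2^d/\gamma)^n(a\cdot b)+n-(p-1)/p$ appearing in \eqref{eq:kovrijkine-2}. The main obstacle I expect is the one-dimensional Remez--Tur\'an step itself: obtaining \emph{polynomial} (rather than exponential) dependence on $\gamma$ is precisely Kovrijkine's improvement over Logvinenko--Sereda and Kacnel'son, and it hinges on bounding the effective ``degree'' exponent $N\sim\log(\text{complex sup}/L^p)$ by a multiple of $a\cdot b+d$ through the good-cube Bernstein estimates, rather than by the much cruder global exponential type of $f$. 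The calibration of the good-cube threshold so that this logarithm is controlled while bad cubes still carry only half the mass is the delicate technical point.
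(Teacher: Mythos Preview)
The paper does not supply its own proof of this theorem: it is stated as a citation of Kovrijkine's thesis \cite{Kovrijkine-00} (and \cite{Kovrijkine-01} for $d=1$), with only the surrounding commentary that the argument uses ``complex analytical techniques'' inspired by Nazarov \cite{Nazarov-93}. So there is nothing in the paper to compare your proposal against directly.

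That said, your sketch is a faithful outline of Kovrijkine's actual argument for part~(i): the grid decomposition, the good/bad dichotomy calibrated via Bernstein, the passage to a complex neighborhood on good cubes, and the one-variable Remez--Tur\'an step applied along line segments are all present in his proof, and your identification of the delicate point (controlling the logarithmic exponent by $a\cdot b+d$ rather than by the raw exponential type) is exactly right. One small inaccuracy: Kovrijkine works with sup-norms in the Remez step and reaches $L^p$ only afterwards by integrating over the good set of base points, not with an $L^p$-Remez inequality as you wrote it. For part~(ii), your description of ``peeling off one piece at a time'' is more heuristic than what Kovrijkine does; his argument for several parallelepipeds does not iterate part~(i) but instead runs a separate Tur\'an-type lemma for finite exponential sums (this is where the dependence on $n$ and the factor $(K^d/\gamma)^n$ enter), and the good/bad splitting is organized differently. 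Your plan would likely still close, but the route is not quite the one in \cite{Kovrijkine-00}.
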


The different nature of the constants in \eqref{eq:kovrijkine-1} and \eqref{eq:kovrijkine-2} originates in the different approaches used in the proofs.
While the bound in \eqref{eq:kovrijkine-2} allows for more general situations, it is substantially weaker than \eqref{eq:kovrijkine-1} in the
case $n=1$. The bound in \eqref{eq:kovrijkine-1}, however, is essentially optimal, which is exhibited in the following example
(see also \cite{Kovrijkine-00}).
% While the bound in \eqref{eq:kovrijkine-1} is essentially optimal, the bound in \eqref{eq:kovrijkine-2} does not seem to be so.
% The optimality of the constant in \eqref{eq:kovrijkine-1} is exhibited in the following example
% (see also \cite{Kovrijkine-00}).

\begin{example}\label{example:optimality-Kov-full-space}
Let $d\in\NN$, $p\geq 1$, $a_1=\ldots = a_d=1$, and $\gamma\in (0,1)$. We choose $b>0$ such that $\NN\ni\alpha:= b/(4\pi)\geq 3$. We consider the $1$-periodic set $A$ in $\RR$ such that
$A\cap \left[-\frac{1}{2},\frac{1}{2}\right]=\left[-\frac{1}{2}, -\frac{1}{2}+\frac{\gamma}{2}\right]\cup \left[ \frac{1}{2} -\frac{\gamma}{2}, \frac{1}{2}\right]$,
and define the set $S:=A\times\RR^{d-1}$.
Clearly, $S$ is a $(\gamma, 1)$-thick set in $\RR^d$. Let now $g\colon\RR^{d-1}\rightarrow \CC$ be an $L^p$-function such that $\supp\hat{g}\subset B(0,r)\subset\RR^{d-1}$
for some $r< b/4$, and let $f\colon\RR\rightarrow\RR$ defined as $f(x_1):= \left(\frac{\sin(2\pi x_1)}{x_1}\right)^\alpha$.
Since $\supp\hat{f}\subset \left[-\frac{b}{2}, \frac{b}{2}\right]$, the function
\[
\varphi\colon\RR^d\rightarrow \CC, \quad \varphi(x)=f(x_1)g(x_2,\ldots,x_d)
\]
has Fourier Transform supported in a cylinder inside the cube $\left[-\frac{b}{2}, \frac{b}{2}\right]^{d}$.
Theorem \ref{thm:kovrijkine}(i) says
\[
 \norm{\varphi}_{L^p(S)}\geq \left(\frac{\gamma}{K_1^d}\right)^{K_1(d b+d)} \norm{\varphi}_{L^p(\RR^d)},
\]
for a constant $K_1>0$.
We now show that the $L^p$-norm of $\varphi$ on $S$ can also be bounded from above by a constant of type $\gamma^b$. In order to do so,
it is enough to bound the $L^p$-norm of $f$ on $A$ from above.

We first observe that $\norm{f}_{L^p(\RR)}\geq 1$.
Then, taking into account that $\sin(2\pi t)/t\le 6\pi(1/2-t)$ for all $t\in[0,1/2]$, we calculate
\begin{align*}
\frac{\norm{f}_{L^p(A)}}{ \norm{f}_{L^p(\RR)}} &
\leq \left(\int_A \Big\vert\frac{\sin(2\pi x_1)}{x_1}\Big\vert^{p\alpha -2}\Big\vert\frac{\sin(2\pi x_1)}{x_1}\Big\vert^2 \drm x_1\right)^{1/p}\\
& \leq \left( \sup_{x_1\in A} \Big\vert \frac{\sin(2\pi x_1)}{x_1} \Big\vert^{p\alpha-2} \int_A\Big\vert\frac{\sin(2\pi x_1)}{x_1}\Big\vert^2 \drm x_1 \right)^{1/p}\\
& \leq \left( \sup_{x_1\in A} \Big\vert \frac{\sin(2\pi x_1)}{x_1} \Big\vert \right)^{\alpha-2/p}(2\pi^2)^{1/p}\\
& = \left( \frac{\sin(2\pi(1/2-\gamma/2))}{1/2-\gamma/2}\right)^{\alpha-2/p}(2\pi^2)^{1/p}\\
& \leq (2\pi^2)^{\alpha-2/p}\left(\frac{\gamma}{2}6\pi\right)^{\alpha-2/p} = \left(\frac{\gamma}{1/(6\pi^3)}\right)^{\alpha-2/p}.
\end{align*}
Using $\alpha -2/p=\frac{b}{4\pi} -2/p \geq \frac{b}{4\pi}-2 \geq 1$, we obtain for $\gamma < 1/(6\pi^3)$
\[
 \norm{f}_{L^p(A)} \leq \left(\frac{\gamma}{1/(6\pi^3)}\right)^{\frac{b}{4\pi} -2}\norm{f}_{L^p(\RR)}.
\]
Hence, by separation of variables we conclude
\begin{align*}
 \norm{\varphi}_{L^p(S)} & =\norm{f}_{L^p(A)}\norm{g}_{L^p(\RR^{d-1})} \\
& \leq \left(\frac{\gamma}{1/(6\pi^3)}\right)^{\frac{b}{4\pi} -2}\norm{f}_{L^p(\RR)}\norm{g}_{L^p(\RR^{d-1})}\\
& = \left(\frac{\gamma}{1/(6\pi^3)}\right)^{\frac{b}{4\pi} -2}\norm{\varphi}_{L^p(\RR^d)},
\end{align*}
which shows the optimality of the $\gamma^b$ term.
\end{example}

For $p=2$, the statement of Theorem \ref{thm:kovrijkine}(i) can be easily turned into a spectral inequality.
Let $E> 0$ and let $\chi_{(-\infty, E]}(-\Delta_{\RR^d})$ be the spectral projector of $-\Delta_{\RR^d}$ up to energy $E$, $\Delta_{\RR^d}$ being the Laplacian on $\RR^d$.
Then
\[
 \chi_{(-\infty, E]}(-\Delta_{\RR^d})\colon L^2(\RR^d)\longrightarrow \{f\in L^2(\RR^d) \ \colon \ \supp\hat{f}\subset B(0,\sqrt{E})\},
\]
where $B(0,\sqrt{E})$ is the Euclidean ball with center 0 and radius $\sqrt{E}$.
Clearly,
\[
\ran (\chi_{(-\infty, E]}(-\Delta_{\RR^d}))\subset \{f\in L^2(\RR^d) \ \colon \ \supp\hat{f}\subset [-\sqrt{E}, \sqrt{E}]^d\}.
\]
Therefore, as explained in \cite[\S 5]{EgidiV-18}, Theorem \ref{thm:kovrijkine}(i) implies:

\begin{corollary}\label{cor:spectral-inequality-full-space}
 Let $d\in\NN$. There exists a constant $K_1>0$ such that for all $E>0$, all $(\gamma, a)$-thick sets $S$, and all $f\in\ran (\chi_{(-\infty, E]}(-\Delta_{\RR^d}))$
we have
\begin{equation*} %\label{eq:spetral-inequality-full-space}
 \norm{f}_{L^2(S)}\geq \left(\frac{\gamma}{K_1^d}\right)^{K_1(2\sqrt{E}\norm{a}_{1}+d)} \norm{f}_{L^2(\RR^d)},
\end{equation*}
where $\norm{a}_{1}=a_1+\ldots+a_d$.
\end{corollary}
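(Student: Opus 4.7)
The plan is to reduce the corollary to a direct application of Theorem \ref{thm:kovrijkine}(i) with $p=2$ and an appropriately chosen parallelepiped $J$ whose side lengths encode the spectral cutoff $E$. The key observation, already highlighted in the excerpt, is that the range of $\chi_{(-\infty,E]}(-\Delta_{\RR^d})$ consists of $L^2$-functions whose Fourier transform is supported in the Euclidean ball $B(0,\sqrt{E})$, which is contained in the cube $[-\sqrt{E},\sqrt{E}]^d$. Thus the bulk of the work is already done: one only needs to feed the correct parameters into Kovrijkine's bound and compute $a\cdot b$.

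Concretely, I would proceed as follows. First, using the spectral theorem together with the identification of $-\Delta_{\RR^d}$ via the Fourier transform as multiplication by $|\xi|^2$, I would verify that any $f\in \ran(\chi_{(-\infty,E]}(-\Delta_{\RR^d}))$ satisfies $\supp \hat f \subset B(0,\sqrt E)\subset [-\sqrt E,\sqrt E]^d$. Consequently $f \in F(J,2)$ with $J:=[-\sqrt E,\sqrt E]^d$, which is a parallelepiped with sides parallel to the coordinate axes of length $b_1=\ldots=b_d=2\sqrt{E}$. Second, I would apply Theorem \ref{thm:kovrijkine}(i) with $p=2$, the given $(\gamma,a)$-thick set $S$, and $b=(2\sqrt{E},\ldots,2\sqrt{E})$, obtaining
\begin{equation*}
\norm{f}_{L^2(S)}\geq \left(\frac{\gamma}{K_1^d}\right)^{K_1(a\cdot b+d)}\norm{f}_{L^2(\RR^d)}
\end{equation*}
with the universal constant $K_1$ from Theorem \ref{thm:kovrijkine}. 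Third, I would compute $a\cdot b=\sum_{j=1}^d a_j\cdot 2\sqrt E=2\sqrt E\,\norm{a}_1$, which upon insertion yields exactly the stated exponent $K_1(2\sqrt E\,\norm{a}_1+d)$.

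There is no real analytical obstacle here; the statement is a corollary in the literal sense. The only point one has to be mindful of is the Fourier-transform normalization, so that the spectral support of $\chi_{(-\infty,E]}(-\Delta_{\RR^d})f$ is indeed $B(0,\sqrt E)$ rather than, say, $B(0,\sqrt E/(2\pi))$; with the convention used in the paper this is immediate, and otherwise only the numerical constant $2$ in front of $\sqrt E\,\norm{a}_1$ is affected, which can be absorbed into $K_1$. The rest is bookkeeping.
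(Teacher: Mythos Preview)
Your proposal is correct and follows essentially the same approach as the paper: the paper notes that $\ran(\chi_{(-\infty,E]}(-\Delta_{\RR^d}))$ consists of functions with Fourier support in $B(0,\sqrt{E})\subset[-\sqrt{E},\sqrt{E}]^d$ and then invokes Theorem~\ref{thm:kovrijkine}(i) with $b_j=2\sqrt{E}$, exactly as you outline. Your remark on the Fourier normalization is a reasonable caveat but not an issue here.
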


Using similar techniques as in \cite{Kovrijkine-00},
Logvinenko-Sereda-type estimates have been recently established also on the torus $\TT^d_L=[0,2\pi L]^d$ with sides of length $2\pi L$, $L>0$, $d\in\NN$,
for $L^p(\TT^d_L)$-functions with active Fourier frequencies contained in a parallelepiped of arbitrary size, see \cite{EgidiV-16-arxiv}. This leads to a
spectral inequality for linear combinations of eigenfunctions of the Laplacian on $\TT^d_L$ with suitable boundary conditions, see \cite[\S 5]{EgidiV-18}.

For $f\in L^p(\TT^d_L)$ we adopt the convention:
\begin{equation*}%\label{eq:convention}
\hat{f} \colon \left(\frac{1}{L}\ZZ\right)^d\rightarrow\RR^d,\qquad
\hat{f}\left(\frac{k_1}{L},\ldots,\frac{k_d}{L}\right)=\frac{1}{(2\pi L)^d}\int_{\TT_L^d} f(x)e^{-i\frac{1}{L}x\cdot k}\drm x.
\end{equation*}
In particular, $\supp\hat{f}\subset \left(\frac{1}{L}\ZZ\right)^d\subset\RR^d$.

\begin{theorem}[\cite{EgidiV-16-arxiv}]\label{thm:log-ser-cubes}
Let $p\in[1,\infty]$ and $L>0$. Let $\TT^d_L=[0,2\pi L]^d$, $f\in L^p(\TT^d_L)$,
and $S$ be a  $(\gamma, a)$-thick set with $a=(a_1,\ldots,a_d)$ such that $0<a_j\leq 2\pi L$ for all $j=1,\ldots,d$.
\begin{itemize}
 \item [(i)] Assume that $\supp\hat{f}\subset J$, where $J$ is a parallelepiped in $\RR^d$ with sides of length $b_1,\ldots, b_d$ and parallel to coordinate axes.
Set $b=(b_1,\ldots, b_d)$, then
\begin{equation}\label{eq:log-ser-cubes-1}
\norm{f}_{L^p(S\cap \TT^d_L)}\geq\Big(\frac{\gamma}{K_3^d}\Big)^{K_3 a\cdot b+\frac{6d+1}{p}}\norm{f}_{L^p(\TT^d_L)},
\end{equation}
where $K_3>0$ is a universal constant.
 \item [(ii)] Let $n\in\NN$ and assume that $\supp\widehat{f}\subset \bigcup_{l=1}^n J_l$, where each $J_l$ is a parallelepiped in $\RR^d$
with sides of length $b_1,\ldots,b_d$ and parallel to coordinate axes. Set $b=(b_1,\ldots, b_d)$, then
\begin{equation*}%\label{eq:log-ser-cubes-2}
 \norm{f}_{L^p(S\cap\TT^d_L)}\geq\Big(\frac{\gamma}{K_4^d}\Big)^{\big(\frac{K_4^d}{\gamma}\big)^n a\cdot b+n-\frac{(p-1)}{p}}\norm{f}_{L^p(\TT^d_L)},
\end{equation*}
for $K_4>0$ a universal constant.
\end{itemize}
Here $a\cdot b$ denotes the Euclidean inner product in $\RR^d$.
\end{theorem}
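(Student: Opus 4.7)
I would transplant Kovrijkine's scheme from the proof of Theorem \ref{thm:kovrijkine} to the torus. The key input is that every $f\in L^p(\TT^d_L)$ with $\supp\hat f\subset J$, lifted to a $(2\pi L)$-periodic function on $\RR^d$, is a finite linear combination of exponentials $\euler^{\mathrm{i}k\cdot x/L}$ with $k/L\in J$, hence extends to an entire function on $\CC^d$ whose growth in the $j$-th imaginary direction is controlled by the side length $b_j$ of $J$. This Paley--Wiener-type structure is exactly what is needed to run a Remez/Tur\'an argument on each coordinate slice. The hypothesis $a_j\leq 2\pi L$ guarantees that the torus can actually be tiled by finitely many boxes of sides comparable to $a$, without which the good/bad dichotomy below would be vacuous.

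\textbf{Step 1 (good/bad decomposition).} Partition $\TT^d_L$ into $N$ essentially disjoint axis-parallel boxes $Q$ of sides close to $a=(a_1,\ldots,a_d)$. Declare $Q$ \emph{bad} if $\norm{f}_{L^p(Q)}^p>(2^d/N)\norm{f}_{L^p(\TT^d_L)}^p$ and \emph{good} otherwise; a direct counting argument shows that good boxes collectively carry at least half of the total $L^p$-mass of $f$.

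\textbf{Step 2 (local Remez--Tur\'an estimate).} On each good $Q$, I would compare $\norm{f}_{L^p(Q)}$ with $\norm{f}_{L^p(S\cap Q)}$ by slicing. On an axis-parallel segment of length $\sim a_j$, the restriction of $f$ is an entire function on $\CC$ of exponential type $\lesssim b_j$, so a one-dimensional Tur\'an--Remez inequality combined with the $(\gamma,a)$-thickness of $S$ gives
\[
\norm{f}_{L^p(\text{segment}\cap Q)} \leq (C/\gamma^{1/d})^{K(a_j b_j+1)} \norm{f}_{L^p(\text{segment}\cap S\cap Q)} .
\]
Iterating through all $d$ directions, together with Fubini, produces a local bound of the form
\[
\norm{f}_{L^p(Q)} \leq (K_3^d/\gamma)^{K_3 a\cdot b+\mathrm{cst}} \norm{f}_{L^p(S\cap Q)} .
\]
Raising to the $p$-th power and summing over the good boxes using Step~1 then yields \eqref{eq:log-ser-cubes-1}; the factor $2^{1/p}$ and the H\"older conversions between $L^p$-norms on different subsets of $Q$ are absorbed into the additive $(6d+1)/p$ in the exponent.

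\textbf{Part (ii) and main obstacle.} For (ii) I would substitute the one-interval Tur\'an--Remez inequality in Step~2 by its $n$-interval analogue in the spirit of Nazarov \cite{Nazarov-93} (the analytic backbone of Theorem \ref{thm:kovrijkine}(ii)), whose constant degrades by a factor $(K/\gamma)^n$ per coordinate direction; all other steps transfer verbatim and produce the announced bound. The hard part throughout is Step~2: one must prove the Tur\'an--Remez inequality for trigonometric polynomials on a segment \emph{short} compared to the period $2\pi L$, with a constant depending only on $a_j b_j$ and $\gamma$, not on $L$. This scale-freeness is what makes the final estimate uniform in $L$, and requires that the auxiliary analytic disks used in Kovrijkine's Jensen-formula estimate be chosen so that no $L$-dependent quantity enters the bound. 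Verifying this is the essential new ingredient compared to \cite{Kovrijkine-00}.
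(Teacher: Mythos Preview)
The paper does not prove this theorem; it is quoted from \cite{EgidiV-16-arxiv} with the remark that it is obtained ``using similar techniques as in \cite{Kovrijkine-00}''. Your outline is precisely that: Kovrijkine's good/bad box decomposition, reduction to a one-dimensional Remez--Tur\'an/Jensen estimate on slices, and Nazarov's $n$-interval variant for part~(ii), all transplanted to the periodic setting. So your approach coincides with the one the paper points to.

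One comment on Step~2: the formulation ``iterating through all $d$ directions, together with Fubini'' slightly understates what actually happens in Kovrijkine's argument. The reduction to one dimension is not a naive Fubini slicing but goes through the selection of \emph{good points} (points where a Bernstein-controlled Taylor expansion holds) and then a one-variable Remez-type estimate on lines through such points; the thickness of $S$ enters via the measure of $S$ on those lines. Your identification of the genuine new difficulty---ensuring that the auxiliary complex-analytic radii are chosen so that no $L$-dependence leaks into the constant---is exactly right, and is what \cite{EgidiV-16-arxiv} has to verify beyond \cite{Kovrijkine-00}.
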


We emphasize that these estimates are uniform for all $L\geq (2\pi)^{-1}\max_{j=1,\ldots, d} a_j$
and are independent of the position of the  parallelepipeds $J_l$.
Note that for growing $L$ the number of possible Fourier frequencies in the set $\bigcup_{l=1}^n J_l$ grows unboundedly.

Let us also note that in~\cite[Corollary 3.3]{TenenbaumT-11}, related techniques from complex analysis, in particular a version of the Turan Lemma, are
used to establish an estimate similar to the one in Theorem~\ref{thm:log-ser-cubes}. However, there the control set $S$ is assumed to contain a parallelepiped
and the constant comparing $\|\cdot\|_{L^2(S)}$ and $\|\cdot\|_{L^2(\TT^d_L)}$ depends on its volume.

Comparing (i) and (ii) of the above theorem, we again see that, although (ii) allows for more general situations, the corresponding constant is worse that
the one in (i) in the case $n=1$. Example \ref{example:optimal-EV-cubes} below, inspired by Example \ref{example:optimality-Kov-full-space},
shows that for general $L^p$-functions on $\TT^d_L$ estimate \eqref{eq:log-ser-cubes-1} is optimal
up to the unspecified constant $K_3$. However, this bound may be improved once special classes of functions are considered,
for example Fourier series with few, but spread out Fourier coefficients, as discussed in Example \ref{example:not-optimal-EV-cubes}.
\begin{example}\label{example:optimal-EV-cubes}
Let $a_1=\ldots = a_d=1$,
$p\geq 1$, $b\geq 8\pi$, and $\varepsilon\in (0,1)$. We consider the set
\[
S=A_1\times\ldots\times A_d\subset \RR^d
\]
such that each $A_j$ is 1-periodic and $A_j\cap[0,1]= \left[\frac{1}{2}-\frac{\varepsilon}{2},\frac{1}{2}+\frac{\varepsilon}{2} \right]$.
Then, $S$ is $(\gamma, 1)$-thick in $\RR^d$ with $\gamma=\varepsilon^d$.

Let now $\NN\ni\alpha:=\lfloor\frac{b}{4\pi}\rfloor$ and $L=1/(2\pi)$.
On the torus $\TT^1_L=[0,2\pi L]=[0,1]$ and on its $d$-dimensional counterpart $\TT^d_L=[0,1]^d$ we consider the functions
\begin{align*}
& f\colon [0,1]\rightarrow \RR, \quad f(x):=(\sin(2\pi x))^\alpha\\
& g\colon [0,1]^d \rightarrow \RR, \quad g(x):=\prod_{j=1}^d f(x_j)= \prod_{j=1}^d \sin(2\pi x_j)^\alpha.
\end{align*}
Clearly, $\supp\hat{f}\subset [-2\pi\alpha, 2\pi\alpha]\subset \left[-\frac{b}{2},\frac{b}{2}\right]$ and $\supp \hat{g}\subset\left[-\frac{b}{2},\frac{b}{2}\right]^d$,
and the Fourier coefficients are uniformly spaced.

Consequently, by Theorem \ref{thm:log-ser-cubes}(i) we know
\[
 \norm{g}_{L^p(S\cap [0,1]^d)}\geq\Big(\frac{\varepsilon^d}{K_3^d}\Big)^{K_3 d b+\frac{6d+1}{p}}\norm{g}_{L^p([0,1]^d)}.
\]
We now show that the prefactor cannot be improved qualitatively.
To obtain an upper bound on $\norm{g}_{L^p(S\cap [0,1]^d)}$ we proceed as follows.
By separation of variables, $\norm{g}_{L^p(S\cap[0,1]^d)} \allowbreak= \prod_{j=1}^d \norm{f}_{L^p(A_j\cap [0,1])}$ and similarly for $\norm{g}_{L^p([0,1]^d)}$.
It is therefore enough to analyze the $L^p$-norm of $f$ on $A_1\cap [0,1]$.

By Jensen's inequality we have
\[
 \norm{f}_{L^p([0,1])}^p=\int_0^1\abs{\sin(2\pi x)}^{p\alpha} \drm x \geq \left(\int_0^1 \abs{\sin(2\pi x)}\drm x\right)^{p\alpha}= \left(\frac{2}{\pi}\right)^{p\alpha}.
\]
By symmetry of the sinus function, $\sin x \leq x$, the choice of $\alpha$, and the change of variable $y=2\pi x$, we estimate
\begin{align*}
\frac{ \norm{f}_{L^p(A_1\cap [0,1])}}{ \norm{f}_{L^p([0,1])}}
&
\leq \left(\frac{\pi}{2}\right)^{\alpha}\left(\int_{A_1\cap [0,1]} \abs{\sin(2\pi x)}^{p\alpha}\drm x\right)^{1/p}
\\
& = \left(\frac{\pi}{2}\right)^{\alpha}
\left(\frac{1}{\pi}\int_{0}^{\pi \varepsilon} \sin^{p\alpha}(y)\drm y\right)^{1/p}
\\
& \leq \left(\frac{\pi}{2}\right)^{\alpha}
\left(\frac{1}{\pi}\int_{0}^{\pi \varepsilon} y^{p\alpha}\drm y\right)^{1/p}
\\
& = \left(\frac{\pi}{2}\right)^{\alpha}
\left(\frac{1}{\pi} \frac{(\pi\varepsilon)^{1+p\alpha}}{1+p\alpha}\right)^{1/p}
\\
&
\leq \left(\frac{\varepsilon}{(2/\pi^2)}\right)^{\alpha+1/p}.
\end{align*}

Using $\alpha +1/p=\lfloor\frac{b}{4\pi}\rfloor +1/p \geq \frac{b}{4\pi} -1 \geq 1$
we obtain for $\varepsilon < 2/\pi^2$
\begin{equation*}
 \norm{f}_{L^p(A_1\cap [0,1])} \leq \left(\frac{\varepsilon}{(2/\pi^2)}\right)^{\frac{b}{4\pi} -1}\norm{f}_{L^p([0,1])},
\end{equation*}
which holds also for $\varepsilon \geq  2/\pi^2$ trivially.
Consequently,
\begin{equation*}
 \norm{g}_{L^p(S\cap[0,1]^d)}
 \leq
 \left(\frac{\gamma}{(2/\pi^2)^d}\right)^{\frac{b}{4\pi} -1}\norm{g}_{L^p([0,1]^d)}.
\end{equation*}
This shows that in general we cannot obtain a Logvinenko-Sereda
constant which is qualitatively better than $(\gamma/ c^d)^{c(b+d)}$, for some $c>0$.
\end{example}

\begin{example}\label{example:not-optimal-EV-cubes}
Let $b\in \NN$, $\gamma\in (0,1)$, $S$ be the $1$-periodic set such that $S\cap [0,1]=[0,\gamma]$, and
$f\colon [0,1]\to \RR$ be defined as $f(x):=\sin(2b\pi x)$. This function has two non-zero Fourier coefficients at $-2b\pi$ and $2b\pi$,
growing apart as $b$ increases. For the $L^1$-norm of $f$ on $[0,1]$ and $[0,\gamma]$ we calculate
\begin{equation*}
 \frac{\norm{f}_{L^1([0,\gamma])}}{\norm{f}_{L^1([0,1])}} \leq \frac{\pi}{2}\int_0^\gamma 2b\pi x \; \drm x = \frac{\pi^2}{2} b\gamma^2,
\end{equation*}
suggesting a behavior of type $b\gamma^2$ instead of $\gamma^b$ as in Theorem \ref{thm:log-ser-cubes}(i).
\end{example}

As anticipated, the case $p=2$ in Theorem \ref{thm:log-ser-cubes}(i) is of particular interest, since it can be interpreted as a statement for functions in the range
of the spectral projector of $-\Delta_{\TT^d_L}$ with periodic, Dirichlet, or Neumann boundary conditions.
Let $\Delta_{\TT^d_L}^P, \Delta_{\TT^d_L}^D, \Delta_{\TT^d_L}^N$ be the Laplacian on $\TT^d_L$ with periodic, Dirichlet, and Neumann boundary
conditions, respectively. To shorten the notation we set $\bullet\in\{P,D,N\}$. Let $\chi_{(-\infty, E]}(-\Delta_{\TT^d_L}^\bullet)$ be the spectral projector of $-\Delta_{\TT^d_L}^\bullet$
up to energy $E>0$. Namely, let $\lambda^\bullet$ and $\phi^\bullet_{\lambda^\bullet}$ be the eigenvalues and corresponding eigenfunctions of $-\Delta_{\TT^d_L}^\bullet$, then
\begin{equation*}
\chi_{(-\infty, E]}(-\Delta_{\TT^d_L}^\bullet): L^p(\TT^d_L)\longrightarrow \left\{ \sum_{\lambda^\bullet\leq E}\alpha_{\lambda^\bullet}\phi^\bullet_{\lambda^\bullet}(x) \ \vert \ \alpha_{\lambda^\bullet}\in\CC \right\}.
\end{equation*}
Similarly as before, Theorem \ref{thm:log-ser-cubes}(i) implies by simple arguments performed in \cite[\S 5]{EgidiV-18}:

\begin{corollary}\label{cor:cubes-spectral}
 Let $d\in\NN$, and let $\TT^d_L=[0,2\pi L]^d$, $L>0$. There exists a universal constant $K_5>0$ such that for all $L>0$,
all $(\gamma,a)$-thick sets $S\subset \RR^d$ with $a=(a_1,\ldots, a_d)$ such that
$0< a_j\leq 2\pi L$ for all $j=1,\ldots,d$, all $E>0$, and all $f\in \ran\big(\chi_{(-\infty, E]}(-\Delta_{\TT^d_L}^\bullet)\big)$ we have
\begin{equation}\label{eq:spectral-cube}
\norm{f}_{L^2(S\cap\TT^d_L)}\geq\Big(\frac{\gamma}{K_5^d}\Big)^{K_5 \sqrt{E}\norm{a}_{1}+\frac{6d+1}{2}}\norm{f}_{L^2(\TT^d_L)},
\end{equation}
where $\norm{a}_{1}=a_1+\ldots+a_d$.
\end{corollary}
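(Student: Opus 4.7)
In all three cases the strategy is to realise $f\in\ran\chi_{(-\infty,E]}(-\Delta_{\TT^d_L}^\bullet)$ as a function whose Fourier transform (on the correct torus) is supported in the cube $[-\sqrt E,\sqrt E]^d$, and then invoke Theorem~\ref{thm:log-ser-cubes}(i) with $b=(2\sqrt E,\ldots,2\sqrt E)$, so that $a\cdot b=2\sqrt E\,\norm{a}_1$.

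\emph{Periodic case.} The eigenfunctions of $-\Delta_{\TT^d_L}^P$ are $e^{ik\cdot x/L}$, $k\in\ZZ^d$, with eigenvalue $\abs{k}^2/L^2$; hence every $f\in\ran\chi_{(-\infty,E]}(-\Delta_{\TT^d_L}^P)$ satisfies $\supp\hat f\subset[-\sqrt E,\sqrt E]^d$ in the $\TT^d_L$-Fourier convention fixed above. A direct application of Theorem~\ref{thm:log-ser-cubes}(i) on $\TT^d_L$ then yields \eqref{eq:spectral-cube}.

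\emph{Dirichlet and Neumann cases.} Here the eigenfunctions are products of $\sin(n_jx_j/(2L))$ respectively $\cos(n_jx_j/(2L))$ with $n_j\in\NN$ or $n_j\in\NN\cup\{0\}$ and involve the frequencies $\pm n_j/(2L)\notin(1/L)\ZZ$, so Theorem~\ref{thm:log-ser-cubes}(i) cannot be applied on $\TT^d_L$ itself. I would extend $f$ to a function $\tilde f$ on the doubled torus $\TT^d_{2L}=[0,4\pi L]^d$ by odd (Dirichlet) respectively even (Neumann) reflection across each hyperplane $x_j\in\{0,2\pi L\}$; this extension is $4\pi L$-periodic in every coordinate, and on $\TT^d_{2L}$ it becomes a linear combination of $e^{ik\cdot x/(2L)}$ with $\abs{k_j}/(2L)\le\sqrt E$. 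In parallel, I extend $S\cap[0,2\pi L]^d$ by \emph{even} reflection across the same hyperplanes and then periodically to obtain a reflection-symmetric set $\tilde S\subset\RR^d$. A coordinatewise change of variables, using $\abs{\tilde f}^2=\abs{f}^2$ on each reflected copy together with the reflection symmetry of $\mathbf{1}_{\tilde S}$, gives
\[
\norm{\tilde f}_{L^2(\TT^d_{2L})}^2=2^d\norm{f}_{L^2(\TT^d_L)}^2,\qquad \norm{\tilde f}_{L^2(\tilde S\cap\TT^d_{2L})}^2=2^d\norm{f}_{L^2(S\cap\TT^d_L)}^2.
\]
Applying Theorem~\ref{thm:log-ser-cubes}(i) to $\tilde f$ and $\tilde S$ on $\TT^d_{2L}$ and cancelling the common factors $2^d$ then produces \eqref{eq:spectral-cube}.

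\emph{Main obstacle.} The delicate step is verifying that $\tilde S$ is a thick set in $\RR^d$ with parameters controlled by those of $S$. The original thickness constant $\gamma$ need \emph{not} survive the reflection: simple one-dimensional examples (e.g.\ $\bigcup_{k\in\ZZ}[k,k+\gamma]$ with $\gamma<1/2$ and $2\pi L=2$) exhibit a length-$a$ cube straddling a reflection hyperplane that misses $\tilde S$ entirely. The remedy is to observe that any cube of side $2a_j$ contains a sub-cube of side $a_j$ lying in a single sheet of the reflection decomposition, so that $(\gamma,a)$-thickness of $S$ passes to $(\gamma/2,2a)$-thickness of $\tilde S$ in the regime $a_j\le\pi L$; the complementary range $a_j\in(\pi L,2\pi L]$ is handled by a coarser covering on the scale of one period $4\pi L$. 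Since $2a_j\le 4\pi L$, the hypothesis of Theorem~\ref{thm:log-ser-cubes}(i) on $\TT^d_{2L}$ is met, and the multiplicative degradation of $(\gamma,\norm{a}_1)$ is absorbed into a single enlarged universal constant $K_5$ (e.g.\ $K_5=4K_3$) valid for $\bullet\in\{P,D,N\}$.
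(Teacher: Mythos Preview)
Your plan coincides with the paper's own argument: the periodic case is immediate from Theorem~\ref{thm:log-ser-cubes}(i), and for Dirichlet/Neumann the paper likewise reflects $f$ and $S\cap\TT^d_L$ to the doubled torus $\TT^d_{2L}$, periodizes, and applies Theorem~\ref{thm:log-ser-cubes}(i) there (referring to \cite[\S5]{EgidiV-18} for the thickness verification that you correctly flag as the crux). One minor slip: the coordinate-wise sub-cube argument actually yields $(\gamma/2^d,2a)$-thickness of $\tilde S$ rather than $(\gamma/2,2a)$, since the halving occurs independently in each direction, and the separate treatment of $a_j>\pi L$ is unnecessary because any interval of length $2a_j\le 4\pi L$ still contains an $a_j$-subinterval lying in a single reflection sheet; both points are harmless for the final constant $K_5$.
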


In the case of periodic boundary conditions, Corollary~\ref{cor:cubes-spectral} is a direct consequence of Theorem \ref{thm:log-ser-cubes}(i):
Since the eigenfunctions of $-\Delta_{\TT^d_L}^P$ are $e^{i(k/L)\cdot x}$ (up to a normalization factor), corresponding to eigenvalues $\norm{k}_{2}^2/L^2$, $k\in\ZZ^d$,
the Fourier frequencies of any $f\in \ran\big(\chi_{(-\infty, E]}(-\Delta_{\TT^d_L}^P)\big)$ are contained in $[-\sqrt{E}, \sqrt{E}]^d$,
and the statement follows immediately.

In contrast, when Dirichlet or Neumann boundary conditions are considered, the respective eigenfunctions do not have Fourier frequencies
contained in a compact set. However, once these functions are extended to functions on $\TT^d_{2L}$ in a suitable way depending on the
boundary conditions, the Fourier frequencies of the extensions are concentrated in $[-\sqrt{E}, \sqrt{E}]^d$.
Correspondingly, one can construct a new thick set with controllable thickness parameters
by first extending $S\cap \TT^d_L$ to $\TT^d_{2L}$ using reflections with respect to the boundary of $\TT^d_L$,
and then taking the union of translates of this set with respect to the group $(4\pi L\ZZ)^d$.
Finally, Theorem \ref{thm:log-ser-cubes}(i) applied to the extensions and the new thick set yields Corollary \ref{cor:cubes-spectral}.
For more details we refer the reader to \cite[\S 5]{EgidiV-18}.

\begin{remark}\label{rem:strip-spectral}
Recently, a Logvinenko-Sereda-type estimate has also been obtained for $L^2$-functions on the infinite strip $\Omega_L:=\TT^{d-1}_L\times\RR$,
$d\geq 2$ and $L>0$, having finite Fourier series as functions on $\TT^{d-1}_L$ and compactly supported Fourier transform as functions on $\RR$.
In this case, the set $S\subset\RR^d$ is assumed to be thick with parameters $a=(a_1,\ldots, a_d)\in\RR^d_+$ such that $a_j\leq 2\pi L$ for $j\in\{1,\ldots, d-1\}$,
and $\gamma\in (0,1]$, see \cite[Theorem 9]{Egidi}. With similar arguments as in \cite[\S 5]{EgidiV-18}, we obtain, as a consequence, a corresponding
variant of Corollary \ref{cor:cubes-spectral} on the strip, that is, a spectral inequality analogous to~\eqref{eq:spectral-cube} for functions
in the range of the spectral projector $\chi_{(-\infty, E]}(-\Delta_{\Omega_L})$, where $-\Delta_{\Omega_L}$ is the Laplacian on $\Omega_L$ with
either Dirichlet or Neumann boundary conditions.
\end{remark}

\section{Scale-free spectral inequalities based on Carleman estimates}
\label{sec:Carleman}

Most of the results which we present here have originated in works devoted to the
spectral theory and asymptotic analysis of evolution of solutions of random Schr\"odinger equations.
The interested reader may consult for instance the monographs \cite{Stollmann-01,Veselic-08,AizenmanW-15} for an exposition of this research area.
In this theory one is (among others) interested in lifting estimates for eigenvalues.
The particular task we want to discuss here can be formulated in
operator theoretic language in the following way: Given a self-adjoint and lower semi-bounded operator $H$ with purely discrete spectrum
$\lambda_1(H)\leq \ldots \leq \lambda_k(H)\leq \ldots $, a parameter interval $I \subset \RR$,
a cut-off energy $E \in\RR$,  and a positive semi-definite perturbation $B$, find a positive constant $C$ such that
\begin{equation*}
  \frac{\drm}{\drm t} \lambda_k(H+tB) \geq C
\end{equation*}
for all indices $k \in \NN$ for which the associated eigenvalue curve $I \ni t\mapsto \lambda_k(H+tB)$ stays
below the level $E$ for all $t \in I$.
Depending on the properties of $H$ and $B$, this exercise may be trivial, demanding, or impossible, so we should say a bit more about the structure of the operators of interest.

The self-adjoint Hamiltonian $H$ models a condensed matter system, and studying it will require investigating it on several scales,
on the one hand the macroscopic scale of the solid and on the other the microscopic scale of atoms.
Let us explain this in more detail:
If we choose a coordinate system such that the typical distance between atomic nuclei is equal to one,
the size $L$ of the macroscopic solid may be very large -- of the order of magnitude of $10^{23}$ or so.
Hence the Hamiltonian of the system $H$ will be defined on the Hilbert space $L^2(\Lambda_L)$ where $\Lambda_L = (- L/2, L/2)^d$ and $L\gg 1$.
Since often the only possibility to understand the full system is to consider first smaller sub-systems and subsequently analyze how they interact, one is also interested in intermediate scales.
Thus in the discussion which follows, the scale $L$ will always be larger than one, but will range over many orders of magnitude.
\par
The Hamiltonian $H$ will be a Schr\"odinger operator of the form $H=-\Delta+V$ in $L^2(\Lambda_L)$.
The electric potential $V$, which mainly models the force of the atomic nuclei in the solid on an electron wave packet,
will have a characteristic length scale corresponding to the typical distance between atoms (which as above we set equal to one).
This characteristic scale could manifest itself in different ways.
For instance, $V$ may be the restriction $\chi_{\Lambda_L} V_{\per}$ of a $\ZZ^d$-periodic potential $V_{\per}\colon \RR^d\to \RR$.
It could also have a structure which is not exactly periodic but incorporates some deviations from periodicity.
Furthermore, it can happen that the exact shape of $V$ is not known.
In this case, $V$ is modeled by a random field allowing for local fluctuations.
The values of the field at two different points with a distance of order one may be correlated,
but the field will exhibit a mixing behavior on large scales.
Since we study the system in $L^2(\Lambda_L)$ for many scales $L \geq 1$, the ratio between the scale $L$ of the whole system and the scale one can grow unboundedly.
In view of this challenge, for a comprehensive understanding of the system it is required to derive so-called \emph{scale-free} results, i.e. results which hold for all  scales $L \geq 1$,
or at least for an unbounded sequence of length scales.

In the light of this multi-scale structure, the problem formulated above takes now a more specific form:
We are given a bounded potential $V$ as well as the corresponding Schr\"odinger operator $H=-\Delta+V$ on $\Lambda_L$ with self-adjoint, say Dirichlet, boundary conditions, a bounded non-negative perturbation potential $W\colon \Lambda_L\to [0,\infty)$, which is the restriction to $\Lambda_L$  of a (more or less) periodic potential $W_\per\colon \RR^d\to [0,\infty)$, an interval $I \subset \RR$, a cut-off energy $E \in\RR$ and aim to find a positive constant $C$, independent of $L\geq 1$, such that
\begin{equation}\label{eq:eigenvalue-lifting}
  \frac{\drm}{\drm t} \lambda_k(H+tW) \geq C
\end{equation}
for all indices $k \in \NN$ for which the associated eigenvalue parametrization $I \ni t\mapsto \lambda_k(H+tB)$ stays below $E$ for all $t \in I$.
The real challenge of the problem is to obtain a bound $C$ which is scale-independent.
Furthermore, $C$ should only depend on some rough features of $V$ and $W$ such as their sup-norms but not on minute details of their shape.
This is required since, as explained above, the potentials $V$ and $W$ might by modeled as realizations of a random field where it is possible to control certain global properties, but not the detailed shape of the realization.

The eigenvalue lifting bound in \eqref{eq:eigenvalue-lifting} can be derived from an \emph{uncertainty relation for spectral projectors}
of the type
\begin{equation*}
\chi_{(-\infty,E]}(H)
W
\chi_{(-\infty,E]}(H)
\geq C
\chi_{(-\infty,E]}(H) ,
\end{equation*}
where the inequality is understood in quadratic form sense.
In fact, since the operators considered so far have purely discrete spectrum, this inequality can be rewritten in terms of linear combinations
of eigenfunctions, so that the conclusion  \eqref{eq:eigenvalue-lifting} is almost immediate.
If $H$ is the Dirichlet-Laplacian on a bounded domain
the above  uncertainty relation is called \emph{spectral inequality} in the literature on control theory.
It is also sometimes called a \emph{quantitative unique continuation estimate} (for spectral projectors),
because its proof uses a refined version of the proof of the classical qualitative unique continuation principle for solutions of second order elliptic operators, based on Carleman estimates.
In the particular case where, as explained above, the constant $C$ in the estimate is independent of the length scale $L\geq 1$,
the inequality is called \emph{scale-free unique continuation estimate}.
Let us present a summary of such results derived in the context of random Schr\"odinger operators.

\subsection{Development of scale-free unique continuation estimates applicable to Schr\"o\-dinger operators with random potential}
\label{sec:eigenfunction}

We will not be able to review all publications dealing with the topic, in particular older ones,
but have to be selective due to limitations of space. As the starting point we choose an important and intuitive
result of \cite{CombesHK-03} (which was fully exploited only in \cite{CombesHK-07}).

\begin{theorem}[\cite{CombesHK-03,CombesHK-07}] \label{thm:CombesHK}
Let $E\in \RR$, $V_{\per}\colon \RR^d\to \RR$ be a measurable, bounded and $\ZZ^d$-periodic potential
and $H_L^{\per}$ the restriction of $-\Delta +V_{\per}$ to the cube $\Lambda_L$ with periodic boundary conditions.
Denote by
$\chi_{(-\infty,E]}(H_L^{\per})$ the spectral projector of $H_L^{\per}$ associated to the energy interval $(\infty, E]$.
If  $\cO \neq \emptyset$ is an open $\ZZ^d$-periodic subset of $\RR^d$ and
$W\colon \RR^d \to [0, \infty)$ a measurable, bounded and $\ZZ^d$-periodic potential such that
$W \geq \chi_\cO$ then there exists a constant $C>0$ depending on $E$, $V_{\per}$ and $W$, but not on $L\in \NN$,
such that
\begin{equation*}
\chi_{(-\infty,E]}(H_L^{\per})
\ W \chi_{\Lambda_L} \
\chi_{(-\infty,E]}(H_L^{\per})
\geq C
\chi_{(-\infty,E]}(H_L^{\per}) \quad \text{for all scales} \ L\in \NN.
\end{equation*}
\end{theorem}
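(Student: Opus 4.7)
The plan is to exploit the $\ZZ^d$-periodicity of both $V_\per$ and $W$, together with the periodic boundary conditions on $\Lambda_L$, to reduce the scale-free estimate to a single-cell problem via the discrete Floquet--Bloch transform. For $L\in\NN$, the cube $\Lambda_L$ is the disjoint union of $L^d$ translates of $\Lambda_1$, and one obtains a unitary equivalence
\[
H_L^\per \;\cong\; \bigoplus_{\theta\in\Theta_L} H(\theta),
\]
where $\Theta_L\subset \TT^d_\ast := \RR^d/(2\pi\ZZ^d)$ is a finite set of $L^d$ quasi-momenta compatible with the boundary conditions and $H(\theta) = (-i\nabla + \theta)^2 + V_\per$ acts on $L^2(\Lambda_1)$ with periodic boundary conditions. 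The multiplication operator $W\chi_{\Lambda_L}$ commutes with this decomposition and acts fiberwise as multiplication by $W|_{\Lambda_1}$. Consequently, the claim with an $L$-independent constant is equivalent to establishing
\[
\chi_{(-\infty,E]}(H(\theta))\, W\, \chi_{(-\infty,E]}(H(\theta)) \;\geq\; C\, \chi_{(-\infty,E]}(H(\theta))
\]
uniformly in $\theta\in\TT^d_\ast$, since $\bigcup_{L\in\NN}\Theta_L\subset \TT^d_\ast$.

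For fixed $\theta$, $H(\theta)$ has compact resolvent, so $P_\theta := \chi_{(-\infty,E]}(H(\theta))$ has finite rank. If the inequality failed with $C=0$, some unit $\psi\in\ran P_\theta$ would satisfy $\langle\psi,W\psi\rangle=0$; since $W\geq\chi_\cO$ this forces $\psi\equiv 0$ on the nonempty open set $U := \cO\cap\Lambda_1$. Writing $\psi = \sum_j c_j\phi_j$ in an orthonormal basis of eigenfunctions of $H(\theta)$ corresponding to \emph{distinct} eigenvalues $\lambda_j\leq E$, and using that $\Delta\psi\equiv 0$ distributionally on $U$, one obtains by induction that $H^k\psi = \sum_j c_j\lambda_j^k\phi_j\equiv 0$ on $U$ for every $k\geq 0$. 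Invertibility of the Vandermonde matrix in the distinct $\lambda_j$ then yields $c_j\phi_j\equiv 0$ on $U$ for each $j$, and classical unique continuation for the second-order elliptic equation $(-\Delta+V_\per-\lambda_j)\phi_j=0$ with bounded $V_\per$ forces $\phi_j\equiv 0$ globally whenever $c_j\neq 0$, contradicting $\|\phi_j\|=1$. Hence some $C(\theta)>0$ works at every $\theta$.

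To pass from pointwise positivity to $\inf_{\theta\in\TT^d_\ast}C(\theta)>0$, one invokes Kato's analytic perturbation theory: the family $\theta\mapsto H(\theta)$ is real-analytic, so $\theta\mapsto P_\theta$ is norm-continuous away from the closed subvariety $\Sigma := \{\theta: E\in\sigma(H(\theta))\}$, which has empty interior. Compactness of $\TT^d_\ast$ delivers a uniform bound on the continuity set, and at any $\theta_0\in\Sigma$ one replaces $E$ by a slightly larger $E'\notin\sigma(H(\theta))$ throughout a small neighborhood of $\theta_0$ and uses the operator monotonicity $P_\theta\leq\chi_{(-\infty,E']}(H(\theta))$ together with a finite covering argument. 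The main obstacle is the unique continuation step above: UCP for a linear combination of eigenfunctions with \emph{different} eigenvalues does not follow directly from UCP for a single eigenfunction, and the purely qualitative nature of the Vandermonde-plus-UCP argument (yielding no explicit control on $C$ in terms of $\|V_\per\|_\infty$, $E$, or the geometry of $\cO$) is precisely what motivates the Carleman-based approach developed in the remainder of this section.
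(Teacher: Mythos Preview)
The paper does not supply its own proof of this theorem; it is quoted from \cite{CombesHK-03,CombesHK-07}, and the surrounding text only records that ``its proof invokes a compactness argument'' and ``is based on the Floquet--Bloch decomposition''. Your proposal follows exactly this route and is essentially correct, including your closing observation that the argument yields no explicit constant.

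Two points deserve tightening. First, in the Vandermonde step you tacitly assume the eigenvalues $\lambda_j$ are pairwise distinct, but $H(\theta)$ may have degenerate eigenvalues below $E$. The standard fix is to group by eigenvalue: write $\psi=\sum_k\psi_k$ with $\psi_k$ the component in the eigenspace for the \emph{distinct} value $\mu_k$. The Vandermonde argument applied to $H(\theta)^m\psi=\sum_k\mu_k^m\psi_k$ then yields $\psi_k\equiv 0$ on $U$ for each $k$, and since each $\psi_k$ solves $(-\Delta+V_\per-\mu_k)\psi_k=0$, unique continuation applies to $\psi_k$ directly.

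Second, the assertion that $\Sigma=\{\theta:E\in\sigma(H(\theta))\}$ has empty interior is equivalent to the absence of flat bands for $-\Delta+V_\per$; this is true but is itself a nontrivial theorem. Your argument does not actually require it: the $E'$ device you describe works at \emph{every} $\theta_0\in\TT^d_\ast$, not only at points of $\Sigma$. Indeed, the spectrum of $H(\theta_0)$ is discrete, so one may pick $E'>E$ in a spectral gap of $H(\theta_0)$; then $\theta\mapsto Q_\theta:=\chi_{(-\infty,E']}(H(\theta))$ is norm-continuous on a neighbourhood of $\theta_0$, the single-$\theta$ argument gives $Q_{\theta_0}WQ_{\theta_0}\ge c(\theta_0)Q_{\theta_0}$ with $c(\theta_0)>0$, continuity propagates this to nearby $\theta$, and $P_\theta\le Q_\theta$ transfers the bound to $P_\theta$. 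A finite subcover of $\TT^d_\ast$ by such neighbourhoods then gives the uniform constant, with no need to isolate a ``continuity set''.
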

The theorem gives no estimate on the constant $C>0$, since its proof  invokes a compactness argument.
Moreover, it is based on the Floquet-Bloch decomposition
and thus cannot be extended to a situation without periodicity. This explains also the restriction to integer valued scales $L\in \NN$.

An improvement of the above theorem with an explicit lower bound on $C$
was given in \cite{GerminetK-13b}.
The method which allowed to derive this quantitative estimate was  a \emph{Carleman estimate}.
It was the seminal paper \cite{BourgainK-05} which introduced Carleman estimates
to the realm of random Schr\"odinger operators and stimulated the further development.
With this tool at hand it was possible to circumvent the use of
Floquet-Bloch theory in the proof of scale-free unique continuation estimates.
Consequently, it was possible to remove the periodicity assumptions on the potential function $V$ and the set $\cO$.
They can be be replaced by a geometric condition which we define next.
\begin{definition}
Given $G,\delta > 0$, we say that a sequence $Z = (z_j)_{j \in (G \ZZ)^d} \subset \RR^d$ is \emph{$(G,\delta)$-equidistributed}, if
 \[
  \forall j \in (G \ZZ)^d \colon \quad  B(\x_j , \delta) \subset \Lambda_G + j .
\]
Corresponding to a $(G,\delta)$-equidistributed sequence $Z$ we define the set
\[
\equiset = \bigcup_{j \in (G \ZZ)^d } B(\x_j , \delta) ,
\]
see Fig.~\ref{fig:equidistributed} for an illustration. Note that the set $\equiset$ depends on $G$ and the choice of the $(G,\delta)$-equidistributed sequence $Z$.
\end{definition}
\begin{figure}[ht]\centering
\begin{tikzpicture}
\pgfmathsetseed{{\number\pdfrandomseed}}
\draw (-.2, -.2) grid (5.2,5.2);
\foreach \x in {0.5,1.5,...,4.5}{
  \foreach \y in {0.5,1.5,...,4.5}{
    \filldraw[fill=gray!70] (\x+rand*0.35,\y+rand*0.35) circle (0.15cm);
  }
}
% \foreach \y in {0,1,2,3,4,5}{
% \draw (\y,0) --(\y,5);
% \draw (0,\y) --(5,\y);
% }

\begin{scope}[xshift=-6cm]
\draw[] (-.2, -.2) grid (5.2,5.2);
\foreach \x in {0.5,1.5,...,4.5}{
  \foreach \y in {0.5,1.5,...,4.5}{
    \filldraw[fill=gray!70] (\x,\y) circle (1.5mm);
  }
}
% \foreach \y in {0,1,2,3,4,5}{
%   \draw (\y,0) --(\y,5);
%   \draw (0,\y) --(5,\y);
% }
\end{scope}
\end{tikzpicture}
\caption{Illustration of $\equiset \subset \RR^2$ for periodically (left) and non-periodically (right) arranged balls.\label{fig:equidistributed}}
\end{figure}
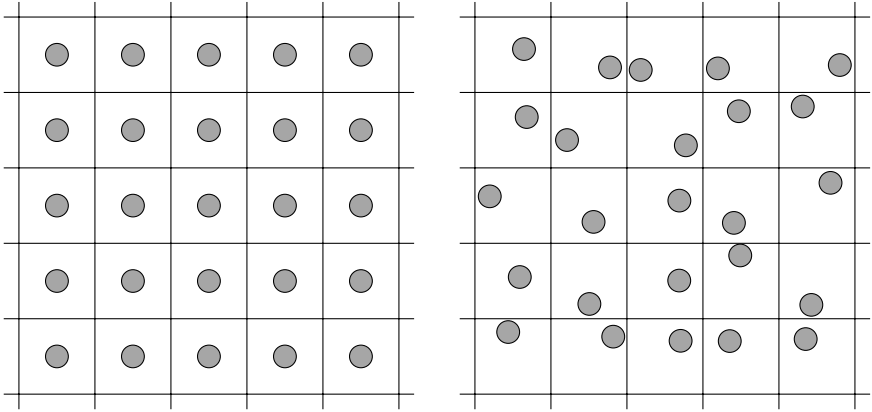
For $L > 0$ we denote by $\cD(\Delta^\per_L)$ and $\cD(\Delta^\dir_L)$ the domain of the Laplacian on $L^2(\Lambda_L)$ subject to periodic or Dirichlet boundary conditions.
With this notion at hand we formulate the following result:
\begin{theorem}[\cite{RojasMolinaV-13}]
\label{thm:RojasVeselic-UCP}
Let $E \in \RR$.
There exists a constant $K\in(0,\infty)$ depending merely on the dimension $d$,
such that for any $E \in \RR$, any $G > 0$, $\delta \in (0,G/2]$, any $(G,\delta)$-equidistributed sequence $Z$,
any measurable and bounded $V\colon {\RR^d}\to \RR$, any $L \in G \NN$ and any real-valued $\psi \in \cD(\Delta^\per_L) \cup \cD(\Delta^\dir_L)$ satisfying
$\lvert \Delta \psi \rvert \leq \lvert (V-E)\psi \rvert $ almost everywhere on $\Lambda_L$ we have
\begin{equation} \label{eq:Rojas-MolinaV-13}
\lVert \psi \rVert_{{L^2}(\Lambda_L)}
\geq
\lVert \psi \rVert_{{L^2}(\equiset \cap \Lambda_L)}
\geq
\left( \frac{\delta}{G}\right)^{K(1+G^{4/3} \lVert V-E \rVert_\infty^{2/3})}
\lVert \psi \rVert_{{L^2}(\Lambda_L)} .
\end{equation}
\end{theorem}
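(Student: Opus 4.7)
The approach I propose is a local-to-global reduction built on a quantitative Carleman estimate. Since $L \in G\NN$, the cube $\Lambda_L$ decomposes (up to a null set) into the translates $\Lambda_G + j$ with $j \in (G\ZZ)^d \cap \Lambda_L$, and each such cell contains exactly one ball $B(z_j,\delta)$ of $\equiset$. My target is the cell-wise inequality
\[
\lVert \psi \rVert_{L^2(\Lambda_G + j)} \leq \Bigl(\frac{G}{\delta}\Bigr)^{K(1+G^{4/3}\lVert V-E\rVert_\infty^{2/3})} \lVert \psi \rVert_{L^2(B(z_j,\delta))},
\]
since squaring and summing over $j$ reproduces \eqref{eq:Rojas-MolinaV-13}. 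The bound on $\lVert \psi \rVert_{L^2(\equiset \cap \Lambda_L)}$ by $\lVert \psi \rVert_{L^2(\Lambda_L)}$ is trivial, so the content is the opposite direction.

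For the local bound, I would rescale so that the cell has size comparable to $1$ and work around $z_j$. The analytic workhorse is a Carleman estimate of the form
\[
\tau^3 \int e^{2\tau \phi} \lvert u \rvert^2 \drm x + \tau \int e^{2\tau\phi} \lvert \nabla u \rvert^2 \drm x \leq C \int e^{2\tau\phi} \lvert \Delta u \rvert^2 \drm x,
\]
valid for $\tau \geq \tau_0$ and compactly supported $u$, with a radial weight $\phi$ adapted to $z_j$ that takes a large value on $B(z_j,\delta)$ and small values elsewhere in the cell. Plugging in $u = \chi \psi$ for a cutoff $\chi$ supported in the interior of $\Lambda_G + j$ and using the pointwise differential inequality $\lvert \Delta \psi \rvert \leq \lVert V - E \rVert_\infty \lvert \psi \rvert$ allows absorbing the potential contribution into the left-hand side as soon as $\tau^2 \gtrsim \lVert V - E \rVert_\infty$. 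The commutator $[\Delta,\chi]$ produces a boundary layer in which $\lvert \nabla \chi \rvert$ and $\lvert \Delta \chi \rvert$ are controlled, and there the weight $e^{2\tau\phi}$ is small. Comparing the ratios of $e^{2\tau\phi}$ on $B(z_j,\delta)$ to its bulk values yields the factor $(\delta/G)^{K\tau}$, and optimizing $\tau$ subject to $\tau^2 \gtrsim \lVert V-E\rVert_\infty$ and the geometric scaling of $\phi$ with $G$ produces precisely the exponent $1 + G^{4/3}\lVert V-E\rVert_\infty^{2/3}$.

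The step I expect to be most delicate is handling the cells adjacent to $\partial \Lambda_L$, where one cannot cut off naively because $\chi\psi$ can no longer be extended by zero while preserving the differential inequality. Under periodic boundary conditions $\psi$ extends by $L$-periodicity, while under Dirichlet conditions it extends by antisymmetric reflection across the faces of $\Lambda_L$; in both cases the extension still satisfies $\lvert \Delta \psi \rvert \leq \lvert (V-E)\psi \rvert$ after appropriately extending $V$, and the hypothesis $\delta \leq G/2$ ensures that after extension $B(z_j,\delta)$ still sits inside a full $G$-cell where the Carleman machinery can be deployed. This extension procedure, together with tracking how constants transform under the reflections, is where care is required; everything else reduces to a careful bookkeeping of the Carleman exponents and cutoff errors. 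Finally, the claimed dependence of $K$ only on $d$ follows from the fact that the Carleman weight and cutoffs are chosen universally and the only place $G$ and $\lVert V-E\rVert_\infty$ enter quantitatively is through the optimization of $\tau$.
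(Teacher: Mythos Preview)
Your local-to-global strategy and the use of a Carleman estimate are the right ingredients, but the cell-wise inequality you target,
\[
\lVert \psi \rVert_{L^2(\Lambda_G + j)} \leq \Bigl(\frac{G}{\delta}\Bigr)^{K(1+G^{4/3}\lVert V-E\rVert_\infty^{2/3})} \lVert \psi \rVert_{L^2(B(z_j,\delta))}
\]
is \emph{not} obtainable for every cell $j$ by the argument you sketch, and this is the genuine gap. When you insert $u=\chi\psi$ into the Carleman inequality, the commutator $[\Delta,\chi]\psi$ is supported on an annular layer and contributes a term of size $e^{2\tau\phi_{\mathrm{out}}}\bigl(\lVert\psi\rVert^2+\lVert\nabla\psi\rVert^2\bigr)$ over that layer. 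The weight there is indeed small relative to the weight on the ball, but the \emph{function} need not be: after rearranging one only gets an interpolation (three-ball type) estimate
\[
\lVert\psi\rVert_{L^2(\Lambda_G+j)}^2 \ \lesssim\ e^{c\tau}\,\lVert\psi\rVert_{L^2(B(z_j,\delta))}^2 \;+\; e^{-c'\tau}\,\lVert\psi\rVert_{H^1(\text{layer})}^2,
\]
and the second term cannot simply be dropped. If $\chi$ is supported inside the cell the layer lies in the cell and the estimate is circular; if $\chi$ reaches beyond the cell (using your extension of $\psi$) the layer lies in neighbouring cells and is a priori uncontrolled. Saying ``the weight is small there'' does not help, because $\psi$ may concentrate precisely there.

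What is missing is the \emph{dominating boxes} argument that is the core of \cite{RojasMolinaV-13}. After extending $\psi$ by periodicity or odd reflection as you correctly describe, one calls a cell $j$ \emph{dominating} if the $L^2$-mass of $\psi$ on a fixed dimensional enlargement of $\Lambda_G+j$ is at most a dimensional constant times the mass on $\Lambda_G+j$ itself. A simple counting/covering argument shows that the dominating cells together carry at least a fixed fraction (e.g.\ one half) of $\lVert\psi\rVert_{L^2(\Lambda_L)}^2$. On each dominating cell the annular term in the interpolation inequality \emph{can} now be absorbed into the left-hand side by choosing $\tau$ large enough (and controlling $\lVert\nabla\psi\rVert$ on the layer via a Caccioppoli inequality); this is exactly where the exponent $1+G^{4/3}\lVert V-E\rVert_\infty^{2/3}$ emerges. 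One then obtains your cell-wise bound \emph{only on dominating cells}, and summing over those already yields \eqref{eq:Rojas-MolinaV-13}. Without this selection step the argument does not close.
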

The last inequality implies by first order perturbation theory the lifting estimate \eqref{eq:eigenvalue-lifting}
with
\begin{equation}\label{eq:UCPconstant}
 C=\left( \frac{\delta}{G}\right)^{K(1+G^{4/3} \lVert V-E \rVert_\infty^{2/3})}.
\end{equation}
The theorem has been extended to $\RR^d$ in \cite{TautenhahnV-16}.
Lower bounds like \eqref{eq:Rojas-MolinaV-13} (with less explicit constants) have previously been known for
\begin{enumerate}[(1)]
  \item Schr\"odinger operators in one dimension (see \cite{Veselic-96}, \cite{KirschV-02b} where periodicity was assumed, and \cite{HelmV-07}, where the additional periodicity assumption was eliminated)
  \item energies $E$ sufficiently close to $\min \sigma(H)=\min \sigma(-\Delta+V)$
  (see \cite{Kirsch-96}, \cite{BourgainK-05} under a periodicity assumption and \cite{Germinet-08} without it),
   and similarly for
  \item energies $E$ sufficiently close to a spectral band edge of a periodic Schr\"odinger operator $-\Delta+V_\per$.
  (This has been implemented in \cite{KirschSS-98a} for periodic potentials using Floquet theory.)
\end{enumerate}
In the two latter cases one uses perturbative arguments, while in the one-dimensional situation one has methods from
ordinary differential equations at disposal.
The result of \cite{RojasMolinaV-13} unifies and generalizes this set of earlier results.

\begin{remark}[Dependence of the constant on parameters]
Apart form being scale independent the constant $C$ from \eqref{eq:UCPconstant}
is also explicit with respect to the model parameters.
Only the sup-norm $\lVert V \rVert_\infty$ of the potential enters, no knowledge of $V$ beyond this is used, in particular no regularity properties.
This is essential since in applications $V$ is chosen from an infinite ensemble of potentials with possibly quite different local features.
The constant is polynomial in $\delta$ and (almost) exponential in $\lVert V \rVert_\infty$.
\end{remark}

For $L > 0$ and $V \in L^\infty(\RR^d)$, we define the operator
\[
 H_L
 =
 - \Delta + V
 \quad
 \text{in $L^2(\Lambda_L)$}
\]
with Dirichlet, Neumann or periodic boundary conditions.
In view of the known scale-free uncertainty relation for periodic spectral projectors of \cite{CombesHK-03}, see Theorem \ref{thm:CombesHK}, the authors of \cite{RojasMolinaV-13} asked whether Ineq.~\eqref{eq:Rojas-MolinaV-13} holds also for linear combinations of eigenfunctions, i.e.\ for $\psi \in \ran \chi_{(-\infty , E]} (H_L)$. This is equivalent to
\begin{equation} \label{eq:full-uncertainty}
  \chi_{(-\infty,E]} (H_L) \, \chi_{\equiset\cap \Lambda_L} \, \chi_{(-\infty,E]} (H_L)
  \geq
  C \chi_{(-\infty,E]} (H_L) ,
\end{equation}
 with an explicit dependence of $C$ on the parameters $G$, $\delta$, $E$ and $\lVert V \rVert_\infty$  as in \eqref{eq:UCPconstant}.
 Here $\chi_{I} (H_L)$ denotes the spectral projector of $H_L$ associated to the interval $I$.
 If $\chi_{\equiset\cap \Lambda_L} $ is periodic and a lower bound for the potential $W$, we recover an estimate as in Theorem \ref{thm:CombesHK}.
 A partial answer was given in \cite{Klein-13}.
\begin{theorem}[\cite{Klein-13}] \label{thm:Klein-13}
There is $K = K (d)$ such that for all $E,G > 0$, all $\delta \in (0,G/2)$, all $(G,\delta)$-equidistributed sequences $Z$,
all measurable and bounded $V\colon {\RR^d}\to \RR$, all $L \in \NN$, all intervals $I \subset (-\infty , E]$ with
\[
 \vert I \rvert \leq 2 \gamma \quad \text{where} \quad
 \gamma^2 = \frac{1}{2G^4} \left(\frac{\delta}{G}\right)^{K \bigl(1+ G^{4/3}(2\lVert V\rVert_\infty + E)^{2/3} \bigr)} ,
\]
and all $\psi \in \ran \chi_{I} (H_L)$ we have
 \[
  \lVert \psi \rVert_{{L^2} (\equiset \cap \Lambda_L)}
  \geq G^4 \gamma^2 \lVert \psi \rVert_{{L^2} (\Lambda_L)} .
 \]
\end{theorem}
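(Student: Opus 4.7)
The plan is to reduce the bound for linear combinations of eigenfunctions to the single-function estimate of Theorem \ref{thm:RojasVeselic-UCP} via a perturbative argument that exploits the narrowness of the spectral window $I$. A function in $\ran\chi_I(H_L)$ is, in an $L^2$-sense, an approximate eigenfunction of $H_L$, and the threshold on $|I|$ is engineered precisely so that the resulting defect can be absorbed into the conclusion of the single-function bound.

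First, fix a reference energy $\lambda_0 \in I$ (the midpoint, say). By the spectral theorem, any $\psi \in \ran\chi_I(H_L)$ satisfies
\[
\lVert (H_L - \lambda_0)\psi \rVert_{L^2(\Lambda_L)} \leq \tfrac{|I|}{2}\lVert \psi \rVert_{L^2(\Lambda_L)} \leq \gamma \lVert \psi \rVert_{L^2(\Lambda_L)},
\]
which we read as $-\Delta\psi + (V-\lambda_0)\psi = g$ with $\lVert g\rVert_{L^2(\Lambda_L)} \leq \gamma\lVert \psi\rVert_{L^2(\Lambda_L)}$, so that $\psi$ differs from a genuine eigenfunction only by an $L^2$-small right-hand side. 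Moreover $|V-\lambda_0| \leq 2\lVert V\rVert_\infty + E$ pointwise, which matches the Carleman parameter hard-coded into the definition of $\gamma$.

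The substantive work is to revisit the Carleman-based proof of Theorem \ref{thm:RojasVeselic-UCP} given in \cite{RojasMolinaV-13} and carry an inhomogeneous source term through the argument. Applying the local Carleman inequalities on each translate $\Lambda_G + j$, combining them with Caccioppoli-type interior energy bounds, and stitching across the periodic grid yields an inhomogeneous scale-free estimate of the shape
\[
\lVert \psi\rVert_{L^2(\Lambda_L)} \leq C_{\mathrm{ucp}}^{-1}\,\lVert \psi\rVert_{L^2(\equiset \cap \Lambda_L)} + \Theta(G)\,\lVert g\rVert_{L^2(\Lambda_L)},
\]
where $C_{\mathrm{ucp}} = (\delta/G)^{K(1 + G^{4/3}(2\lVert V\rVert_\infty + E)^{2/3})}$ is the single-function constant from \eqref{eq:Rojas-MolinaV-13} and $\Theta(G) = O(G^2/C_{\mathrm{ucp}})$ is a loss picking up the local Caccioppoli scaling on cubes of side $G$. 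Inserting $\lVert g\rVert \leq \gamma\lVert \psi\rVert$ and using the prescribed $\gamma^2 = (2G^4)^{-1}C_{\mathrm{ucp}}^2$, which is calibrated so that $\Theta(G)\cdot\gamma \leq 1/2$, lets one absorb the defect term into the left-hand side; rearranging then gives $\lVert \psi\rVert_{L^2(\equiset \cap \Lambda_L)} \geq (C_{\mathrm{ucp}}/2)\lVert \psi\rVert_{L^2(\Lambda_L)} \geq G^4\gamma^2\lVert \psi\rVert_{L^2(\Lambda_L)}$, which is the claim.

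The hard part is exactly this inhomogeneous Carleman step: one must reopen the scale-free unique continuation arguments of \cite{BourgainK-05, RojasMolinaV-13} while keeping the source $g$ present throughout, and verify that the loss $\Theta(G)$ is no worse than $G^2/C_{\mathrm{ucp}}$. The power $G^4$ appearing in the threshold for $\gamma^2$ is precisely the room left after this bookkeeping, and getting the dependence on $G$, $\delta$, $\lVert V\rVert_\infty$ and $E$ to line up with the single-function constant is the delicate quantitative ingredient; once the inhomogeneous UCP is in hand, the absorbing step sketched above is essentially algebraic.
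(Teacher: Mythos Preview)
The paper does not give a proof of this theorem: it is stated as a cited result from \cite{Klein-13} and no argument is supplied in the survey. There is therefore no proof in the paper to compare your proposal against.

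That said, your outline matches the strategy of the original reference quite closely. Klein's argument in \cite{Klein-13} is precisely this perturbative reduction: a function $\psi \in \ran\chi_I(H_L)$ with $|I|$ small is an approximate eigenfunction, so one writes $(-\Delta + V - \lambda_0)\psi = g$ with $\lVert g\rVert \le \gamma\lVert\psi\rVert$, feeds this into an inhomogeneous version of the scale-free unique continuation machinery of \cite{BourgainK-05,RojasMolinaV-13}, and then chooses the threshold on $\gamma$ so that the source term can be absorbed. Your identification of the ``hard part'' as tracking the inhomogeneity through the Carleman/Caccioppoli bookkeeping with the correct powers of $G$ and $\delta/G$ is accurate; the specific form $\gamma^2 = (2G^4)^{-1}(\delta/G)^{K(\cdots)}$ is exactly the output of that calibration. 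One minor point: your sketch writes $\Theta(G) = O(G^2/C_{\mathrm{ucp}})$ and then says $\gamma^2 = (2G^4)^{-1}C_{\mathrm{ucp}}^2$ is chosen so that $\Theta(G)\gamma \le 1/2$; with $\Theta(G) \sim G^2/C_{\mathrm{ucp}}$ this would give $\gamma \lesssim C_{\mathrm{ucp}}/G^2$, i.e.\ $\gamma^2 \lesssim C_{\mathrm{ucp}}^2/G^4$, which is consistent, but you should be careful that the constants and exponents actually close up---in Klein's paper the precise powers come out of the detailed estimates rather than a heuristic $O(\cdot)$.
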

Again the scale-free unique continuation principle of
\cite{Klein-13} on the finite cube $\Lambda_L$
was adapted to functions on $\RR^d$ in \cite{TautenhahnV-16}.
Theorem~\ref{thm:Klein-13} left open what happens if the energy interval $I$ has length larger than $2\gamma$,
which is quite small for typical choices of $G, \delta, E, V$.
In particular, Theorem~\ref{thm:Klein-13} is not sufficient for applications in control theory which we discuss in Section~\ref{sec:uncertainty-control}.
The full answer to the above question, confirming \eqref{eq:full-uncertainty},
has been given in \cite{NakicTTV-15,NakicTTV-18}.
\begin{theorem}[\cite{NakicTTV-15,NakicTTV-18}] \label{thm:NakicTTV}
There is $K = K(d) > 0$ such that for all $G > 0$, all $\delta \in (0,G/2)$, all $(G,\delta)$-equidistributed sequences $Z$, all measurable and bounded $V: \RR^d \to \RR$, all $L \in G\NN$, all $E \geq 0$,
and all $\psi \in \mathrm{Ran} \chi_{(-\infty,E]}(H_L)$ we have
 \begin{equation*}
\lVert \psi \rVert_{{L^2} (\equiset \cap \Lambda_L)}^2
\geq C_{\sfuc} \lVert \psi \rVert_{{L^2} (\Lambda_L)}^2
\end{equation*}
where
\begin{equation*}
C_{\sfuc} = C_{\sfuc} (d, G, \delta ,  E  , \lVert V \rVert_\infty )
:=  \left(\frac{\delta}{G} \right)^{K \bigl(1 + G^{4/3} \lVert V \rVert_\infty^{2/3} + G \sqrt{E} \bigr)} .
\end{equation*}
\end{theorem}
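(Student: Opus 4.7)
My plan is to reduce the spectral-projector estimate to the single-function bound of Theorem~\ref{thm:RojasVeselic-UCP} by a \emph{ghost dimension} (or lifting) argument, which recasts an arbitrary linear combination of low-energy eigenfunctions as a single solution of a Schr\"odinger equation in one higher dimension. The hypothesis $\lvert \Delta u\rvert \leq \lvert (V-\lambda)u\rvert$ needed by Theorem~\ref{thm:RojasVeselic-UCP} then becomes automatic with eigenvalue effectively shifted to $0$, and the $\sqrt E$ dependence enters only through the growth rate of the lift.

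Concretely, write $\psi = \sum_{\lambda_k \leq E} c_k \phi_k$ in an orthonormal eigenbasis of $H_L$ and define
\[
\Psi(x, x_0) := \sum_{\lambda_k \leq E} c_k F_{\lambda_k}(x_0)\,\phi_k(x), \qquad (x, x_0) \in \Lambda_L \times \RR,
\]
where $F_\lambda$ is the real-valued solution of $F_\lambda'' = \lambda F_\lambda$ with $F_\lambda(0) = 0$, $F_\lambda'(0) = 1$, so that $F_\lambda(x_0) = \sinh(\sqrt{\lambda}\,x_0)/\sqrt{\lambda}$ for $\lambda > 0$, $F_0(x_0) = x_0$, and $F_\lambda(x_0) = \sin(\sqrt{-\lambda}\,x_0)/\sqrt{-\lambda}$ for $\lambda < 0$. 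A direct computation shows that $-\Delta_{d+1}\Psi + V\Psi = 0$ on $\Lambda_L \times \RR$, that $\Psi$ inherits the boundary conditions of $H_L$ in $x$, and that $\Psi|_{x_0 = 0} = 0$ with $\partial_{x_0}\Psi|_{x_0 = 0} = \psi$. Using orthonormality together with the Taylor expansion $F_\lambda(x_0) = x_0 + O(\lvert\lambda\rvert\, x_0^3)$ near $0$ and the bound $\lvert F_\lambda(x_0)\rvert \leq \sinh(\sqrt{E}\,\lvert x_0\rvert)/\sqrt{E}$ for $\lambda \leq E$, I would establish two-sided vertical comparisons of the form
\[
c_1\,T^{3}\,\|\psi\|_{L^2(\Lambda_L)}^2 \leq \|\Psi\|_{L^2(\Lambda_L \times (-T,T))}^2 \leq c_2\,\euler^{c_3\sqrt{E}\,T}\,\|\psi\|_{L^2(\Lambda_L)}^2,
\]
and entirely analogous estimates with $\Lambda_L$ replaced by $\equiset \cap \Lambda_L$.

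Next, since $\lvert \Delta_{d+1}\Psi\rvert \leq \|V\|_\infty\,\lvert\Psi\rvert$, I would apply a $(d+1)$-dimensional version of Theorem~\ref{thm:RojasVeselic-UCP} to $\Psi$ on the slab $\Lambda_L \times (-T, T)$ with $T$ of order $G$, using a $(G, \delta)$-equidistributed configuration $\widetilde Z \subset \RR^{d+1}$ whose horizontal projections recover $Z$ and whose vertical coordinate is bounded away from $\{x_0 = 0\}$ so that $\Psi$ does not vanish in $S_{\delta, \widetilde Z}$. The single-function estimate then delivers a factor $(\delta/G)^{K(1 + G^{4/3}\|V\|_\infty^{2/3})}$ between $\|\Psi\|_{L^2(S_{\delta, \widetilde Z})}$ and $\|\Psi\|_{L^2(\Lambda_L \times (-T,T))}$. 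A quantitative Taylor comparison, using $\Psi(x, x_0) = x_0\,\psi(x) + O(x_0^3\,H_L\Psi)$ and the bounds above, identifies $\|\Psi\|_{L^2(S_{\delta, \widetilde Z})}$ with a constant times $\|\psi\|_{L^2(\equiset \cap \Lambda_L)}$. Combining everything and choosing $T \asymp G$ converts the upper bound factor $\euler^{c_3\sqrt{E}\,T}$ into an extra contribution $(\delta/G)^{-K' G\sqrt{E}}$ when absorbed into the final exponent, producing precisely $K(1 + G^{4/3}\|V\|_\infty^{2/3} + G\sqrt{E})$.

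The main obstacle is that the $G\sqrt{E}$ contribution to the exponent does not arise from Theorem~\ref{thm:RojasVeselic-UCP} used as a black box, since that theorem is blind to the spectral cut-off $E$ once the ghost dimension has absorbed it into the potential $V$. To obtain the correct joint dependence on $\|V\|_\infty$ and $\sqrt{E}$, one must derive an adapted Carleman estimate directly in $d+1$ dimensions whose weight function is simultaneously tuned to the $(G, \delta)$-lattice structure, to the vanishing of $\Psi$ along $\{x_0 = 0\}$, and to the slab thickness $T \asymp G$; this weighted estimate is then iterated over unit cells of $(G\ZZ)^d$ to yield the scale-free bound uniformly in $L$. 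This balancing of three competing length scales against an auxiliary elliptic PDE is the technical heart of \cite{NakicTTV-15, NakicTTV-18} and is what elevates Klein's short-interval Theorem~\ref{thm:Klein-13} to the full spectral-projector statement.
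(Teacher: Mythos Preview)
Your ghost-dimension lift $\Psi(x,x_0)=\sum_{\lambda_k\le E} c_k F_{\lambda_k}(x_0)\phi_k(x)$ is exactly the construction the paper sketches (there written via the spectral calculus as $\Psi(\cdot,t)=\mathcal F_t\psi$, cf.\ Lemma~\ref{lemma:properties_Psi}), and your identification of $\hat H_\Gamma\Psi=0$ as the key structural fact is correct. So the overall architecture of your proposal matches the paper.

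The one place your outline diverges from the actual argument is the passage from $\Psi$ back to $\psi$. Your Taylor-comparison step ``$\Psi(x,x_0)=x_0\psi(x)+O(x_0^3)$'' cannot directly identify $\|\Psi\|_{L^2(S_{\delta,\widetilde Z})}$ with a multiple of $\|\psi\|_{L^2(S_{\delta,Z}\cap\Lambda_L)}$: the $(d{+}1)$-dimensional balls in $S_{\delta,\widetilde Z}$ do not fiber as $S_{\delta,Z}\times(\text{interval})$, and if you place them away from $\{x_0=0\}$ (as you propose, to avoid the zero set of $\Psi$) the leading Taylor term is no longer $x_0\psi$ on those balls. The paper resolves this not by pointwise comparison but by a \emph{second} Carleman estimate with boundary terms on the half-space $\{x_0>0\}$, in the style of \cite{LebeauR-95,JerisonL-99}, which directly controls the trace $\partial_{x_0}\Psi|_{x_0=0}=\psi$ by interior data for $\Psi$. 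Thus there are two distinct Carleman inputs: an interior one (in the spirit of Theorem~\ref{thm:RojasVeselic-UCP}) applied to $\Psi$, and a boundary one to extract $\psi$. Your final paragraph correctly anticipates that the black-box use of Theorem~\ref{thm:RojasVeselic-UCP} is insufficient and that tailored Carleman estimates are the technical core; the missing refinement is that the recovery step itself is a Carleman estimate rather than a Taylor bound.
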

Note that since $\Lambda_L$ is bounded, $H_L$ has compact resolvent, thus any
$\psi \in \mathrm{Ran} \chi_{(-\infty,E]} \allowbreak (H_L)$ is a finite linear combination
of eigenfunctions.
In \cite{TaeuferT-17} this assumption has been relaxed to allow certain infinite linear
combinations of eigenfunctions where the coefficients decay sufficiently fast.

\subsection{Current state of art}
Let $d \in \NN$. For $G > 0$ we say that a set $\Gamma \subset \RR^d$ is $G$-\emph{admissible}, if there exist $\alpha_i ,\beta_i \in \RR\cup \{\pm \infty\}$ with $\beta_i - \alpha_i \geq G$ for $i \in \{1,\ldots,d\}$, such that
\begin{equation}\label{eq:Gadmissible}
 \Gamma = \bigtimes_{i =1}^d (\alpha_i , \beta_i)
 \quad\text{and}\quad (-G/2 , G/2)^d \subset \Gamma.
\end{equation}
If instead of the second condition in  \eqref{eq:Gadmissible} one can only find a $\xi \in \RR^d$ such that the cube $(-G/2 , G/2)^d + \xi$ is contained in $\Gamma$, then
our assumption $(-G/2 , G/2)^d \subset \Gamma$ can be achieved by a global shift of the coordinate system.
For a $G$-admissible set $\Gamma$ and a real-valued $V \in L^\infty(\Gamma)$, we define the self-adjoint operator $H_\Gamma$ on $L^2 (\Gamma)$ as
\begin{equation*}
 H_\Gamma = -\Delta + V
\end{equation*}
with Dirichlet or Neumann boundary conditions.

\begin{theorem}[\cite{NakicTTV-18-arxiv}] \label{thm:NTTVS}
There is $K=K(d)>0$ depending only on the dimension, such that for all
$G>0$, all $G$-admissible $\Gamma \subset\RR^d$, all $\delta \in (0,G/2)$, all $(G,\delta)$-equidistributed sequences $Z$, all real-valued $V \in L^\infty (\Gamma)$, all $E \in \RR$, and all $\psi \in \ran \allowbreak \chi_{(-\infty , E]} (H_\Gamma)$ we have
\begin{equation*}
\lVert \psi \rVert_{L^2 (\equiset \cap \Gamma)}^2
\geq C_\sfuc^{(G)} \lVert \psi \rVert_{L^2 (\Gamma)}^2 ,
\end{equation*}
where
\[
 C_\sfuc^{(G)} = \sup_{\lambda \in \RR}
 \left(\frac{\delta}{G}\right)^{K \bigl(1 + G^{4/3}\lVert V-\lambda \rVert_\infty^{2/3} + G\sqrt{(E-\lambda)_+} \bigr)} ,
\]
and $t_+:=\max\{0,t\} $ for $t \in \RR$.
\end{theorem}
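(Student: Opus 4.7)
The plan is to reduce Theorem \ref{thm:NTTVS} to the cube case, Theorem \ref{thm:NakicTTV}, by two maneuvers: a spectral shift by $\lambda$ to generate the supremum, and an exhaustion of $\Gamma$ by bounded rectangular sub-domains. It suffices to establish, for each fixed $\lambda \in \RR$, the inequality with constant $(\delta/G)^{K(1 + G^{4/3}\|V-\lambda\|_\infty^{2/3} + G\sqrt{(E-\lambda)_+})}$; taking the supremum over $\lambda$ then yields $C_\sfuc^{(G)}$.

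\textbf{Paragraph 2 (Spectral shift).} For fixed $\lambda\in\RR$, write $H_\Gamma - \lambda = -\Delta + (V - \lambda)$, which is a Schr\"odinger operator with the same self-adjoint realization as $H_\Gamma$ and bounded potential $V-\lambda \in L^\infty(\Gamma)$. Because
\[
\chi_{(-\infty, E]}(H_\Gamma) = \chi_{(-\infty, E - \lambda]}(H_\Gamma - \lambda),
\]
it is enough to apply a Theorem \ref{thm:NakicTTV}-type estimate (extended to $G$-admissible domains, see below) to $H_\Gamma - \lambda$ with potential $V-\lambda$ and energy cutoff $E-\lambda$. The positive part $\sqrt{(E-\lambda)_+}$ in the exponent emerges naturally from the ghost-dimension construction: writing $\psi = \sum_k \alpha_k \phi_k$ with $(H_\Gamma-\lambda)\phi_k = \mu_k \phi_k$, $\mu_k \leq E-\lambda$, the auxiliary function $F(x,t)=\sum_k \alpha_k \phi_k(x) s_{\mu_k}(t)$, with $s_\mu(t) = \cosh(\sqrt{\mu}\,t)$ for $\mu \geq 0$ and $s_\mu(t) = \cos(\sqrt{-\mu}\,t)$ for $\mu < 0$, solves $(\Delta_x + \partial_t^2) F = (V-\lambda) F$ on $\Gamma \times \RR$; only the non-negative shifted eigenvalues contribute to growth in $t$, with maximal rate $\sqrt{(E-\lambda)_+}$.

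\textbf{Paragraph 3 (From cubes to unbounded $G$-admissible sets).} For a bounded rectangular $\Gamma$ with sides of arbitrary (finite) lengths, the proof of Theorem \ref{thm:NakicTTV} goes through with only cosmetic changes: the Carleman estimate is applied in balls of radius comparable to $G$ around the equidistributed points $z_j$, and never invokes the cubic symmetry of the ambient domain. For unbounded $\Gamma$, introduce the exhaustion $\Gamma_n := \Gamma \cap (-n, n)^d$, which for $n$ large enough consists of bounded $G$-admissible rectangles. Equip $H_{\Gamma_n}$ with the BC inherited from $H_\Gamma$ on $\partial\Gamma_n \cap \partial\Gamma$ and any self-adjoint BC on the newly introduced part of $\partial\Gamma_n$. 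Then $H_{\Gamma_n}$ converges to $H_\Gamma$ in strong resolvent sense (by form-monotonicity with Dirichlet BC on the new boundary in the Dirichlet case, by Mosco convergence of the Neumann forms in the Neumann case), whence $\chi_{(-\infty, E+\varepsilon]}(H_{\Gamma_n}) \to \chi_{(-\infty, E+\varepsilon]}(H_\Gamma)$ strongly for all but countably many $\varepsilon>0$. Apply the bounded-rectangle estimate to $\psi_n := \chi_{(-\infty, E+\varepsilon]}(H_{\Gamma_n})\,(\psi|_{\Gamma_n})$, using that the constant depends only on $G, \delta, \|V-\lambda\|_\infty, E-\lambda$ and \emph{not} on $n$; since $\equiset \cap \Gamma_n \nearrow \equiset \cap \Gamma$, pass to the limits $n\to\infty$ and then $\varepsilon \downarrow 0$.

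\textbf{Paragraph 4 (Main obstacle).} The hard part is the Carleman estimate underlying Theorem \ref{thm:NakicTTV}, which produces the precise exponent $K(1 + G^{4/3}\|\cdot\|_\infty^{2/3} + G\sqrt{(\cdot)_+})$; this is the substantial analytic work of \cite{NakicTTV-15, NakicTTV-18} and we import it without modification. The remaining obstacles are of a softer nature: (i) checking that the Carleman argument, being local to each ball $B(z_j,\delta)$, truly extends from cubes to bounded rectangles — essentially a matter of inspection; and (ii) justifying the spectral convergence in the Neumann exhaustion, where extension of functions by zero is not compatible with the form domain $H^1(\Gamma)$. The remedy for (ii) is the standard combination of Mosco/$\Gamma$-convergence of Neumann forms with the trick of smearing $E \leadsto E+\varepsilon$ to avoid discontinuities of the spectral distribution at eigenvalues, then invoking continuity of the exponent in $E$ to pass $\varepsilon \downarrow 0$.
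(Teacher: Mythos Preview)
Your strategy is plausible but takes a genuinely different route from the paper's. The paper does \emph{not} exhaust $\Gamma$ by bounded rectangles and pass to the limit via strong resolvent or Mosco convergence. Instead, it carries out the ghost-dimension construction directly on the (possibly unbounded) domain $\Gamma$ by replacing the eigenfunction expansion $\psi=\sum_k\alpha_k\phi_k$---which is unavailable when $H_\Gamma$ has essential spectrum---with the spectral-calculus operators
\[
\mathcal{F}_t=\int_{-\infty}^{\infty} s_t(\lambda)\,\drm P_{H_\Gamma}(\lambda),\qquad \Psi(x,t)=(\mathcal{F}_t\psi)(x),
\]
with $s_t(\lambda)=\sinh(\sqrt{\lambda}\,t)/\sqrt{\lambda}$ for $\lambda>0$, $s_t(0)=t$, and $s_t(\lambda)=\sin(\sqrt{-\lambda}\,t)/\sqrt{-\lambda}$ for $\lambda<0$. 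One then verifies (Lemma~\ref{lemma:properties_Psi}) that $\Psi\in\cD(\hat H_\Gamma)$ with $\hat H_\Gamma\Psi=0$ and $\partial_t\Psi(\cdot,0)=\psi$, so the Carleman machinery is applied to $\Psi$ directly on $\Gamma\times(-T,T)$ with no limit to take. The supremum over $\lambda$ arises, as in your Paragraph~2, from the spectral shift $H_\Gamma\mapsto H_\Gamma-\lambda$.

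What your exhaustion approach buys is a clean reduction to the already-established bounded case; the cost is the two technical obstacles you correctly flag. The rectangle extension is more than cosmetic once one inspects the reflection/extension arguments near $\partial\Gamma_n$ in \cite{NakicTTV-18}, and the Neumann exhaustion genuinely needs Mosco convergence plus the $E\leadsto E+\varepsilon$ smearing. The paper's direct route avoids both issues entirely, at the price of checking that the two Carleman estimates (the interior one and the boundary one from \cite{LebeauR-95,JerisonL-99}) go through for the functional-calculus $\Psi$ in place of a finite eigenfunction sum---this verification is the actual content of \cite{NakicTTV-18-arxiv}.
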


\begin{remark}
 If $\Gamma = \Lambda_L$ for some $L \geq G$, then $H_\Gamma$ has compact resolvent, and hence the spectrum of $H_\Gamma$ consists of a non-decreasing sequence of eigenvalues whose only accumulation point is at infinity. As a consequence, functions $\psi \in \ran \chi_{(-\infty , E]} (H_\Gamma)$ considered in Theorem~\ref{thm:NTTVS} are finite linear combinations of eigenfunctions corresponding to eigenvalues smaller than or equal to $E$. On the contrary, if $\Gamma$ is an unbounded set like $\RR^d$ or an infinite strip, the bulk of the spectrum of $H_\Gamma$ will in general consist of essential spectrum, and eigenfunctions, if any exist, might span only a subspace. Hence, the subspace $\ran \chi_{(-\infty , E]}$ might be infinite dimensional -- a challenge.
\end{remark}

The proofs of Theorems~\ref{thm:RojasVeselic-UCP} and \ref{thm:Klein-13} are heavily based on the fact that the function $\psi$ satisfies the pointwise differential inequality $\lvert \Delta \psi \rvert \leq \lvert V \psi \rvert$ almost everywhere on $\Lambda_L$, or are perturbative arguments thereof. Functions from a spectral subspace as considered in Theorem~\ref{thm:NTTVS} do in general not have this property. In what follows, we explain one main idea how to bypass this difficulty.
It is inspired by a technique developed for operators with compact resolvent in the context of control theory for the heat equation, see~e.g.\ \cite{LebeauR-95,LebeauZ-98,JerisonL-99, RousseauL-12}.

We denote by $\{ P_{H_\Gamma}(\lambda)=\chi_{(-\infty, \lambda]}(H_\Gamma) \colon \lambda \in \RR \}$ the resolution of identity of $H_\Gamma$,
and define the family of self-adjoint operators $(\mathcal{F}_t )_{t \in \RR}$ on $L^2 (\Gamma)$ by
\[
 \mathcal{F}_t
 =
 \int_{- \infty}^\infty s_t(\lambda) \mathrm{d} P_{H_\Gamma}(\lambda)
 \quad
 \text{where}
 \quad
 s_t(\lambda)=\begin{cases}
        \sinh(\sqrt{\lambda} t)/\sqrt{\lambda}, & \lambda > 0 ,\\
        t, & \lambda=0,\\
        \sin(\sqrt{- \lambda} t)/\sqrt{- \lambda}, & \lambda <0.
\end{cases}
\]
The operators $\mathcal{F}_t$ are self-adjoint, lower semi-bounded, and satisfy $\ran P_{H_\Gamma} (E) \subset \cD (\mathcal{F}_t)$ for $E \in \RR$, where
$\cD (\mathcal{F}_t)$ denotes the domain of $\mathcal{F}_t$.
For $\psi \in \ran P_{H_\Gamma} (E)$ and $T > 0$ we define the function $\Psi \colon \Gamma \times (-T,T) \to \CC$ as
\[
 \Psi(x,t)
 =
 (\mathcal{F}_t \psi)(x) .
\]
Note that $\Psi (\cdot , t) \in L^2 (\Gamma)$ for all $t \in (-T,T)$. Moreover, we define the (non-self-adjoint) operator $\hat H_\Gamma$ on
$L^2 (\Gamma \times (-T,T)) \cong L^2((-T,T),L^2(\Gamma))$
on
\[
 \cD(\hat H_\Gamma) = \left\{ \Phi\in L^2((-T,T),L^2(\Gamma)) \colon
 t \mapsto H_\Gamma(\Phi(t)) - (\frac{\partial^2}{\partial t^2} \Phi)(t) \in L^2((-T,T),L^2(\Gamma))\right\}
\]
by
\[
 \hat H_\Gamma = -\Delta + \hat V ,
 \quad \text{where} \quad
 \hat V (x,t) = V (x) .
\]
Here, $\Delta$ denotes the $d+1$-dimensional Laplacian.
We formulate a special case of Lemma 2.5 in \cite{NakicTTV-18-arxiv}.
\begin{lemma} \label{lemma:properties_Psi}
For all $T > 0$, $E \in \RR$ and all $\psi \in \ran P_{H_\Gamma} (E)$ we have:
\begin{enumerate}[(i)]
 \item The map $(-T,T) \ni t \mapsto \Psi (\cdot , t) \in L^2 (\Gamma)$  is infinitely $L^2$-differentiable with
 \begin{equation*}
 %\label{eq:property_F_1}
  \Bigl(\frac{\partial}{\partial t}   \Psi \Bigr) (\cdot , 0)
  = \psi.
 \end{equation*}
 \item
 $\Psi \in \cD (\hat H_\Gamma)$ and $\hat H_\Gamma \Psi = 0$.
\end{enumerate}
\end{lemma}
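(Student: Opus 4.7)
The plan is to exploit the fact that $s_t(\lambda)$, although defined piecewise, is actually the restriction to $\RR$ of an entire function of both variables given by the joint power series
\[
s_t(\lambda) = \sum_{n=0}^\infty \frac{\lambda^n t^{2n+1}}{(2n+1)!},
\]
which satisfies $\partial_t^2 s_t(\lambda) = \lambda s_t(\lambda)$, $s_0(\lambda) = 0$, and $(\partial_t s_t)(\lambda)|_{t=0} = 1$ for every $\lambda \in \RR$. Since $V \in L^\infty(\Gamma)$, the operator $H_\Gamma$ is bounded below by $-\lVert V \rVert_\infty$, so for any $\psi \in \ran P_{H_\Gamma}(E)$ the scalar spectral measure $\mu_\psi(\mathrm{d}\lambda) := \mathrm{d}\lVert P_{H_\Gamma}(\lambda)\psi\rVert^2$ is concentrated on the compact interval $[-\lVert V\rVert_\infty, E]$. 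On this compact set, $s_t$ and all of its $t$-derivatives are continuous in $(t,\lambda)$ and uniformly bounded in $t$ on any bounded subinterval of $\RR$.

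For part (i), the functional calculus gives $\lVert\Psi(\cdot,t)\rVert_{L^2(\Gamma)}^2 = \int |s_t(\lambda)|^2\, \mu_\psi(\mathrm{d}\lambda)$, which is uniformly bounded for $|t|<T$. To obtain smoothness in $t$, I would apply dominated convergence to the difference quotient
\[
\frac{1}{h}(\mathcal{F}_{t+h}-\mathcal{F}_t)\psi = \int \frac{s_{t+h}(\lambda)-s_t(\lambda)}{h}\, \mathrm{d}P_{H_\Gamma}(\lambda)\psi,
\]
using uniform convergence of the difference quotient on the compact support of $\mu_\psi$ and a mean value theorem majorant to pass to the limit. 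Iterating this argument yields $C^\infty$-regularity with the $k$-th derivative represented by $\int \partial_t^k s_t(\lambda)\, \mathrm{d}P_{H_\Gamma}(\lambda)\psi$. Evaluating at $t=0$ and using $s_0 \equiv 0$, $(\partial_t s_t)|_{t=0}\equiv 1$ then gives $\Psi(\cdot,0)=0$ and $(\partial_t \Psi)(\cdot,0)=\psi$.

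For part (ii), uniform boundedness of $\lVert\Psi(\cdot,t)\rVert_{L^2(\Gamma)}$ yields $\Psi \in L^2((-T,T),L^2(\Gamma))$. Since the spectral support of $\Psi(\cdot,t)=\mathcal{F}_t\psi$ remains contained in $[-\lVert V\rVert_\infty,E]$, we have $\Psi(\cdot,t)\in \cD(H_\Gamma)$ for every $t$ and, by the functional calculus, $H_\Gamma \Psi(\cdot,t) = \int \lambda s_t(\lambda)\, \mathrm{d}P_{H_\Gamma}(\lambda)\psi$. Applying the differentiation-under-the-integral argument from part (i) twice and using the key identity $\partial_t^2 s_t(\lambda)=\lambda s_t(\lambda)$ gives $\partial_t^2 \Psi(\cdot,t) = H_\Gamma \Psi(\cdot,t)$ in $L^2(\Gamma)$. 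Decomposing the $(d+1)$-dimensional Laplacian as $\Delta = \Delta_x + \partial_t^2$, we obtain $\hat H_\Gamma \Psi = H_\Gamma \Psi - \partial_t^2 \Psi = 0$, which trivially belongs to $L^2(\Gamma\times(-T,T))$, confirming $\Psi\in \cD(\hat H_\Gamma)$ and $\hat H_\Gamma\Psi=0$. The only genuine technical point is justifying the interchange of $\partial_t^k$ and $H_\Gamma$ with the spectral integral, and this is rendered routine by the compactness of $\mathrm{supp}\,\mu_\psi$, which comes for free from $V \in L^\infty(\Gamma)$ together with $\psi \in \ran P_{H_\Gamma}(E)$.
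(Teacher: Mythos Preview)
Your argument is correct. The power-series identity $s_t(\lambda)=\sum_{n\ge 0}\lambda^n t^{2n+1}/(2n+1)!$ is valid and immediately gives $\partial_t^2 s_t=\lambda s_t$, $s_0\equiv 0$, $(\partial_t s_t)|_{t=0}\equiv 1$; the key observation that $V\in L^\infty(\Gamma)$ forces $\operatorname{supp}\mu_\psi\subset[-\lVert V\rVert_\infty,E]$ is exactly what makes all the dominated-convergence steps routine, and your verification of $\Psi\in\cD(\hat H_\Gamma)$ matches the domain as defined in the paper.

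Note, however, that the paper does \emph{not} supply a proof of this lemma: it merely quotes it as a special case of Lemma~2.5 in \cite{NakicTTV-18-arxiv}. So there is no in-text argument to compare against. Your proof is precisely the standard functional-calculus verification one would expect for this statement, and is in the same spirit as the proof in the cited reference.
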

From Lemma \ref{lemma:properties_Psi} part (ii) we infer that $\Psi$ is an eigenfunction of $\hat H_\Gamma$. This allows us to apply
similar techniques to the function $\Psi$ as used in the proofs of the results presented in Subsection~\ref{sec:eigenfunction}.
In order to recover properties of $\psi$ from properties of $\Psi$ one combines a second Carleman estimate with boundary terms
already used in \cite{LebeauR-95,JerisonL-99}
with part (i) of Lemma~\ref{lemma:properties_Psi}.

\section{From uncertainty to control}
\label{sec:uncertainty-control}

We introduce the notion of (null-)controllability in an abstract setting.
Let $\cH$ and $\cU$ be Hilbert spaces, $A$ a lower semi-bounded, self-adjoint operator in $\cH$ and $B$ a bounded operator from
$\cU$ to $\cH$.
Given $T>0$, we consider the abstract, inhomogeneous Cauchy problem
\begin{equation}
\label{eq:abstract_control_system}
 \begin{cases}
  \frac{\partial}{\partial t} u(t) + A u(t) &= B f(t),\ t \in (0,T],\\
  u(0) &= u_0 \in \cH,
 \end{cases}
\end{equation}
where $u \in L^2((0,T), \cH)$ and $f \in L^2((0,T), \cU)$.
The function $f$ is also called~\emph{control function} or simply~\emph{control} and the operator $B$ is
called~\emph{control operator}.
The~\emph{mild solution} to~\eqref{eq:abstract_control_system} is given by the Duhamel formula
\begin{equation}
 \label{eq:Duhamel_formula}
 u(t)
 =
 \euler^{-t A} u_0 + \int_0^t \euler^{-(t-s) A} B f(s) \drm s, \quad t \in [0,T].
\end{equation}
One central question in control theory is whether, given an input state $u_0$, a time $T > 0$ and a target state $u_T$, it is
possible to find a control $f$, such that $u(T) = u_T$.

\begin{definition}
 \label{def:null_controllable}
 Let $T > 0$. The system~\eqref{eq:abstract_control_system} is~\emph{null-controllable in time $T$} if for every $u_0 \in \cH$ there exists a
 control $f = f_{u_0} \in L^2((0,T) , \cU)$ such that the solution of~\eqref{eq:abstract_control_system} satisfies $u(T) = 0$.
 In this case the function $f_{u_0} $ will be called a \emph{null-control} in time $T$ for the initial state $u_0$.
\par
The~\emph{controllability map} or~\emph{input map} is the mapping $\cB^T \colon  L^2((0,T), \cU) \to \cH$ given by
\[
 \cB^T f = \int_0^T \euler^{-(T-s) A} B f(s) \drm s .
 \]
 \end{definition}
 
Taking into account~\eqref{eq:Duhamel_formula}, clearly a function $f$ is a null-control for~\eqref{eq:abstract_control_system} if and
only if $\euler^{-TA} u_0 + \cB^T f = 0$. Thus, the system ~\eqref{eq:abstract_control_system} is null-controllable in time $T > 0$
if and only if one has the relation $\ran \cB^T \supset \ran \euler^{-TA}$, which gives an alternative definition of
null-controllability in terms of the controllability map.

\begin{remark}
 Note that if the system~\eqref{eq:abstract_control_system} is null-controllable in time $T > 0$, then, by linearity of
 $\euler^{- T A}$, it is also controllable on the range of $\euler^{- T A}$. This means that for every $u_0 \in \cH$ and every
 $u_T \in \ran \euler^{- T A}$ there is a control $f \in L^2((0,T), \cU)$ such that the solution
 of~\eqref{eq:abstract_control_system} satisfies $u(T) = u_T$.
\end{remark}

In the context of the heat equation on a compact, connected and smooth manifold with control operator $B = \chi_{S}$,
null-controllability was proved for all $T > 0$ in~\cite[Theorem~1]{LebeauR-95} and independently in~\cite{FursikovI-96}:

\begin{theorem}
 Let $\cH = \cU = L^2(\Omega)$ for a compact and connected $C^\infty$ manifold $\Omega$, $A = - \Delta$ and $B = \chi_S$ for some
 non-empty, open $S \subset\Omega$, and $T > 0$.
 Then, the system~\eqref{eq:abstract_control_system} is null-controllable in time $T$.
\end{theorem}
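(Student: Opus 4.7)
The plan is to establish the dual observability inequality via the \emph{Lebeau-Robbiano strategy}, combining a Carleman-type spectral inequality with the dissipativity of the heat semigroup. By the standard Hilbert Uniqueness Method, null-controllability of~\eqref{eq:abstract_control_system} in time $T$ is equivalent to the existence of $C_T > 0$ such that
\begin{equation*}
 \lVert \euler^{-TA}\phi_0 \rVert_{L^2(\Omega)}^2 \leq C_T \int_0^T \lVert \chi_S \euler^{-tA}\phi_0 \rVert_{L^2(\Omega)}^2 \, \drm t, \qquad \phi_0 \in L^2(\Omega).
\end{equation*}
Since $\Omega$ is compact, $A=-\Delta$ has compact resolvent and purely discrete spectrum $0 = \mu_0 \leq \mu_1 \leq \ldots \to \infty$, so every $\phi_0$ can be split according to the spectral subspaces $\ran \chi_{(-\infty,\mu]}(A)$ and their orthogonal complements.

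The analytic heart of the argument is a \emph{spectral inequality}: there exist constants $C_0, C_1 > 0$, depending on $\Omega$ and $S$, such that for every $\mu > 0$ and every $\psi \in \ran \chi_{(-\infty,\mu]}(A)$,
\begin{equation*}
 \lVert \psi \rVert_{L^2(\Omega)} \leq C_0 \euler^{C_1 \sqrt{\mu}} \lVert \psi \rVert_{L^2(S)}.
\end{equation*}
This is the manifold analogue of Corollary~\ref{cor:cubes-spectral}. In the spirit of Section~\ref{sec:Carleman} I would prove it by lifting $\psi$ to the augmented function $\Psi$ on $\Omega \times (-T,T)$ of Lemma~\ref{lemma:properties_Psi}, applying a local Carleman estimate for the resulting $(d+1)$-dimensional elliptic operator near a point of $S \times \{0\}$, and then using connectedness of $\Omega$ together with a finite chain of balls to propagate the estimate to all of $\Omega$; differentiating in time at $t=0$ recovers $\psi$.

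The spectral inequality is complemented by the elementary high-frequency decay
\begin{equation*}
 \lVert \euler^{-tA}(\Id - \chi_{(-\infty,\mu]}(A))\phi \rVert_{L^2(\Omega)} \leq \euler^{-t\mu} \lVert \phi \rVert_{L^2(\Omega)}.
\end{equation*}
With these two ingredients I construct the control iteratively: partition $(0,T)$ into pairs of active/passive subintervals of lengths $\tau_k$ summing to $T$; on each active phase solve the finite-dimensional control problem on $\ran \chi_{(-\infty,\mu_k]}(A)$ driving the low-frequency projection of the state to zero, with cost controlled by $C_0 \euler^{C_1 \sqrt{\mu_k}}$, and on the passive phase let the semigroup damp the high-frequency residual by $\euler^{-\tau_k \mu_k}$. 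Choosing $\mu_k \to \infty$ so that $\tau_k \mu_k$ dominates $C_1 \sqrt{\mu_k}$ makes the resulting series of control costs summable and yields an $f \in L^2((0,T),L^2(\Omega))$ with $u(T)=0$.

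The principal obstacle is the spectral inequality: the complex-analytic Logvinenko-Sereda tools of Section~\ref{sec:UCP_Logvinenko-Sereda} are unavailable on a generic Riemannian manifold, so one is forced into the Carleman framework of Section~\ref{sec:Carleman}, where the key technical difficulty is the \emph{global} propagation of the local Carleman smallness estimate from a neighborhood of $S$ across all of $\Omega$. Once that is secured, the Lebeau-Robbiano telescoping is a delicate but well-understood operator-theoretic construction.
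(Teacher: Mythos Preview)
Your proposal is correct and follows essentially the same route the paper takes. The paper itself does not give a self-contained proof of this particular theorem---it is quoted as the classical result of Lebeau--Robbiano and Fursikov--Imanuvilov---but it does spell out precisely the abstract Lebeau--Robbiano machinery you describe: duality reduces null-controllability to the observability inequality (Theorem~\ref{thm:observabilty_controllability}), the spectral inequality~\eqref{eq:abstract_spectral_inequality} with $C_\ur(E)=C\euler^{C\sqrt{E}}$ is taken as input, and the active/passive phase telescoping (Theorem~\ref{thm:control_active-passive} and its proof, with the choices $T_j=K2^{-j/2}$, $E_j=2^{2j}$) produces the null-control. Your sketch of how to obtain the spectral inequality on a manifold---lifting via the $\Psi$ of Lemma~\ref{lemma:properties_Psi}, a local Carleman estimate, and a chain-of-balls propagation using connectedness---is also the standard argument alluded to in Section~\ref{sec:Carleman} and attributed there to \cite{LebeauR-95,JerisonL-99}. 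In short, you have reconstructed the paper's presentation of the Lebeau--Robbiano strategy.
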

In fact, the statement in~\cite{LebeauR-95} is stronger since it allows for the control set $S$ to change in time and it states that
the null-control can be chosen smooth and with compact support.
\par
The concept of null-controllability is closely related to a second one, the so-called~\emph{final-state-observability}:
For $T > 0$ we consider the homogeneous system
\begin{equation}
 \label{eq:abstract_control_system_free}
  \begin{cases}
  \frac{\partial}{\partial t} u(t) + A u(t) &= 0,\ t\in (0,T],\\
  u(0) &= u_0 \in \cH
 \end{cases}
\end{equation}
with solution given by $u(t) = \euler^{- A t} u_0$ for $t \in [0,T]$.

\begin{definition}
 The system~\eqref{eq:abstract_control_system_free} is called \emph{final-state-observable} in time $T > 0$ if there is a constant
 $C_\obs > 0$ such that for all $u_0 \in \cH$ we have
 \begin{equation}
  \label{eq:abstract_observability_estimate}
  \lVert \euler^{- A T} u_0 \rVert_\cH^2
  \leq
  C_\obs^2
  \int_0^T
  \lVert B^\ast \euler^{- A t} u_0 \rVert_\cU^2
  \drm t
 \end{equation}
 with $B$ from~\eqref{eq:abstract_control_system}. Ineq.~\eqref{eq:abstract_observability_estimate} is
 called~\emph{observability inequality}.
\end{definition}

In~\cite[Corollary 2]{LebeauR-95}, it is noted that null-controllability of the system~\eqref{eq:abstract_control_system} leads to
final-state-observability of~\eqref{eq:abstract_control_system_free}.
In fact, it is known that the notions of null-controllability and final-state-observability are equivalent:
\begin{theorem}[{\cite{Russell78}, see also \cite[Chapter IV.2.]{Zabczyk-08}}]
\label{thm:observabilty_controllability}
 Let $T > 0$.
 The system~\eqref{eq:abstract_control_system} is null-controllable in time $T$ if and only if the
 system~\eqref{eq:abstract_control_system_free} is final-state-observable in time $T$.
\end{theorem}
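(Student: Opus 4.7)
The plan is to deduce both implications from the classical \emph{Douglas range-inclusion lemma}: for bounded operators $L_1 \colon X_1 \to Z$ and $L_2 \colon X_2 \to Z$ between Hilbert spaces, one has $\ran L_1 \subset \ran L_2$ if and only if there exists $C \geq 0$ such that $\lVert L_1^\ast z \rVert_{X_1} \leq C \lVert L_2^\ast z \rVert_{X_2}$ for every $z \in Z$. The strategy is to recast null-controllability as a range inclusion involving $\euler^{-TA}$ and $\cB^T$, and then to identify the resulting adjoint norm comparison with the observability inequality~\eqref{eq:abstract_observability_estimate}.

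First, I would invoke the Duhamel formula~\eqref{eq:Duhamel_formula} to rewrite the null-control condition $u(T) = 0$ as $\cB^T f = -\euler^{-TA} u_0$. Since the sign of $u_0$ is irrelevant, null-controllability in time $T$ is equivalent to $\ran \euler^{-TA} \subset \ran \cB^T$, as already noted after Definition~\ref{def:null_controllable}.

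Next, I would compute the adjoint $(\cB^T)^\ast \colon \cH \to L^2((0,T), \cU)$. For $f \in L^2((0,T), \cU)$ and $u_0 \in \cH$, using self-adjointness of $A$ so that $(\euler^{-tA})^\ast = \euler^{-tA}$, the identity
\[
\langle \cB^T f , u_0 \rangle_\cH = \int_0^T \langle f(s) , B^\ast \euler^{-(T-s)A} u_0 \rangle_\cU \drm s
\]
yields $((\cB^T)^\ast u_0)(s) = B^\ast \euler^{-(T-s)A} u_0$. The change of variables $t = T - s$ then gives
\[
\lVert (\cB^T)^\ast u_0 \rVert_{L^2((0,T), \cU)}^2 = \int_0^T \lVert B^\ast \euler^{-tA} u_0 \rVert_\cU^2 \drm t.
\]
Since $\euler^{-TA}$ is self-adjoint, $(\euler^{-TA})^\ast u_0 = \euler^{-TA} u_0$, and the adjoint norm comparison from the Douglas lemma applied with $L_1 = \euler^{-TA}$ and $L_2 = \cB^T$ reads, after squaring, precisely as the observability inequality~\eqref{eq:abstract_observability_estimate} with $C_\obs = C$. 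Thus both properties are equivalent.

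The principal (and in fact only) obstacle is to have the Douglas range-inclusion lemma available; once it is invoked, the computation of $(\cB^T)^\ast$ and the identification of the resulting inequality with observability are mere bookkeeping. A more self-contained but longer alternative is to prove the two implications separately: \emph{observability} gives coercivity of the functional $J(v) = \tfrac12 \int_0^T \lVert B^\ast \euler^{-tA} v \rVert_\cU^2 \drm t + \langle v , \euler^{-TA} u_0 \rangle_\cH$ on $\cH$ (or its completion with respect to the observation semi-norm), whose unique minimizer $v^\ast$ yields, via $f(t) := B^\ast \euler^{-(T-t)A} v^\ast$, a null-control (the Hilbert uniqueness method); conversely, \emph{null-controllability} furnishes, by the closed graph theorem applied to the minimal-norm control selection, a bounded linear section $u_0 \mapsto f_{u_0}$, from which the observability inequality follows by taking adjoints. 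The route through Douglas is, however, the most economical.
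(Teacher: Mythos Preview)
Your proposal is correct and follows essentially the same route as the paper: the paper states and proves the Douglas range-inclusion lemma (Lemma~\ref{lem:absop}, including the factorization statement (c)) and then applies the equivalence (a)$\Leftrightarrow$(b) with $X=\euler^{-AT}$ and $Y=\cB^T$, after observing the identity $\lVert(\cB^T)^*u_0\rVert^2=\int_0^T\lVert B^*\euler^{-At}u_0\rVert_\cU^2\,\drm t$. The only cosmetic difference is that the paper proves Douglas in full rather than citing it, and records the identity for $(\cB^T)^*$ without writing out the change of variables you spell out.
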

Theorem~\ref{thm:observabilty_controllability} is, in fact, a direct consequence of the following lemma, 
which is a well-known result going back to~\cite{Douglas-66}, see also~\cite{DoleckiR-77,CurtainP78, Lions-88}. 
The proof given here is inspired by the corresponding proofs in~\cite{Zabczyk-08} and~\cite[Proposition 12.1.2]{TucsnakW09}.
\begin{lemma}
\label{lem:absop}
  Let $\cH_1, \cH_2,\cH_3$ be Hilbert spaces, and let $X\colon \cH_1 \to \cH_3$, $Y\colon \cH_2 \to \cH_3$ be bounded operators.
  Then, the following are equivalent:
  \begin{enumerate}[(a)]
   \item $\ran X \subset \ran Y$;
   \item There is $c>0$ such that $\lVert X^* z \rVert \le c \lVert Y^* z \rVert $ for all $z\in \cH_3$.
   \item There is a bounded operator $Z\colon \cH_1 \to \cH_2$ satisfying $X=YZ$.
  \end{enumerate}
  Moreover, in this case, one has
  \begin{equation}\label{eq:absop}
   \inf\{ c\colon c\text{ as in (b)}\} = \inf\{\norm{Z} \colon Z \text{ as in (c)}\},
  \end{equation}
  and both infima are actually minima.
\end{lemma}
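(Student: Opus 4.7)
The plan is to establish the three equivalences by proving the two easy directions \textup{(c)} $\Rightarrow$ \textup{(a)} and \textup{(c)} $\Rightarrow$ \textup{(b)} directly, then constructing the factorizing operator $Z$ separately starting from \textup{(b)} and from \textup{(a)}. The equality of infima and their attainment will be read off from the quantitative bounds in these constructions. The implication \textup{(c)} $\Rightarrow$ \textup{(a)} is immediate since $\ran X = \ran(YZ) \subset \ran Y$, and \textup{(c)} $\Rightarrow$ \textup{(b)} follows by taking adjoints: $X^* = Z^* Y^*$ yields $\lVert X^* z\rVert \le \lVert Z\rVert \, \lVert Y^* z\rVert$ for every $z \in \cH_3$, so $c = \lVert Z\rVert$ works.

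The heart of the argument is \textup{(b)} $\Rightarrow$ \textup{(c)}. Given the inequality in (b) with constant $c$, I define a linear map $T_0\colon \ran Y^* \to \cH_1$ by $T_0(Y^* z) := X^* z$. The inequality first ensures that $T_0$ is well-defined, since $Y^* z_1 = Y^* z_2$ forces $\lVert X^*(z_1-z_2)\rVert \le c \lVert Y^*(z_1-z_2)\rVert = 0$, and simultaneously shows that $\lVert T_0\rVert \le c$. Extending $T_0$ by continuity to $\overline{\ran Y^*}$ and by zero on the orthogonal complement $(\overline{\ran Y^*})^\perp = \Ker Y$ produces a bounded operator $T \colon \cH_2 \to \cH_1$ satisfying $T Y^* = X^*$ and $\lVert T\rVert \le c$. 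Setting $Z := T^*$ and taking adjoints once more yields a bounded operator $Z \colon \cH_1 \to \cH_2$ with $YZ = X$ and $\lVert Z\rVert \le c$.

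For \textup{(a)} $\Rightarrow$ \textup{(c)}, given $w \in \cH_1$ the inclusion $\ran X \subset \ran Y$ provides a preimage of $Xw$ under $Y$; among all such preimages there is a unique one lying in $(\Ker Y)^\perp$, which I call $Zw$. Uniqueness yields linearity of $Z$, and boundedness is obtained from the closed graph theorem: if $w_n \to w$ and $Zw_n \to v$, then $v$ lies in the closed subspace $(\Ker Y)^\perp$ and satisfies $Yv = \lim Y(Zw_n) = \lim Xw_n = Xw$, so $v = Zw$. For \eqref{eq:absop} and the attainment claim, note that the construction in \textup{(b)} $\Rightarrow$ \textup{(c)} converts any admissible $c$ into a $Z$ with $\lVert Z\rVert \le c$, while \textup{(c)} $\Rightarrow$ \textup{(b)} converts any admissible $Z$ into the constant $c = \lVert Z\rVert$, so the two infima coincide. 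The set of admissible $c$ in \textup{(b)} is a closed half-line $[c_0, \infty)$ by its defining inequality, so the infimum $c_0$ is attained; feeding $c_0$ into the construction then yields a $Z$ with $\lVert Z\rVert = c_0$, realizing the right-hand infimum as well.

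The main technical obstacle will be the \textup{(b)} $\Rightarrow$ \textup{(c)} step: it requires defining $T_0$ on the \emph{non-closed} subspace $\ran Y^*$, invoking the Hilbert-space identity $(\overline{\ran Y^*})^\perp = \Ker Y$ to extend $T_0$ consistently to all of $\cH_2$, and tracking that the norm bound $\lVert T_0\rVert \le c$ survives both continuous extension and passage to the adjoint. The remaining steps are either immediate computations or a standard application of the closed graph theorem.
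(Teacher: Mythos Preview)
Your proof is correct and largely parallels the paper's argument: the construction in \textup{(b)}$\Rightarrow$\textup{(c)} is identical (define the map on $\ran Y^*$, extend by continuity and by zero, take the adjoint), and both proofs invoke the closed graph theorem for the implication starting from \textup{(a)}. The difference is purely organizational. The paper proves \textup{(a)}$\Rightarrow$\textup{(b)} by restricting to $\widetilde{\cH}_3 = \overline{\ran Y}$, reducing to injective $Y$, and observing that $Y^{-1}X$ is everywhere defined and closed, hence bounded; it then passes to adjoints relative to $\widetilde{\cH}_3$. You instead go directly \textup{(a)}$\Rightarrow$\textup{(c)} by selecting, for each $w$, the unique preimage of $Xw$ in $(\Ker Y)^\perp$ and applying the closed graph theorem to the resulting map. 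Your route is arguably cleaner, avoiding the auxiliary subspace and the bookkeeping of two different adjoints; the paper's route has the minor advantage that the cycle \textup{(a)}$\Rightarrow$\textup{(b)}$\Rightarrow$\textup{(c)}$\Rightarrow$\textup{(a)} closes with only one nontrivial step. The handling of \eqref{eq:absop} and the attainment of the infima is the same in both proofs.
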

\begin{proof}
 (a)$\Rightarrow$(b). First, suppose that $\Ker Y = \{ 0 \}$. Let $\tilde \cH_3 = \overline{\ran Y}$ be the Hilbert space with the
 same scalar product as in $\cH_3$. Then, we can regard $X$, $Y$ as operators with the codomains $\tilde \cH_3$ and
 $Y^{-1}\colon \tilde\cH_3\to \cH_2$ exists and is densely defined. The operator $Y^{-1}X$ is an everywhere defined closed operator,
 hence bounded by the closed graph theorem. In turn, also $(Y^{-1}X)^{*}$ is bounded. From \cite[Proposition 1.7]{Schmuedgen-12} it
 follows that $X^{(*)}Y^{-(*)} \subset (Y^{-1}X)^{*}$, where $(*)$ denotes the adjoint with respect to $\tilde \cH_3$. Hence there
 exists $c\ge 0$ such that $\lVert X^{(*)}z \rVert  \leq c \lVert Y^{(*)}z \rVert $ for all $z\in \tilde \cH_3$. But it is easy to
 see that $X^{(*)} z = X^* z$ and $Y^{(*)} z = Y^* z$ for all $z \in \overline{\ran Y}$. Finally note that if
 $z\in \left( \overline{\ran Y} \right)^{\perp} $ then $Y^* z = 0 = X^*z$ since
 $\Ker Y^* = \left( \ran Y \right)^\perp \subset \left( \ran X \right)^\perp = \Ker X^*$ by hypothesis. Hence, in this case, (b) is
 proved.

 If $\Ker Y$ is not trivial, instead of $Y$ we take $\hat Y$, the restriction of $Y$ to the space $(\Ker Y)^{\perp}$. Since
 $\ran \hat Y = \ran Y$, we can apply the first part of the proof to show $\lVert X^{*}z \rVert  \leq c \lVert \hat Y^{*}z \rVert $
 for all $z\in \cH_3$. Since $\hat Y^* z = Y^* z$ for all $z\in \cH_3$, the claim follows.

 (b)$\Rightarrow$(c). We define the operator $K\colon \ran Y^* \to \ran X^*$ by $K (Y^* z) = X^* z$ for all $z\in \cH_3$. The
 hypothesis implies that $K$ is well defined and bounded with norm less or equal to $c$. We continuously extend $K$ to
 $\overline {\ran Y^*}$ and by zero to a bounded operator on $\cH_2$. Then still $\|K\|_{\cH_2\to \cH_1} \leq c$. We obviously have
 $K Y^* = X^*$ by construction and hence also $X = Y K^*$, which implies the claim with $Z=K^*$. Since $\|Z\| = \|K\| \leq c$, this also shows that
 the right-hand side of~\eqref{eq:absop} does not exceed the left-hand side.

 (c)$\Rightarrow$(b). We clearly have
 \begin{equation*}
  \lVert X^* z \rVert = \Vert Z^* Y^* z \rVert \le \lVert Z^* \rVert \cdot \lVert Y^* z \rVert
 \end{equation*}
 for all $z\in \cH_3$, which proves the claim with $c = \lVert Z^* \rVert = \lVert Z \rVert$. This also shows that the left-hand
 side of~\eqref{eq:absop} does not exceed the right-hand side.

 (c)$\Rightarrow$(a). This is obvious.

 This concludes the proof of the equivalence of (a)--(c) and also of the identity~\eqref{eq:absop}. It remains to show that both
 minima in~\eqref{eq:absop} are actually minima. This is clear for the infimum on the left-hand side. In turn, it then follows
 from~\eqref{eq:absop} and the proof of (b)$\Rightarrow$(c) that also the infimum on the right-hand side is a minimum, which
 completes the proof.
\end{proof}%

\begin{proof}[Proof of Theorem~\ref{thm:observabilty_controllability}]
 Observe that
 \begin{equation*}
  \int_0^T \lVert B^\ast \euler^{- A t} u_0 \rVert_\cU^2 \drm t = \lVert (\cB^T)^* u_0 \rVert_{L^2((0,T), \cU)}^2.
 \end{equation*}
 The claim therefore follows from the equivalence between (a) and (b) in Lemma~\ref{lem:absop} by taking
 $X = \euler^{-AT}\colon \cH \to \cH$ and $Y = \cB^T \colon L^2((0,T), \cU) \to \cH$.
\end{proof}%
Lemma~\ref{lem:absop} actually gives much more information:
If the system \eqref{eq:abstract_control_system}
 is null-controllable, corresponding to case (a) in the Lemma, there exists according to case (c) a bounded operator $\cF\colon \cH \to L^2((0,T), \cU)$ such that
\begin{equation}
\label{eq:F-norm}
\|\cF\| = \min \left\{ c>0 \mid \forall z \in \cH : \|\euler^{-TA}z\|_{\cH} \leq c \|(\cB^T)^*z\|_{ L^2((0,T), \cU)}=\int_0^T \lVert B^\ast \euler^{- A t} z \rVert_\cU^2 \drm t \right\}
\end{equation}
and $\euler^{-TA} + \cB^T\cF=0$. 
In particular, $\cF u_0$  is a null-control in time $T$ for an initial state $u_0\in \cH$. 
Moreover, if we fix an initial datum $u_0$ and a time $T>0$, and are given one particular null-control $f_0$, the set of all null-controls 
is given by the closed affine space
  \[
   f_0 + \Ker \cB^T.
  \]
If $P$ denotes the orthogonal projection onto $\Ker \cB^T$ we have $-\euler^{-AT} = \cB^T (I-P)\cF$ and the operator $\cF^T:=(I- P) \cF$ does not depend on the choice of $\cF$.
  It follows that for every $u_0 \in \cH$, the function $\cF^T u_0 \in L^2((0,T), \cU)$ is the unique control with minimal norm associated to the initial datum $u_0$.
 
  Together with the identity~\eqref{eq:F-norm}, this justifies the following definition.

\begin{definition}\label{def:optimal-feedback}
 If the system~\eqref{eq:abstract_control_system} is null-controllable, then 
the norm of the above defined optimal operator $\cF^T \colon \cH \to L^2((0,T), \cU)$ is called \emph{control cost in time $T$}.
It satisfies
 \begin{align*} 
  C_T
  :=
  \lVert \cF^T \rVert
  &=
  \sup_{\lVert u_0 \rVert_{\cH} = 1} \min \{ \lVert f \rVert_{L^2((0,T),\cU)} \colon \euler^{-TA} u_0 + \cB^T f = 0 \}
  \\   %\label{eq:control-cost}
  &=
  \min
  \{
  C_\obs \colon \text{$C_\obs$ satisfies }\eqref{eq:abstract_observability_estimate} \}.
 \end{align*}
\end{definition}
The equivalence between final-state-observability and null-controllability can be seen as a way to reduce the study of properties of the inhomogeneous system (null-controllabili\-ty) to properties of the homogeneous system (final-state-observability).

A crucial ingredient for proving observability estimates are \emph{uncertainty relations}.
An uncertainty relation is an estimate of the form
\begin{align}
 \label{eq:abstract_spectral_inequality}
 \forall E \in \RR, u \in \cH
 \colon
 \quad
 \lVert \chi_{(- \infty, E]} (A) u \rVert_\cH^2
 \leq
 C_{\ur}(E)
 \lVert B^\ast \chi_{(- \infty, E]} (A) u \rVert_\cU^2
\end{align}
for some function $C_\ur \colon \RR \to [0, \infty)$.
As we will see below, in the context of interest to us, it is possible to prove estimates of this type with
\begin{equation}
 \label{eq:constant_in_uncertainty_relation}
 C_\ur (E)
 =
 d_0 \euler^{d_1 E_+^s}
\end{equation}
for some $s \in (0,1)$ and constants $d_0, d_1 > 0$. Recall that $t_+ = \max\{0,t\}$ for $t \in \RR$.

In the case of the pure Laplacian, such estimates can be deduced from the Logvinenko-Sereda theorem, cf.~Corollaries~\ref{cor:spectral-inequality-full-space} and~\ref{cor:cubes-spectral}.
In the case of Schr\"odinger operators, they can be proved by means of Carleman estimates as discussed in Section~\ref{sec:Carleman}.

\begin{remark}[Terminology]
In the case where $A$ is an elliptic second order differential operator (on a subset of $\RR^d$ or on a manifold) and $B$ is the indicator function of a non-empty, open subset,  Ineq.~\eqref{eq:abstract_spectral_inequality} is also referred to as a \emph{quantitative unique continuation principle}.
In the context of control theory, it is also called \emph{spectral inequality}.
\end{remark}
In~\cite{RousseauL-12}, a very transparent interplay between null-controllability, final-state-obser\-vability and spectral inequalities is used to iteratively construct a null-control
and thus establish null-controllabi\-li\-ty.
Since this approach is very instructive in nature, we are going to present their strategy in detail here.
Even though in~\cite{RousseauL-12} the special case
of the heat equation on bounded domains $\Omega$ with $B = \chi_S$ for some open $S\subset\Omega$
has been considered, we formulate their proof here in an abstract setting.
In particular, it does not require the operator $A$ to have purely discrete spectrum and thus can also be applied for the heat equation on unbounded domains, provided that a corresponding spectral inequality has been established.
\begin{theorem}
 \label{thm:control_active-passive}
 Assume that $A \geq 0$ is a self-adjoint operator and that the spectral inequality~\eqref{eq:abstract_spectral_inequality} holds for $E \geq 0$ with $C_\ur(E) = C \euler^{C \sqrt{E}}$ for some $C \geq 1$.
 Then, for every $T > 0$ the system~\eqref{eq:abstract_control_system} is null-controllable.
\end{theorem}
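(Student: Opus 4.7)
The plan is to follow the Lebeau--Robbiano telescoping construction. By Theorem~\ref{thm:observabilty_controllability}, it suffices to establish the observability estimate~\eqref{eq:abstract_observability_estimate}, but instead of proving it directly I will construct a null-control by iteration and deduce observability (or simply null-controllability) from the explicit construction. The starting point is a ``partial'' observability estimate on each spectral subspace $\ran \chi_{(-\infty,\mu]}(A)$: applying the spectral inequality to $\chi_{(-\infty,\mu]}(A)\euler^{-At}u_0$, integrating in $t\in(0,\tau)$, and using that $t\mapsto \|\euler^{-At}v\|$ is non-increasing (since $A\geq 0$), I obtain
\[
\|\euler^{-A\tau}\chi_{(-\infty,\mu]}(A)u_0\|_\cH^2
\leq \frac{C\,\euler^{C\sqrt{\mu}}}{\tau}\int_0^\tau\|B^*\euler^{-At}\chi_{(-\infty,\mu]}(A)u_0\|_\cU^2\,\drm t.
\]
By Lemma~\ref{lem:absop}, this yields, for every $\tau>0$ and $\mu>0$, a bounded operator sending an initial state $v\in\cH$ to a control $f\in L^2((0,\tau),\cU)$ such that $\chi_{(-\infty,\mu]}(A)\bigl(\euler^{-A\tau}v+\cB^\tau f\bigr)=0$, with operator norm at most $(C\euler^{C\sqrt\mu}/\tau)^{1/2}$.

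Next I will choose sequences $0=T_0<T_1<T_2<\cdots\nearrow T$ and $\mu_k\nearrow\infty$ and alternate \emph{active} and \emph{passive} phases. On the active phase $[T_{2k},T_{2k+1}]$ of length $\tau_k=T_{2k+1}-T_{2k}$, I apply the control provided by the partial observability above to erase the spectral component of the current state below $\mu_k$; at the end of this phase the state lies entirely in $\ran\chi_{(\mu_k,\infty)}(A)$. On the passive phase $[T_{2k+1},T_{2k+2}]$ of length $\tau_k'$, I take $f\equiv 0$, so the state is multiplied by $\euler^{-A\tau_k'}$ and its norm shrinks by at least $\euler^{-\mu_k\tau_k'}$. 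Concatenating these controls and extending by zero at $t=T$ produces a candidate $f\in L^2((0,T),\cU)$; the resulting state at time $T$ is the limit of the iterates, which vanishes provided the iteration converges.

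The main obstacle, as usual in Lebeau--Robbiano, is to choose $\mu_k$, $\tau_k$, $\tau_k'$ so that two competing sums are finite: the sequence of active-phase control costs $\|f_k\|_{L^2}\lesssim \euler^{C\sqrt{\mu_k}/2}\tau_k^{-1/2}\|u_{T_{2k}}\|$, and the bound on the iterated states, which satisfies roughly $\|u_{T_{2k+2}}\|\leq \euler^{-\mu_k\tau_k'}(1+M_k)\|u_{T_{2k}}\|$, where $M_k$ absorbs the cost of the active phase through $\|\cB^{\tau_k}\|\lesssim\sqrt{\tau_k}\|B\|$. A workable choice is geometric, e.g.\ $\tau_k,\tau_k'\sim \rho^k(1-\rho)T/2$ with $\rho\in(0,1)$ and $\mu_k=\kappa\, 4^k$ with $\kappa$ large enough; then the passive decay $\euler^{-\mu_k\tau_k'}$, which is exponential in $\mu_k$, crushes the active cost $\euler^{C\sqrt{\mu_k}}$, which is only exponential in $\sqrt{\mu_k}$. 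Summability of $\sum\|f_k\|_{L^2}^2$ and geometric decay of $\|u_{T_{2k}}\|$ follow, showing that the constructed $f$ is a genuine null-control in $L^2((0,T),\cU)$. It is precisely the sub-linear exponent $s=\tfrac12<1$ in~\eqref{eq:constant_in_uncertainty_relation} that makes this balance possible; with $s\geq 1$ the scheme would break down.
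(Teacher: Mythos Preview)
Your proposal is correct and follows essentially the same Lebeau--Robbiano active/passive-phase construction as the paper's proof (which is itself presented as an exposition of~\cite{RousseauL-12}): both derive the partial observability on low-energy subspaces (the paper packages this as Lemma~\ref{lem:control_truncated_system}), alternate control phases that annihilate the low-energy component with free-decay phases that damp the high-energy remainder, and then verify that the concatenated control lies in $L^2((0,T),\cU)$ via the same balancing of $\euler^{C\sqrt{\mu_k}}$ against $\euler^{-\mu_k\tau_k'}$. The paper fixes the specific choices $T_j=K2^{-j/2}$ and $E_j=2^{2j}$ (i.e.\ your $\rho=2^{-1/2}$, $\mu_k=4^k$), whereas you leave $\rho\in(1/2,1)$ and $\kappa$ as free parameters, but this is only a cosmetic difference.
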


 The main idea in the proof of Theorem~\ref{thm:control_active-passive} in~\cite{RousseauL-12} are so-called \emph{active and passive phases}.
 For that purpose, the time interval is decomposed $[0,T] = \cup_{j \in \NN_0} [a_j, a_{j+1}]$ where $a_0 = 0$, $a_{j+1} = a_j + 2 T_j$ for $T_j > 0$ to be specified in the proof, and with $\lim_{j \to \infty} a_j = T$.
 The subintervals $[a_j, a_j + T_j]$ are called \emph{active phases} and the subintervals $[a_j + T_j, a_{j+1}]$ \emph{passive phases}.
 The idea is now to choose a sequence $(E_j)_{j \in \NN_0}$, tending to infinity and to split for every $j \in \NN_0$ the system according to $\cH = \ran \chi_{(- \infty, E_j]}(A) \oplus \ran \chi_{(E_j, \infty)}(A)$ into a \emph{low energy} and a \emph{high energy part}.
 In every active phase $[a_j, a_j + T_j]$, one then deduces final-state-observability of the low energy part $\ran \chi_{(- \infty, E_j]}(A)$ and thus finds a control in this time interval such that at time $a_j + T_j$, the solution will be in $\ran \chi_{(E_j, \infty)}(A)$, i.e.\ it will be in the high energy part of the state space.
 Then, in the passive phase, no control will be applied and by contractivity of the semigroup $\euler^{- A t}$, the solution will decay proportional to $\euler^{- T_j E_j}$.
 Repeating this procedure, we will see that with appropriate choices of the $T_j$ and the $E_j$, the solution tends to zero as $j \to \infty$, i.e.\ as $t \to T$.

 In order to make these ideas more precise, the following energy-truncated control system is introduced:
\begin{equation}
 \label{eq:abstract_control_system_truncated}
 \begin{cases}
   \frac{\partial}{\partial t} v(t) + A v(t) &= \chi_{(- \infty, E]}(A) B f(t),\\
   v(0) &= v_0 \in \ran \chi_{(- \infty, E]}(A).
 \end{cases}
\end{equation}

\begin{lemma}
 \label{lem:control_truncated_system}
 Let $\cT > 0$ and assume that the spectral inequality~\eqref{eq:abstract_spectral_inequality} holds for all $E \geq 0$.
 Then for every $E \geq 0$, the system~\eqref{eq:abstract_control_system_truncated} is null-controllable in time $\cT$ with cost $C_\cT$ satisfying $C_\cT^2 = C_\ur(E)/\cT$.
\end{lemma}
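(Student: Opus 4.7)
The plan is to invoke Theorem~\ref{thm:observabilty_controllability} to reduce null-controllability of the truncated system~\eqref{eq:abstract_control_system_truncated} to the corresponding observability estimate for the homogeneous equation on the state space $\ran \chi_{(-\infty, E]}(A)$. Since the adjoint of the truncated control operator $\chi_{(-\infty, E]}(A)B$ equals $B^* \chi_{(-\infty, E]}(A)$, what needs to be shown is
\[
 \lVert \euler^{-\cT A} v_0 \rVert_\cH^2
 \leq
 \frac{C_\ur(E)}{\cT}
 \int_0^\cT \lVert B^* \chi_{(-\infty, E]}(A) \euler^{-t A} v_0 \rVert_\cU^2 \drm t
\]
for every $v_0 \in \ran \chi_{(-\infty, E]}(A)$, from which the claimed bound $C_\cT^2 = C_\ur(E)/\cT$ on the control cost follows via Definition~\ref{def:optimal-feedback}.

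The first observation is that, by the functional calculus, the spectral subspace $\ran \chi_{(-\infty,E]}(A)$ is invariant under the semigroup $(\euler^{-tA})_{t \geq 0}$, so that $\euler^{-tA} v_0 \in \ran \chi_{(-\infty,E]}(A)$, i.e.\ $\chi_{(-\infty,E]}(A) \euler^{-tA} v_0 = \euler^{-tA} v_0$, for every $t \geq 0$. Applying the uncertainty relation~\eqref{eq:abstract_spectral_inequality} with $u = \euler^{-tA} v_0$ therefore yields the pointwise bound
\[
 \lVert \euler^{-tA} v_0 \rVert_\cH^2
 \leq
 C_\ur(E) \lVert B^* \chi_{(-\infty,E]}(A) \euler^{-tA} v_0 \rVert_\cU^2
 \quad \text{for a.e.\ } t \in [0,\cT].
\]

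The second ingredient is contractivity: since $A \geq 0$ (as assumed in the ambient Theorem~\ref{thm:control_active-passive}, from which this lemma draws its hypotheses), the function $t \mapsto \lVert \euler^{-tA} v_0 \rVert_\cH^2$ is non-increasing, so that $\lVert \euler^{-\cT A} v_0 \rVert_\cH^2 \leq \lVert \euler^{-tA} v_0 \rVert_\cH^2$ for all $t \in [0,\cT]$. Integrating the pointwise bound over $[0,\cT]$ and using this monotonicity on the left-hand side yields the desired observability estimate, and Theorem~\ref{thm:observabilty_controllability} together with Definition~\ref{def:optimal-feedback} converts it into the announced null-controllability statement with cost at most $\sqrt{C_\ur(E)/\cT}$.

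There is essentially no hard step here: the uncertainty relation is being used only on a single spectral subspace, and no iteration or smallness argument is required. The only subtlety worth flagging is that the spectral subspace must be semigroup-invariant (handled by functional calculus) and that the non-negativity of $A$ is what allows the trivial $L^\infty$-to-$L^1$ estimate in time; without lower semi-boundedness one would pick up an additional exponential factor depending on the spectral lower bound of $A$ and on $\cT$.
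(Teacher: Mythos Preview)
Your proof is correct and follows essentially the same route as the paper: reduce to final-state-observability on the invariant subspace $\ran\chi_{(-\infty,E]}(A)$ via Theorem~\ref{thm:observabilty_controllability}, apply the spectral inequality~\eqref{eq:abstract_spectral_inequality} pointwise in $t$ using semigroup invariance of the spectral subspace, and use contractivity of $(\euler^{-tA})_{t\ge 0}$ to pass from $\lVert \euler^{-\cT A}v_0\rVert_\cH^2$ to its time average. Your extra remark identifying the adjoint of the truncated control operator as $B^*\chi_{(-\infty,E]}(A)$ is a helpful clarification the paper leaves implicit.
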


\begin{proof}
 It suffices to see that the system
 \[
  \begin{cases}
   \frac{\partial}{\partial t} v(t) + A v(t) &= 0,\ t>0\\
   v(0) &= v_0 \in \ran \chi_{(- \infty, E]}(A),\\
  \end{cases}
 \]
 considered as a system on the Hilbert space $\ran \chi_{(- \infty, E]}(A)$, is final-state-observable in time $\cT$.
 For that purpose, we calculate, using spectral calculus and in particular the fact that $\euler^{A t}$ leaves $\ran \chi_{(- \infty, E]}(A)$ invariant, and~\eqref{eq:abstract_spectral_inequality} that
 \begin{align*}
  \cT \lVert \euler^{-A \cT} v_0 \rVert_\cH^2
  &\leq
  \int_0^\cT \lVert \euler^{-A t} v_0 \rVert_\cH^2 \drm t
  \leq
 C_\ur(E)
  \int_0^\cT \lVert B^\ast \euler^{-A t} v_0 \rVert_\cU^2 \drm t.
  \qedhere
 \end{align*}
\end{proof}

\begin{proof}[Proof of Theorem~\ref{thm:control_active-passive}]
 Following~\cite[Section 6.2]{RousseauL-12}, we split the time interval $[0,T] = \cup_{j \in \NN_0} [a_j, a_{j+1}]$ with $a_0 = 0$, $a_{j+1} = a_j + 2 T_j$, and $T_j = K 2^{-j /2}$ for a constant $K$ defined by the relation $2 \sum_{j = 0}^\infty T_j = T$.
 Furthermore, we choose $E_j = 2^{2 j}$.

 Our aim is to choose in every active phase $[a_j, a_j + T_j]$ an appropriate null-control $f_j \in L^2([a_j, a_j + T_j], \cU)$ such that $u(a_j + T_j) \in \ran \chi_{(E_j, \infty)}(A)$.

 Therefore, let $j \in \NN_0$ and $u(a_j) \in \cH$ be given.
 In the active phase $[a_j, a_j + T_j]$, we apply Lemma~\ref{lem:control_truncated_system} with $v_0 = \chi_{(- \infty, E_j]}(A) u(a_j)$, and $\cT = T_j$.
 This yields a function $f_j \in L^2([a_j, a_j + T_j], \cU)$ with
 \[
  \int_{a_j}^{a_j + T_j} \lVert f_j(t) \rVert_\cU^2 \drm t
  \leq
  \frac{C \euler^{C 2^j}}{T_j}
  \lVert u(a_j) \rVert_\cH^2,
 \]
 such that the solution of the system
 \[
 \begin{cases}
  \frac{\partial}{\partial t} v(t) + A v(t) &= \chi_{(- \infty, E_j]}(A) B f_j(t),
  \quad
  t\in(a_j, a_j + T_j),\\
  v(a_j)
  &=
  \chi_{(- \infty, E_j]}(A) u(a_j)
  \in \ran \chi_{(- \infty, E_j]}(A),\\
 \end{cases}
 \]
 satisfies $v(a_j + T_j) = 0$. Since the spectral projectors of $A$ commute with $\euler^{- t A}$, with this control function $f_j$
 in $(a_j,a_j+T_j]$ we then have $\chi_{(-\infty,E_j]}(A)u(a_j+T_j)=0$ and
 \begin{align}
  u(a_j + T_j)
  &=
  \chi_{(E_j, \infty)}(A) u(a_j + T_j)
  \nonumber
  \\
  &=
  \euler^{- T_j A} \chi_{(E_j, \infty)}(A) u(a_j)
  +
  \int_{a_j}^{a_j + T_j} \euler^{- (a_j + T_j - t) A} \chi_{(E_j, \infty)}(A) B f_j(t) \drm t.
  \label{eq:Duhamel_after_splitting}
 \end{align}
 We use the notation $F(t) := \euler^{- (a_j + T_j - t) A} \chi_{(E_j, \infty)}(A) B f_j(t)$ and estimate
 \begin{align*}
  \Bigl\lVert
    \int_{a_j}^{a_j + T_j}
    &\euler^{- (a_j + T_j - t) A}
    \chi_{(E_j, \infty)}(A) B f_j(t) \drm t
  \Bigr\rVert_\cH^2
  \leq
    \int_{a_j}^{a_j + T_j}
    \int_{a_j}^{a_j + T_j}
  \lVert
    F(t)
  \rVert_\cH
  \cdot
  \lVert
   F(s)
  \rVert_\cH
  \drm t
  \drm s
  \\
  &\leq
  \frac{1}{2}
    \int_{a_j}^{a_j + T_j}
    \int_{a_j}^{a_j + T_j}
  \lVert
    F(t)
  \rVert_\cH^2
  \drm t
  \drm s
  +
  \frac{1}{2}
  \int_{a_j}^{a_j + T_j}
  \int_{a_j}^{a_j + T_j}
  \lVert
   F(s)
  \rVert_\cH^2
  \drm t
  \drm s
  \\
  &=
  \int_{a_j}^{a_j + T_j}
  \int_{a_j}^{a_j + T_j}
  \lVert
    F(t)
  \rVert_\cH^2
  \drm t
  \drm s
  \leq
  T_j \lVert B \rVert^2 \frac{C \euler^{C 2^j}}{T_j} \lVert u(a_j) \rVert_\cH^2.
 \end{align*}
 Hence, we obtain from~\eqref{eq:Duhamel_after_splitting} and using that $C \geq 1$
 \begin{align*}
  \lVert u(a_j + T_j) \rVert_\cH
  &\leq
  \left(
  1
  +
  \lVert B \rVert \sqrt{C} \euler^{(C/2) 2^j}
  \right)
  \lVert u(a_j) \rVert_\cH
  \\
  &\leq
  \euler^{(2 + \lVert B \rVert) C 2^j}
  \lVert u(a_j) \rVert_\cH
  =:
  \euler^{D 2^j} \lVert u(a_j) \rVert_\cH.
 \end{align*}
 Now, using $u(a_j + T_j) \in \ran \chi_{(E_j, \infty)}(A) = \ran \chi_{(2^{2j}, \infty)}(A)$ and recalling that $T_j = K 2^{-j/2}$, we find
 \begin{align*}
 \lVert u(a_{j+1}) \rVert_\cH
 &\leq
 \euler^{- 2^{2j} T_j} \lVert u(a_j + T_j) \rVert_\cH\\
 &\leq
 \euler^{D 2^j - K 2^{3 j /2 } } \lVert u (a_j) \rVert_\cH.
 \end{align*}
 Inductively, this yields
 \[
  \lVert u(a_{j+1}) \rVert_\cH
  \leq
  \exp \left( \sum_{k = 0}^j D 2^k - K 2^{3 k/2} \right) \lVert u(0) \rVert_\cH.
 \]
 Thus, $\lim_{j \to \infty} \lVert u(a_j) \rVert_\cH^2 = 0$.
 It remains to show that the function $f \colon [0, T] \to \cU$, defined by
 \[
  f(t)
  :=
  \begin{cases}
   f_j(t) & \text{if $t \in [a_j, a_j + T_j]$,}\\
   0    & \text{else}
  \end{cases}
 \]
 is in $L^2((0,T), \cU)$.
 For that purpose, we calculate
 \begin{align}
  \lVert f \rVert_{L^2((0,T), \cU)}^2
  &=
  \sum_{j = 0}^\infty
  \int_{a_j}^{a_j + T_j} \lVert f_j(t) \rVert_\cH^2 \drm t
  \leq
  \sum_{j = 0}^\infty
  \frac{C \euler^{C  2^j}}{T_j} \lVert u(a_j) \rVert_\cH^2
  \nonumber
  \\
  &\leq
  \left(
  \frac{C \euler^{2 C}}{T_0}
  +
  \sum_{j = 1}^\infty
  \frac{C \euler^{C 2^j}}{T_j}
  \exp \left( \sum_{k = 0}^{j - 1} 2 D 2^k - 2 K 2^{3 k/2} \right)
  \right)
  \lVert u(0) \rVert_\cH^2
  \nonumber
  \\
  &=
  \left(
  \frac{C \euler^{2 C}}{T_0}
  +
  \sum_{j = 1}^\infty
  \frac{C}{K}
  \exp \left( C 2^j + \frac{\ln(2) j}{2} + \sum_{k = 0}^{j - 1} 2 D 2^k - 2 K 2^{3 k/2} \right)
  \right)
  \lVert u(0) \rVert_\cH^2,
  \label{eq:series_norm_control_function_LebeauLeRousseau}
 \end{align}
 and since there are $\tilde C_1, \tilde C_2 > 0$ such that
 \begin{align*}
  C 2^j
  + \frac{\ln(2) j}{2}
  + \sum_{k = 0}^{j - 1} 2 D 2^k - 2 K 2^{3 k/2}
  &=
  C 2^j + \frac{\ln(2) j}{2}
  +
  2 D \frac{2^j - 1}{2-1}
  -
  2 K
  \frac{2^{3j/2} - 1}{2^{3/2} - 1}
  \\
  &\leq
  \left( C + \frac{\ln(2)}{2} + 2 D + \frac{2K}{2^{3/2}-1}\right)
  2^j
  -
  \left( \frac{2 K}{2^{3/2}} \right)
  2^{3j/2}
  \\
  &\leq
  \tilde C_1 - \tilde C_2 2^j
  \quad
  \text{for all $j \in \NN$},
 \end{align*}
 the series in~\eqref{eq:series_norm_control_function_LebeauLeRousseau} converges.
 This concludes the proof.
\end{proof}

We have now seen how a spectral inequality leads to null-controllability.
While being very constructive in nature, the above method makes it challenging to keep track of the estimate on the control cost, that is, on the norm of the null-control $f$, in terms of model parameters.
Even trying to understand its $T$-dependence is difficult.
This becomes even more involved if we endow the spectral inequality with more constants, e.g.\ by choosing $C_\ur(E) = d_0 \euler^{d_1 \sqrt{E}}$, and attempt to also understand the dependence of the control cost in terms of $d_0$ and $d_1$.

However, there exist other works which have derived more explicit bounds on the control cost.
There, usually an observability estimate for the whole system is proved without going through the active-passive-phases construction.
The first work we cite here is~\cite{Miller-10}, where ideas of~\cite{LebeauR-95} have been streamlined and generalized to a more abstract situation.
In fact, Miller considered a situation where the operator $A$ is no longer self-adjoint, but merely the generator of a strongly continuous semigroup.
Due to the lack of spectral calculus, an additional assumption on contractivity of the semigroup on certain invariant subspaces \eqref{eq:contractivity} is required and serves as a replacement for the strict contractivity of the semigroup on high energy spectral subspaces.
Furthermore, the situation is treated where the spectral inequality holds for an additional reference operator $B_0$ which is in some relation to the actual control operator $B$ (actually, it will be the identity operator in our applications below).
\begin{theorem}[{\cite[Theorem~2.2]{Miller-10}}]
 \label{thm:Miller}
 Let a (not necessarily self-adjoint) operator $-A$ in $\cH$ be the generator of a
 strongly continuous semigroup $\{ \euler^{-t A} \colon t \geq 0 \}$.
 Assume that there is a family $\cH_\lambda \subset \cH$, $\lambda > 0$, of semigroup invariant subspaces such that for some $\nu \in (0,1)$, $m \geq 0$, $m_0 \geq 0$, and $T_0 > 0$ we have
 \begin{equation}
 \label{eq:contractivity}
  \forall
  \lambda > 0,
  x \in \cH_\lambda^\perp,
  t \in (0, T_0),
  \quad
  \lVert \euler^{-tA} x \rVert_\cH
  \leq
  m_0
  \euler^{m \lambda^\nu}
  \euler^{- \lambda t}
  \lVert x \rVert_\cH
  .
 \end{equation}
 Let $B_0$ be an operator, mapping from $\cD(A)$ to $\cU$, satisfying
 \begin{equation}
 \label{eq:spec_ineq_miller}
  \forall x \in \cH_\lambda,
  \lambda > 0,
  \quad
  \lVert B_0 x \rVert_{\cH}^2
  \leq
  a_0 \euler^{2 a \lambda^\alpha}
  \lVert B x \rVert_{\cH}^2
 \end{equation}
 for some $a_0, a, \alpha > 0$.
 Assume that there are $b_0$, $\beta$, $b > 0$ such that
 \begin{equation}
 \label{eq:obs_ineq_B0}
  \forall x \in \cD(A),
  T \in (0, T_0),
 \quad
  \lVert \euler^{-T A} x \rVert_{\cH}^2
  \leq
  b_0 \euler^{\frac{2 b }{T^\beta}}
  \int_0^T
  \lVert B_0 \euler^{-tA} x \rVert_{\cH}^2 \drm t.
 \end{equation}
 Assume that we can choose $\beta = \frac{\alpha}{1 - \alpha} = \frac{\nu}{1 - \nu}$.

 Then, for all $T > 0$, we have the observability estimate
 \[
  \lVert \euler^{-T A } x \rVert_{\cH}^2
  \leq
  \kappa_T
  \int_0^T \lVert B \euler^{-tA} x \rVert_{\cH}^2
  \drm t,
  \quad
  \forall x \in \cD(A)
 \]
 where $\kappa_T$ satisfies $2 c = \limsup_{T \to 0} T^\beta \ln \kappa_T < \infty$ with the constant $c$ satisfying
 \[
  c
  \leq
  c_\ast
  =
  \left( \frac{(\beta + 1) b}{a + m} \right)^{\frac{\beta + 1}{\beta}}
  \frac{\beta^\beta}{s^{\frac{(\beta + 1)^2}{\beta}}}
 \]
 with
 \[
  s(s + \beta + 1)^\beta
  =
  (\beta + 1) \beta^{\frac{\beta^2}{\beta + 1}}
  \frac{b^{\frac{1}{\beta + 1}}}{a + m}.
 \]
 Moreover, if we have
 \[
  \forall
  x \in \cD(A),
  T > 0 ,
  \quad
  \int_0^T
  \lVert B \euler^{-t A} \rVert_{\cH}^2 \drm t
  \leq
  \adm_T \lVert x \rVert_{\cH}^2
 \]
 with a constant $\adm_T$ satisfying $\lim_{T \to 0} \adm_T = 0$, then there exists $T' > 0$ such that for all $T \in (0, T']$, we have
 \[
  \kappa_T
  \leq
  4 a_0 b_0
  \exp \left( \frac{2 c_\ast}{T^\beta} \right).
 \]
\end{theorem}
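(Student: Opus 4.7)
The plan is to adapt the active/passive phases iteration behind Theorem~\ref{thm:control_active-passive} to accommodate the extra flexibility provided by the auxiliary operator $B_0$. The spectral inequality~\eqref{eq:spec_ineq_miller} bridges $B_0$ and $B$ on the low-frequency subspaces $\cH_\lambda$, while the contractivity~\eqref{eq:contractivity} gives strict decay on the complementary $\cH_\lambda^\perp$; the role of~\eqref{eq:obs_ineq_B0} is then to inject a quantitative $B_0$-observability on each $\cH_\lambda$-slice, which~\eqref{eq:spec_ineq_miller} converts into a $B$-observability at the price of a factor $\euler^{2a\lambda^\alpha}$.

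First I would derive a one-step estimate: for every $x \in \cD(A)$, every $\lambda > 0$, and every $\tau \in (0,T_0)$, an inequality of schematic form
\begin{equation*}
\lVert \euler^{-\tau A} x \rVert_\cH^2 \leq C_1\, \euler^{2a\lambda^\alpha + 2b/\tau^\beta} \int_0^\tau \lVert B \euler^{-tA} x \rVert_\cU^2 \drm t + C_2\, \euler^{2m\lambda^\nu - 2\lambda\tau} \lVert x \rVert_\cH^2.
\end{equation*}
To obtain this, split $x = x_\lambda + x_\lambda^\perp$ with $x_\lambda \in \cH_\lambda$ (using semigroup invariance so that $\euler^{-tA}$ preserves the splitting), apply~\eqref{eq:obs_ineq_B0} to $x_\lambda$, substitute~\eqref{eq:spec_ineq_miller} to replace $\lVert B_0\euler^{-tA} x_\lambda\rVert_\cU$ by a multiple of $\lVert B\euler^{-tA} x_\lambda\rVert_\cU$, and control the orthogonal part via~\eqref{eq:contractivity}, using $\lVert B\euler^{-tA} x_\lambda\rVert_\cU \leq \lVert B\euler^{-tA} x\rVert_\cU + \lVert B\rVert \lVert\euler^{-tA} x_\lambda^\perp\rVert_\cH$.

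Next comes the telescoping. Decompose $[0,T] = \bigcup_{j=0}^\infty [a_j,a_{j+1}]$ with slice lengths $\tau_j := a_{j+1} - a_j \to 0$ and truncation levels $\lambda_j \to \infty$, chosen so that the residual factor $\euler^{2m\lambda_j^\nu - 2\lambda_j\tau_j}$ produced in step $j$ is dominated by the $B$-observability cost in step $j+1$, making the accumulated high-energy contributions form a convergent series. The compatibility hypothesis $\beta = \alpha/(1-\alpha) = \nu/(1-\nu)$ is precisely the identity $\lambda\tau \sim \lambda^\alpha \sim \tau^{-\beta}$, so a geometric choice of $\lambda_j$ and $\tau_j$ makes all three exponents scale together. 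Iterating the one-step estimate along this decomposition collapses the bound into a single observability inequality $\lVert\euler^{-TA} x\rVert_\cH^2 \leq \kappa_T \int_0^T \lVert B\euler^{-tA} x\rVert_\cU^2 \drm t$.

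The main obstacle will be extracting the sharp asymptotic constant $c_\ast$. After rescaling, each subinterval contributes a Lagrangian of the form $a\lambda^\alpha + b/\tau^\beta - \lambda\tau$, and the optimization over $(\lambda,\tau)$ under the constraint forced by telescoping yields exactly the auxiliary equation $s(s+\beta+1)^\beta = (\beta+1)\beta^{\beta^2/(\beta+1)} b^{1/(\beta+1)}/(a+m)$ appearing in the statement; the substitution of $a+m$ rather than $a$ encodes the subleading $\lambda^\nu$ correction coming from~\eqref{eq:contractivity}. This book-keeping is the delicate part and yields the bound $\limsup_{T\to 0} T^\beta \log \kappa_T \leq 2c_\ast$. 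Finally, the admissibility hypothesis $\lim_{T\to 0}\adm_T = 0$ allows one to absorb the short-time tail of $\int_0^\tau \lVert B\euler^{-tA}\cdot\rVert_\cU^2 \drm t$ into a vanishing perturbation, collapsing the entire iteration into the clean prefactor $4 a_0 b_0$ and thereby upgrading the asymptotic $\limsup$ to the uniform bound $\kappa_T \leq 4 a_0 b_0 \exp(2c_\ast/T^\beta)$ valid on a neighborhood of $0$.
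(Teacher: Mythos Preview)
The paper does not prove this theorem: it is quoted verbatim from \cite[Theorem~2.2]{Miller-10} and used as a black box, so there is no in-paper proof to compare your proposal against. Your outline is a reasonable sketch of Miller's strategy---the Lebeau--Robbiano telescoping refined to track the constant $c_\ast$---and is consistent with the active/passive-phase argument the paper \emph{does} spell out for the simpler Theorem~\ref{thm:control_active-passive}.

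One caveat on your one-step estimate: you assume the splitting $x = x_\lambda + x_\lambda^\perp$ is preserved by the semigroup, but the hypothesis only asserts that the $\cH_\lambda$ are semigroup invariant, not that the $\cH_\lambda^\perp$ are (and $A$ is not assumed self-adjoint). Miller's actual iteration avoids this by working directly with the full state and using \eqref{eq:contractivity} only as an a posteriori decay bound on the component in $\cH_\lambda^\perp$ after each step, rather than propagating two decoupled evolutions. Your schematic inequality is still attainable, but the derivation needs to be organized so that invariance of $\cH_\lambda^\perp$ is never invoked.
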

In particular, the control cost $\kappa_T$ is estimated only for sufficiently small times.

One can apply Theorem \ref{thm:Miller} in various ways.
For instance, it is possible to choose $B_0 = I$, in which case \eqref{eq:obs_ineq_B0} is obviously satisfied for small times and \eqref{eq:spec_ineq_miller} becomes a spectral inequality. Depending on the system, the latter can be challenging to establish or not.
Alternatively, one might be able to prove \eqref{eq:obs_ineq_B0} for a convenient operator $B_0$ for which \eqref{eq:spec_ineq_miller} is easier to establish.

In the case of the system \eqref{eq:abstract_control_system}, Theorem \ref{thm:Miller} simplifies to the following result.
\begin{corollary}
Let $A\geq 0$ be a self-adjoint operator in a Hilbert space $\cH$ and $B \in \cL(\cU, \cH)$.
Then, \eqref{eq:contractivity} holds with  $m_0=1$ and $ m=0$.
Let \eqref{eq:abstract_spectral_inequality} be satisfied with $C_{\ur}(\lambda) = a_0 \euler^{2 a \lambda^\alpha}$
(i.e.\ \eqref{eq:spec_ineq_miller} is valid for $B_0=I$ and $\cH_\lambda$ being spectral subspaces of $A$ corresponding to the interval $(- \infty, \lambda]$).
Then \eqref{eq:obs_ineq_B0} is satisfied for any choice of $b, b_0 > 0$, provided $T_0$ is small enough.
Consequently the conclusions of Theorem \ref{thm:Miller} hold true.
\end{corollary}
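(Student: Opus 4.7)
The strategy is to verify, one by one, the hypotheses of Theorem~\ref{thm:Miller} under the choices $B_0 = I$ and $\cH_\lambda = \ran \chi_{(-\infty,\lambda]}(A)$, thereby identifying Miller's observation operator with our adjoint $B^\ast$.

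First I would establish the contractivity assumption~\eqref{eq:contractivity}. Since $A$ is self-adjoint and non-negative, the spectral projectors commute with $\euler^{-tA}$, and hence the spaces $\cH_\lambda$ are semigroup invariant. Denoting by $\{E_s\}_{s \geq 0}$ the spectral resolution of $A$, for any $x \in \cH_\lambda^\perp = \ran \chi_{(\lambda,\infty)}(A)$ the spectral theorem gives
\[
\norm{\euler^{-tA} x}_{\cH}^2 = \int_\lambda^\infty \euler^{-2ts}\,\drm \norm{E_s x}_{\cH}^2 \leq \euler^{-2\lambda t}\,\norm{x}_{\cH}^2,
\]
which yields~\eqref{eq:contractivity} with $m_0 = 1$, $m = 0$, and $\nu$ free (since the prefactor $\euler^{m\lambda^\nu}$ is identically $1$).

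Next I would check the remaining two Miller hypotheses. The spectral inequality~\eqref{eq:spec_ineq_miller} with $B_0 = I$ reduces to~\eqref{eq:abstract_spectral_inequality} restricted to $x \in \cH_\lambda$, which holds by assumption with exponent $\alpha$ coming from $C_\ur(\lambda) = a_0 \euler^{2a\lambda^\alpha}$. For the short-time observability~\eqref{eq:obs_ineq_B0} with $B_0 = I$, I exploit that $t \mapsto \norm{\euler^{-tA}x}_{\cH}^2$ is non-increasing (as $A \geq 0$), giving
\[
T\,\norm{\euler^{-TA}x}_{\cH}^2 \leq \int_0^T \norm{\euler^{-tA}x}_{\cH}^2\,\drm t.
\]
Hence~\eqref{eq:obs_ineq_B0} follows from the elementary comparison $1/T \leq b_0 \euler^{2b/T^\beta}$, which holds for any prescribed $b_0, b > 0$ once $T_0$ is chosen small enough, since exponential growth in $T^{-\beta}$ dominates any polynomial in $T^{-1}$.

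Finally, I match parameters. The compatibility $\beta = \alpha/(1-\alpha) = \nu/(1-\nu)$ required by Miller is secured by setting $\nu = \alpha$, which is permissible because $m = 0$ renders $\nu$ inoperative in~\eqref{eq:contractivity}. Boundedness of $B \in \cL(\cU,\cH)$ further yields the admissibility $\int_0^T \norm{B^\ast \euler^{-tA} x}_{\cU}^2\,\drm t \leq T\,\norm{B}^2\,\norm{x}_{\cH}^2$ with $\lim_{T\to 0} T\,\norm{B}^2 = 0$, so that both parts of Theorem~\ref{thm:Miller} become applicable. I do not anticipate a genuine obstacle: the argument is essentially a hypothesis check, with the only mildly subtle point being the observation that the trivial contractivity bound on $\cH_\lambda^\perp$ produces $m = 0$, which is precisely what frees up $\nu$ and makes the parameter matching automatic.
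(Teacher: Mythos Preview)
Your proof is correct and complete. The paper states this corollary without proof, treating it as an immediate specialization of Theorem~\ref{thm:Miller}; your verification of the hypotheses one by one via the spectral theorem, monotonicity of $t\mapsto\lVert \euler^{-tA}x\rVert$, and the elementary bound $1/T\le b_0\euler^{2b/T^\beta}$ for small $T$ is exactly the intended argument, including the observation that $m=0$ leaves $\nu$ free so that the compatibility $\nu=\alpha$ can be enforced.
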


In the particular case where the spectral inequality~\eqref{eq:abstract_spectral_inequality} with $C_\ur(E)$ as in~\eqref{eq:constant_in_uncertainty_relation} and $s = 1/2$ holds, the result of~\cite{Miller-10} implies that the system~\eqref{eq:abstract_control_system_free} is final-state-observable in sufficiently small time $T$.
Thus the system~\eqref{eq:abstract_control_system} is null-controllable in time $T$ with cost satisfying
\begin{equation*}
 C_T
 \leq
 d_0 \exp \left( \frac{c_\ast}{T} \right)
 ,
 \quad
 0 < T \leq T'
\end{equation*}
for some $T', c_\ast > 0$, depending in an implicit manner on $d_0$ and $d_1$.
We emphasize that this result provides estimates on the control cost only for small times $0 < T \leq T'$, where $T'$ also depends in an implicit way on the model parameters.

In~\cite[Theorem 2.1]{BeauchardPS-18}, Beauchard, Pravda-Starov and Miller removed this restriction to small times in the specific situation where $\cH = L^2(\Omega)$ and $B = \chi_S$ for $S \subset \Omega \subset \RR^d$.
\begin{theorem}[{\cite[Theorem~2.1]{BeauchardPS-18}}]
\label{thm:BPSM}
 Let $\Omega$ be an open subset of $\RR^d$, $S$ be an open subset of $\Omega$, $\{ \pi_k \colon k \in \NN \}$ be a family of orthogonal projections on $L^2(\Omega)$, $\{ \euler^{-t A} \colon t \geq 0 \}$ be a contraction semigroup on $L^2(\Omega)$, $c_1$, $c_2$, $a$, $b$, $t_0$, $m >0$ be positive constants with $a < b$.
 If the spectral inequality
 \[
  \forall g \in L^2(\Omega),
  \forall k \geq 1,
  \quad
  \lVert \pi_k g \rVert_{L^2(\Omega)}
  \leq
  \euler^{c_1 k^a}
  \lVert \pi_k g \rVert_{L^2(S)},
 \]
 and the dissipation estimate
 \[
  \forall g \in L^2(\Omega),
  \forall k \geq 1,
  \forall 0 < t < t_0,
  \quad
  \lVert
  (1 - \pi_k)
  (\euler^{-T A} g)
  \rVert_{L^2(\Omega)}
  \leq
  \frac{1}{c_2}
  \euler^{- c_2 t^m k^b}
  \lVert g \rVert_{L^2(\Omega)}
 \]
 hold, then there exists a positive constant $C > 1$ such that the following observability estimate holds
 \[
  \forall T > 0,
  \forall g \in L^2(\Omega),
  \quad
  \lVert \euler^{-T A} \rVert_{L^2(\Omega)}^2
  \leq
  C
  \exp
  \left(
   \frac{C}{T^{\frac{am}{b-a}}}
  \right)
  \int_0^T
  \lVert \euler^{-tA} g \rVert_{L^2(S)}^2
  \drm t.
 \]
\end{theorem}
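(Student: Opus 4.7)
The plan is to establish the observability estimate directly (rather than via null-controllability) by adapting the Lebeau--Robbiano active/passive phases argument already used in the proof of Theorem~\ref{thm:control_active-passive}. The key point is that the spectral inequality permits, on any short time interval, to observe the component $\pi_k \euler^{-tA}g$ of the solution whose ``frequencies'' lie below a cutoff $k$, while the dissipation estimate forces the complementary high-frequency component $(1-\pi_k)\euler^{-tA}g$ to decay at a rate that eventually dominates the cost $\euler^{c_1 k^a}$ of the spectral inequality, thanks to the hypothesis $a<b$.

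Concretely, for $T\in(0,T_\ast]$ with some fixed small $T_\ast\in(0,t_0)$ I would partition $[0,T]=\bigcup_{j\ge 0}[a_j,a_{j+1}]$ with $a_0=0$, $a_{j+1}=a_j+2\tau_j$, and $\sum_{j\ge 0}2\tau_j=T$, together with an increasing sequence $k_j\to\infty$ of frequency cutoffs. On each active phase $[a_j,a_j+\tau_j]$, the spectral inequality applied to $\pi_{k_j}\euler^{-sA}g$ and integrated over $s$ (after using $\lVert \pi_{k_j}h\rVert_{L^2(S)}^2\le 2\lVert h\rVert_{L^2(S)}^2+2\lVert(1-\pi_{k_j})h\rVert_{L^2(\Omega)}^2$) yields a bound of the form
\[
\lVert \pi_{k_j}\euler^{-(a_j+\tau_j)A}g\rVert_{L^2(\Omega)}^2 \le \frac{C\,\euler^{2c_1 k_j^a}}{\tau_j}\int_{a_j}^{a_j+\tau_j}\lVert \euler^{-sA}g\rVert_{L^2(S)}^2\,\drm s + C\,\euler^{2c_1 k_j^a}\sup_{s\in[a_j,a_j+\tau_j]}\lVert(1-\pi_{k_j})\euler^{-sA}g\rVert_{L^2(\Omega)}^2.
\]
On the passive phase $[a_j+\tau_j,a_{j+1}]$ the dissipation estimate gives $\lVert(1-\pi_{k_j})\euler^{-a_{j+1}A}g\rVert_{L^2(\Omega)}^2\le c_2^{-2}\euler^{-2c_2\tau_j^m k_j^b}\lVert \euler^{-a_jA}g\rVert_{L^2(\Omega)}^2$. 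Iterating this pair of estimates (using that the semigroup is a contraction to control the various intermediate norms), at each step ``paying'' $\euler^{2c_1 k_j^a}$ to observe a low-frequency slice of the solution and ``earning'' $\euler^{-2c_2\tau_j^m k_j^b}$ of decay from the next passive phase, I would make the geometric choices $k_j=k_0\,2^{j}$ and $\tau_j=\tau_0\,2^{-\rho j}$ for a suitable $\rho>0$ with $\sum_{j\ge 0}\tau_j=T/2$. The assumption $a<b$ ensures that the decay exponents dominate the growth exponents, so that the resulting series telescopes and produces the bound $\lVert \euler^{-TA}g\rVert_{L^2(\Omega)}^2\le C\exp(C/T^{am/(b-a)})\int_0^T\lVert \euler^{-sA}g\rVert_{L^2(S)}^2\,\drm s$ for every $T\in(0,T_\ast]$.

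To remove the restriction to small times, I would split $[0,T]=[0,T-T_\ast/2]\cup[T-T_\ast/2,T]$ for $T>T_\ast/2$, apply the small-time observability on the final subinterval of length $T_\ast/2$ (to the shifted initial datum $\euler^{-(T-T_\ast/2)A}g$), and use contractivity $\lVert \euler^{-TA}g\rVert_{L^2(\Omega)}\le\lVert \euler^{-(T-T_\ast/2)A}g\rVert_{L^2(\Omega)}$ to close the estimate. Since $T\mapsto\exp(C/T^{am/(b-a)})$ is decreasing, the uniform bound for $T\ge T_\ast/2$ is the one at $T=T_\ast/2$, and the single expression $C\exp(C/T^{am/(b-a)})$ valid for all $T>0$ results. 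The main technical obstacle will be the bookkeeping in the iteration: balancing the two geometric sequences $(k_j)$ and $(\tau_j)$ so that the telescoping exponent comes out to exactly $am/(b-a)$, and simultaneously absorbing the cross term $\lVert(1-\pi_{k_j})\euler^{-sA}g\rVert_{L^2(\Omega)}^2$---arising because $\pi_{k_j}$ need not commute with restriction to $S$---via the exponential decay from the dissipation estimate applied on the preceding passive phase.
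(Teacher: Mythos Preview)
The paper does not supply its own proof of this statement: Theorem~\ref{thm:BPSM} is quoted from \cite[Theorem~2.1]{BeauchardPS-18} without argument, so there is no proof here to compare against. Your outline is the correct one and is precisely the strategy carried out in \cite{BeauchardPS-18}; it is the natural adaptation of the proof of Theorem~\ref{thm:control_active-passive} (which the paper \emph{does} present in full) to the setting where the $\pi_k$ are not assumed to be spectral projections of $A$, the dissipation hypothesis replacing the explicit spectral-calculus decay $\lVert \euler^{-tA}\chi_{(E,\infty)}(A)\rVert\le \euler^{-tE}$ used there. Your identification of the cross term coming from the failure of $\pi_{k_j}$ to commute with $\chi_S$, and the need to absorb it via the dissipation estimate on an adjacent phase, is exactly the point.

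One technical wrinkle to watch when you write it out: in your displayed inequality you pass from an integral over $s\in[a_j,a_j+\tau_j]$ to a bound on $\lVert\pi_{k_j}\euler^{-(a_j+\tau_j)A}g\rVert^2$ at the right endpoint, but since $\pi_{k_j}$ need not commute with the semigroup, $t\mapsto\lVert\pi_{k_j}\euler^{-tA}g\rVert$ is not a priori monotone. The clean way around this is to work with the full norm: use contractivity to get $\lVert \euler^{-(a_j+\tau_j)A}g\rVert^2\le\lVert \euler^{-sA}g\rVert^2$ for each $s$ in the active phase, split the right-hand side by Pythagoras into its $\pi_{k_j}$ and $(1-\pi_{k_j})$ components, apply the spectral inequality and the dissipation estimate to the two pieces, and only then integrate in $s$. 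This yields a recursion of the form
\[
\lVert \euler^{-a_{j+1}A}g\rVert^2\le \frac{C\,\euler^{2c_1k_j^a}}{\tau_j}\int_{a_j}^{a_{j+1}}\lVert \euler^{-sA}g\rVert_{L^2(S)}^2\,\drm s+C\,\euler^{2c_1k_j^a-2c_2\tau_j^m k_j^b}\lVert \euler^{-a_jA}g\rVert^2,
\]
after which your geometric choices $k_j\sim 2^j$, $\tau_j\sim T\,2^{-\rho j}$ and the telescoping go through and produce the exponent $am/(b-a)$ exactly as you say.
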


Let us remark that the proof of \cite[Theorem~2.1]{BeauchardPS-18} does not require $S$ to be open, but merely to have positive measure as observed in \cite{EgidiV-18}.

In the applications we discuss below, the projectors $\pi_k$ will be spectral projectors corresponding to the operator $A$ and the dissipation estimate will hold automatically.
Thus, the verification of the conditions of the theorem is again reduced to the verification of a spectral inequality.

In Theorem~\ref{thm:BPSM}, the estimate on the control cost is again given in the form
\begin{equation}
 \label{eq:control_cost_estimate_BeauchardPS}
 C_T = \tilde C \exp \left( \frac{\tilde C}{T} \right),
 \quad
 T > 0
\end{equation}
for a non-explicit constant $\tilde C$.
Note that this constant $C_T$ does not converge to zero as $T$ tends to $\infty$.
In some situations, however, the constant can be strengthened to show this asymptotic behavior at large times.
A step in this direction is~\cite[Theorem~1.2]{TenenbaumT-11}.
We note that there, more general control operators $B$ are considered, while $A$ is assumed to be a non-negative self-adjoint operator with purely discrete
spectrum.
\begin{theorem}[{\cite[Theorem~1.2]{TenenbaumT-11}}]
\label{thm:Tenenbaum-Tucsnac}
 Let $A$ be a non-negative operator in $\cH$ and let $B \in \cL(\cU, \cH_{\beta})$ for some $\beta \leq 0$, where $\cH_\beta$ is the completion of $\cH$ with respect to the scalar product
 \[
  \left\langle x,y \right\rangle_{\cH_\beta}
  =
  \left\langle (\Id + A^2)^{\beta/2} x, (\Id + A^2)^{\beta/2} x \right\rangle_\cH.
 \]
 Assume that $A$ is diagonalizable, that $\{ \phi_k \colon k \in \NN \}$ is an orthonormal basis of eigenvectors with corresponding non-decreasing sequence of eigenvalues $\{ \lambda_k \colon k \in \NN \}$ such that $\lim_{k \to \infty} \lambda_k = \infty$.
 Assume furthermore that there exists $s \in (0,1)$ such that for some $d_0, d_1 > 0$, we have
 \[
  \forall \{ a_k \}_{k \in \NN} \in \ell^2(\CC),
  \mu > 1,
  \quad
  \left(
  \sum_{\lambda_k^s \leq \mu}
  \lvert \alpha_k \rvert^2
  \right)^{1/2}
  \leq
  d_0 \euler^{d_1 \mu}
  \lVert
  \sum_{\lambda_k^s \leq \mu}
  a_k B^\ast \phi_k
  \rVert_\cU.
 \]
 Then, the system
 \begin{equation}
  \label{eq:system_TenenbaumTucsnac}
  \dot w = -A w + B u,
  \quad
  w(0) = z
 \end{equation}
 is null-controllable in any time $T > 0$.
 Moreover, given $c > h^{gh} g^{-g^2} d_1^h$, where $g = s/(1 - s)$, $h = g + 1 = 1/(1 - s)$, the control cost satisfies
 \[
  C_T
  \leq
  \tilde{C}
  T^{-1/2}
  \euler^{c/T^g}
 \]
 for a constant $\tilde{C}$ depending only on $d_0$, $d_1$, $c$, $\beta$, $s$, and $\lVert B \rVert_{\cL(\cU, \cH_{\beta})}$.
\end{theorem}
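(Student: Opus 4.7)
The plan is to reduce the null-controllability statement to an observability inequality via Theorem~\ref{thm:observabilty_controllability} and then establish the latter by a Lebeau--Robbiano-type iteration, carefully tracking constants. Concretely, it suffices to prove
\[
\lVert \euler^{-TA} z \rVert_\cH^2 \leq C_T^2 \int_0^T \lVert B^* \euler^{-tA} z \rVert_\cU^2 \drm t
\]
with the claimed $C_T$. The hypothesis, applied to a truncated expansion $u = \sum_{\lambda_k^s \leq \mu} u_k \phi_k$ with coefficients $a_k = u_k$, reads $\lVert u \rVert_\cH \leq d_0 \euler^{d_1 \mu} \lVert B^* u \rVert_\cU$; setting $E = \mu^{1/s}$, this is exactly a spectral inequality of the form~\eqref{eq:abstract_spectral_inequality} for $A$ with $C_\ur(E) = d_0^2 \euler^{2 d_1 E^s}$, so Lemma~\ref{lem:control_truncated_system} yields null-controllability of the truncated system on $\ran \chi_{(-\infty, E]}(A)$ in any time $\cT > 0$ with cost squared $d_0^2 \euler^{2 d_1 \mu}/\cT$.

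Next, I would follow the active/passive-phase scheme of the proof of Theorem~\ref{thm:control_active-passive}, now for general $s \in (0,1)$ rather than $s = 1/2$. Choose cut-offs $\mu_m$ and a partition $[0,T] = \bigcup_m [a_m, a_{m+1}]$ with $a_{m+1} - a_m = 2 T_m$; split each sub-interval into an active and a passive phase of length $T_m$; and in the $m$-th active phase apply the lemma above with $E_m = \mu_m^{1/s}$ and $\cT = T_m$ to drive the low-frequency projection $\chi_{(-\infty, E_m]}(A) u(a_m)$ to zero. In the $m$-th passive phase no control is applied, and the high-frequency part contracts by the factor $\euler^{-T_m \mu_m^{1/s}}$ thanks to $A \geq 0$ and the contractivity of $\euler^{-tA}$ on spectral subspaces.

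Third, instead of the dyadic ansatz $T_m = K 2^{-m/2}$ used for Theorem~\ref{thm:control_active-passive}, I would \emph{optimize} $(\mu_m)$ and $(T_m)$ under the constraint $\sum_m 2 T_m = T$. Heuristically, balancing the active-phase cost $\euler^{2 d_1 \mu_m}/T_m$ against the passive-phase decay $\euler^{-2 T_m \mu_m^{1/s}}$ forces $T_m \simeq \mu_m^{(s-1)/s} = \mu_m^{-1/g}$, and $\sum_m T_m = T$ then fixes the base scale so that the dominant exponent becomes $d_1 \mu_0 \simeq d_1^{1+g}/T^g = d_1^h/T^g$, matching the shape of the claimed bound.

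The main obstacle is the sharp determination of the multiplicative constant $c^\ast = h^{gh} g^{-g^2} d_1^h$ in front of $1/T^g$. Obtaining this precise value (rather than merely \emph{some} constant times $d_1^h$) requires solving the Lagrangian minimisation $\inf \sum_m d_1 \mu_m$ under the constraint $\sum_m \mu_m^{-1/g} = T$ explicitly and then summing the geometric series of $T_m^{-1/2}$-prefactors appearing from the local cost $d_0 \euler^{d_1\mu_m}/\sqrt{T_m}$ to extract the $T^{-1/2}$ factor in $C_T$. A secondary technicality is that $B$ maps into the completion $\cH_\beta$ rather than $\cH$ itself; since $\phi_k \in \cD(A^n)$ for every $n$, the pairings $\langle B^* \phi_k, u \rangle_\cU$ are well defined and the norm $\lVert B \rVert_{\cL(\cU, \cH_\beta)}$ is absorbed into the overall constant $\tilde C$.
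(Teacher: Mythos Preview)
The paper does not prove this theorem; it is quoted from~\cite{TenenbaumT-11} as an external input, so there is no proof in the paper to match line by line. However, the paper does comment explicitly on the \emph{method} behind it, and that comment is directly relevant to your proposal: immediately after Theorem~\ref{thm:Tenenbaum-Tucsnac} the text says that ``the proof in~\cite{TenenbaumT-11} has a \emph{different structure}'' from the Lebeau--Robbiano/Miller iteration, and earlier, right after the proof of Theorem~\ref{thm:control_active-passive}, it warns that the active/passive-phase construction ``makes it challenging to keep track of the estimate on the control cost\ldots\ Even trying to understand its $T$-dependence is difficult.'' Your plan is precisely that active/passive-phase construction (a generalisation of Theorem~\ref{thm:control_active-passive} to arbitrary $s\in(0,1)$, in the spirit of Theorem~\ref{thm:Miller}), so by the paper's own account you are taking a genuinely different route from the one Tenenbaum--Tucsnak use.

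On the substance of your outline: the iteration will certainly give null-controllability and \emph{some} bound of the form $\exp(c/T^g)$, but two points are more delicate than you indicate. First, the $T^{-1/2}$ prefactor does not fall out of the active/passive scheme as written in the paper; the bound~\eqref{eq:series_norm_control_function_LebeauLeRousseau} is a series of terms $T_m^{-1}\euler^{\cdots}$, and summing it gives a constant, not $T^{-1/2}$. Recovering the $T^{-1/2}$ requires a more careful scaling argument (or a direct observability inequality without splitting), which is presumably why Miller's Theorem~\ref{thm:Miller} only states $\limsup_{T\to 0}T^\beta\ln\kappa_T<\infty$ and does not claim the polynomial prefactor. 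Second, your reduction via Theorem~\ref{thm:observabilty_controllability} is stated in the paper only for bounded $B\colon\cU\to\cH$; for $B\in\cL(\cU,\cH_\beta)$ with $\beta<0$ the controllability map $\cB^T$ lands in $\cH_\beta$, and the duality argument of Lemma~\ref{lem:absop} needs to be redone in the rigged pair $(\cH,\cH_\beta)$. Your last sentence gestures at this but does not resolve it. So: your approach is plausible for the qualitative statement and the shape of the exponent, but the sharp constant $h^{gh}g^{-g^2}d_1^h$ and the $T^{-1/2}$ factor are exactly the features the paper says are hard to extract by this method, and you have not shown how you would get them.
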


Note that in Theorem~\ref{thm:Tenenbaum-Tucsnac} as well as in Theorem~\ref{thm:NTTV_control_abstract} below, the Duhamel formula~\eqref{eq:Duhamel_formula}, defining the mild solution of the system~\eqref{eq:system_TenenbaumTucsnac}, now describes a function in the Hilbert space $\cH_\beta$ whence also the semigroup $\euler^{-A \cdot}$ needs to be appropriately extended from $\cH$ to $\cH_\beta$, see e.g.~\cite[II.5.a]{EngelN-99} for details.

Theorem~\ref{thm:Tenenbaum-Tucsnac} shows that if $A$ has compact resolvent, then a spectral inequality with $C_\ur = d_0 \euler^{d_1 \lambda^s}$ for all $\lambda \geq 0$ and some $s \in (0,1)$
implies null-controllability in all times $T > 0$ with cost satisfying
\begin{equation}
 \label{eq:control_cost_estimate_TenenbaumT}
 C_T
 \leq
 \frac{C_1}{\sqrt{T}} \exp \left( \frac{C_2}{T^{\frac{s}{1 - s}}} \right).
\end{equation}
The upper bound in~\eqref{eq:control_cost_estimate_TenenbaumT} decays proportional to $\sqrt{T}^{-1}$ as $T$ tends to infinity and thus improves upon the upper bound in~\eqref{eq:control_cost_estimate_BeauchardPS}.
Furthermore, \cite{TenenbaumT-11} provides an estimate on $C_2$ in terms of $s$ and $d_1$.
However, it remains unclear whether and how $C_1$ depends on $s$, $C_2$, $d_0$, $d_1$, and on the operator $B$.

While the results in~\cite{Miller-10} and in~\cite{BeauchardPS-18} are both inspired by~\cite{LebeauR-95} and thus the structure of the proofs is rather similar, the proof in~\cite{TenenbaumT-11} has a different structure which makes it easier to keep track of the dependence of the constant $C_T$ in terms of the model parameters, even though this analysis has not been thoroughly performed in~\cite{TenenbaumT-11}.

In the recent paper \cite{NakicTTV-control-prep}, the result of~\cite{TenenbaumT-11} is generalized to non-negative self-adjoint operators (regardless of the spectral type) with explicit dependence on the model parameter.
This unifies advantages of all the control cost bounds mentioned above, at least for heat flow control problems.

\begin{theorem}[\cite{NakicTTV-control-prep}]
 \label{thm:NTTV_control_abstract}
 Let $A$ be a non-negative, self-adjoint operator in a Hilbert space $\cH$ and $B \in \cL(\cU, \cH_\beta)$ for some $\beta \leq 0$, where $\cU$ is a Hilbert space and $\cH_\beta$ is defined as in Theorem~\ref{thm:Tenenbaum-Tucsnac}.
 Assume that there are $d_0 > 0$, $d_1 \geq 0$ and $s \in (0,1)$ such that for all $\lambda > 0$ we have the spectral inequality~\eqref{eq:abstract_spectral_inequality} with $C_\ur(\lambda) = d_0 \euler^{d_1 \lambda^s}$.
 Then for all $T > 0$, we have
 \begin{equation*} %\label{eq:observability_inequality_NTTV}
   \lVert \euler^{- A T} u_0 \rVert_\cH^2
   \leq
   C_\obs^2
   \int_0^T
   \lVert B^\ast \euler^{- A t} u_0 \rVert_\cU^2
   \drm t
 \end{equation*}
 where
 \[
  C_\obs^2
  =
  \frac{C_1 d_0}{T} K^{C_2}
  \exp
  \left(
    C_3
    \left(
    \frac{d_1+(-\beta)^{C_4}}{T^s}
    \right)^{\frac{1}{1 - s}}
  \right)
  \quad
  \text{with}
  \quad
  K
  =
  2 d_0 \euler^{-\beta} \lVert B \rVert_{\cL(\cU,\cH_\beta)} + 1.
 \]
 Here $C_1$, $C_2$, $C_3$, and $C_4$ are constants which depend only on $s$.
\end{theorem}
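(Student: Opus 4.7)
My plan is to adapt the block-decomposition method of \cite{TenenbaumT-11}, replacing the eigenfunction expansion by the spectral resolution of $A$ and carefully tracking every occurrence of the constants $d_0$, $d_1$, $s$, $\beta$ and $\lVert B \rVert_{\cL(\cU,\cH_\beta)}$. Write $P_\lambda := \chi_{(-\infty,\lambda]}(A)$. The strategy has three components: a local-in-time observability estimate on $\ran P_\lambda$ obtained from the spectral inequality, a dissipation estimate on the high-frequency complement, and an iteration with a geometric sequence of cutoffs whose time-steps are balanced by a Young-type optimization.

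For the low-frequency step, the semigroup contraction $\lVert \euler^{-\tau A}v\rVert_\cH \le \lVert \euler^{-tA}v\rVert_\cH$ for $0\le t\le \tau$ together with the invariance of $\ran P_\lambda$ under $\euler^{-tA}$ allows one to apply the spectral inequality \eqref{eq:abstract_spectral_inequality} pointwise in $t$ and integrate, producing
\begin{equation*}
  \lVert \euler^{-\tau A}v\rVert_\cH^2 \le \frac{d_0\,\euler^{d_1\lambda^s}}{\tau}\int_0^\tau \lVert B^\ast \euler^{-tA}v\rVert_\cU^2\drm t,\qquad v\in\ran P_\lambda.
\end{equation*}
This is the origin of the $T^{-1}$ factor in $C_\obs^2$. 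On the high-frequency side, the functional calculus gives $\lVert \euler^{-\tau A}(I-P_\lambda)\rVert\le \euler^{-\tau\lambda}$.

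Next I would fix an increasing sequence $0=\lambda_0<\lambda_1<\cdots\to\infty$ and a partition $a_0=0$, $a_{j+1}-a_j=\tau_j$ with $\sum_j\tau_j=T$. Splitting $u(a_j):=\euler^{-a_jA}u_0 = P_{\lambda_j}u(a_j)+(I-P_{\lambda_j})u(a_j)$, applying the low-frequency bound to the first summand with cutoff $\lambda_j$ and the dissipation estimate $\euler^{-\tau_j\lambda_j}$ to the second, and using commutativity of the spectral projectors with the semigroup, the triangle inequality combined with the elementary split $2ab\le a^2+b^2$ produces a recursion of the form
\begin{equation*}
  \lVert u(a_{j+1})\rVert_\cH^2 \le \alpha_j\lVert u(a_j)\rVert_\cH^2 + \gamma_j \int_{a_j}^{a_{j+1}}\lVert B^\ast \euler^{-tA}u_0\rVert_\cU^2\drm t,
\end{equation*}
with $\alpha_j$ of order $\euler^{-2\tau_j\lambda_j}$ and $\gamma_j$ of order $d_0\,\euler^{d_1\lambda_j^s}/\tau_j$. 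Iterating this recursion and passing $a_j\to T$, justified by strong continuity of the semigroup, yields the observability inequality with $C_\obs^2$ controlled by $\sup_j\bigl(\gamma_j\prod_{k>j}\alpha_k\bigr)$, up to a bounded geometric-series factor; the term coming from the initial state vanishes because $\sum_j\tau_j\lambda_j$ diverges for our choice of sequences.

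The main obstacle is the optimal selection of $(\lambda_j,\tau_j)$: the two constraints $\sum_j\tau_j=T$ and the minimisation of $\sup_j\bigl(d_1\lambda_j^s-2\sum_{k>j}\tau_k\lambda_k\bigr)$ point, via a Young-type argument, to $\lambda_j=c\rho^j$ geometric with some ratio $\rho>1$ and $\tau_j\propto d_1\lambda_j^{s-1}$, so that $\sum_j\tau_j$ fixes the ground scale $\lambda_0\asymp (d_1/T)^{1/(1-s)}$ and plugging back yields the stated exponent $C_3(d_1/T^s)^{1/(1-s)}$. The $\cH_\beta$-duality loss is handled by $\lVert B^\ast v\rVert_\cU\le\lVert B\rVert_{\cL(\cU,\cH_\beta)}\lVert v\rVert_{\cH_{-\beta}}$ together with the spectral-band estimate $\lVert(I+A^2)^{-\beta/2}P_{\lambda_j}\rVert\le(1+\lambda_j^2)^{-\beta/2}$; this trades a polynomial factor in $\lambda_j$ for an additive $|\beta|\ln\lambda_j$ in the effective uncertainty exponent, which after reoptimisation inflates the effective $d_1$ by a quantity of order $(-\beta)^{C_4}$ and generates the multiplicative prefactor $K^{C_2}$. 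Ensuring that the final $C_1,\ldots,C_4$ depend only on $s$ requires uniform control over the geometric-series factors and over the Young constant throughout the iteration, and this explicit bookkeeping is the principal technical difficulty.
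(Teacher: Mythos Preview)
The paper does not actually prove this theorem; it is quoted without proof from \cite{NakicTTV-control-prep}. What the paper does say is that \cite{NakicTTV-control-prep} generalises \cite{TenenbaumT-11} to operators without compact resolvent while making the dependence on the parameters explicit, and it contrasts this with the active--passive construction of \cite{RousseauL-12} (spelled out here as Theorem~\ref{thm:control_active-passive}), noting that the latter ``makes it challenging to keep track of the estimate on the control cost''. Your plan to adapt the \cite{TenenbaumT-11} block-decomposition, replacing eigenfunction sums by the spectral resolution $P_\lambda=\chi_{(-\infty,\lambda]}(A)$, is therefore precisely the route the paper attributes to the cited source, and your low-frequency/high-frequency splitting, the geometric choice $\lambda_j\sim\rho^j$ with $\tau_j\propto d_1\lambda_j^{s-1}$, and the resulting exponent $(d_1/T^s)^{1/(1-s)}$ are all consistent with that scheme.

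One point in your recursion deserves more care. After the low-frequency observability step you obtain an integral of $\lVert B^\ast P_{\lambda_j}\euler^{-tA}u_0\rVert_\cU^2$, not of $\lVert B^\ast\euler^{-tA}u_0\rVert_\cU^2$, because $B^\ast$ need not commute with $P_{\lambda_j}$. Removing the projector costs a cross term $\lVert B^\ast(I-P_{\lambda_j})\euler^{-tA}u_0\rVert_\cU$, and since $(I-P_{\lambda_j})$ lands in the high-frequency part the $\cH_{-\beta}$-norm there is \emph{not} dominated by the $\cH$-norm. The standard remedy is to insert an extra half-step of dissipation (or equivalently to stagger the cutoffs $\lambda_j<\lambda_{j+1}$ so that the high-frequency tail at level $\lambda_j$ has already been damped by the previous interval) before invoking $\lVert B\rVert_{\cL(\cU,\cH_\beta)}$; this is where the factor $K=2d_0\euler^{-\beta}\lVert B\rVert_{\cL(\cU,\cH_\beta)}+1$ and the additive correction $(-\beta)^{C_4}$ genuinely enter. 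Your sketch gestures at this (``trades a polynomial factor in $\lambda_j$ for an additive $|\beta|\ln\lambda_j$'') but the mechanism you describe would only work on the low-frequency piece; make sure the bookkeeping closes on the complementary piece as well.
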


\section{Null-controllability of the heat and Schr\"odinger semigroups}
In the previous Section~\ref{sec:uncertainty-control}, we have seen how uncertainty relations, respectively spectral inequalities, lead to null-con\-trollabi\-lity of abstract systems.
In particular, Theorem~\ref{thm:NTTV_control_abstract} provides a very explicit estimate on the resulting control cost.
We now combine this abstract result with the results of Sections~\ref{sec:UCP_Logvinenko-Sereda} and~\ref{sec:Carleman} to deduce null-controllability
of the heat equation on cubes and on $\RR^d$ with so-called interior control and provide explicit estimates on the control cost.
In particular, the cost will be explicitly given in terms of parameters which describe the geometry of the control set.

We start by examining the classical heat equation. Recall from Section~\ref{sec:Carleman} that $\Lambda_L = (-L/2, L/2)^d \subset \RR^d$ for $L > 0$.
Let $\Omega \in \{ \Lambda_L, \RR^d \}$.
If $\Omega = \RR^d$, then $\Delta$ denotes the self-adjoint Laplacian in $L^2(\RR^d)$.
If $\Omega = \Lambda_L$, then $\Delta$ denotes the self-adjoint Laplacian in $L^2(\Lambda_L)$ with Dirichlet, Neumann or periodic boundary conditions.
Given a measurable $S \subset \RR^d$, the \emph{controlled heat equation} in time $[0,T]$ with control operator $B=\chi_{S\cap\Omega}$ (this choice is also called \emph{interior control}) is
\begin{equation}
  \label{eq:heat_equation}
  \frac{\partial}{\partial t} u - \Delta u = \chi_{S \cap \Omega} f,
  \quad
  u, f \in L^2((0,T) \times \Omega),\quad
  u(0, \cdot) = u_0 \in L^2(\Omega).
\end{equation}
Note that by the above convention, the boundary conditions are fixed by the choice of the self-adjoint Laplacian.
If $\Omega = \RR^d$, the system~\eqref{eq:heat_equation} is null-controllable if and only if $S$ is a thick set, see \cite{EgidiV-18,WangWZZ}.
If $\Omega = \Lambda_L$, the system~\eqref{eq:heat_equation} is null-controllable if and only if $\lvert \Lambda_L \cap S \rvert > 0$, see \cite{ApraizEWZ-14}.
Furthermore, in~\cite{EgidiV-18}, combining the spectral inequalities from Corollaries~\ref{cor:spectral-inequality-full-space} and \ref{cor:cubes-spectral} with
the technique by \cite{BeauchardPS-18}, cf.~Theorem~\ref{thm:BPSM}, the following estimate on the control cost is provided:

\begin{theorem}
 \label{thm:control_cost_thick_Egidi_Veselic}
 Let $L > 0$,
 $\Omega \in \{ \Lambda_L, \RR^d \}$,
 $S \subset \RR^d$ a $(\gamma, a)$-thick set with $a = (a_1, \dots, a_d)$ and $\gamma > 0$.
 If $\Omega = \Lambda_L$, we assume that $0 < a_j \le L$ for all $j = 1, \dots, d$ .
 Then, for every $T > 0$, the system~\eqref{eq:heat_equation} is null-controllable in time $T$ with cost satisfying
 \begin{equation}
 \label{eq:control_cost_thick}
  C_T
  \leq
  C_1^{1/2} \exp \left( \frac{C_1}{2 T} \right),
  \quad
  \text{where}
  \quad
  C_1
  =
  \left( \frac{K^d}{\gamma} \right)^{K ( d + \lVert a \rVert_1)},
 \end{equation}
where $K$ is a universal constant and $\lVert a \rVert_1= \sum_{j=1}^{d}a_j$.
\end{theorem}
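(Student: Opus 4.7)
My plan is to combine the scale-free spectral inequalities of Section~\ref{sec:UCP_Logvinenko-Sereda} with the abstract observability result of Beauchard--Pravda-Starov--Miller (Theorem~\ref{thm:BPSM}) and then invoke the equivalence between null-controllability and final-state-observability (Theorem~\ref{thm:observabilty_controllability}) to convert the resulting observability estimate into a control cost bound.

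First I would set $A = -\Delta$ in $\cH = L^2(\Omega)$ (with the prescribed self-adjoint realization) and take as spectral projectors $\pi_k := \chi_{(-\infty , k^2]}(A)$ for $k \in \NN$. Corollary~\ref{cor:spectral-inequality-full-space} in the case $\Omega = \RR^d$ and Corollary~\ref{cor:cubes-spectral} (after the appropriate rescaling $\TT^d_L \leftrightarrow \Lambda_L$) in the case $\Omega = \Lambda_L$ then yield
\[
 \lVert \pi_k f \rVert_{L^2(\Omega)}
 \leq
 \Bigl( \tfrac{K_0^d}{\gamma} \Bigr)^{K_0 (d + \lVert a \rVert_1 k)}
 \lVert \pi_k f \rVert_{L^2(S\cap \Omega)}
 =: \euler^{c_1 k} \lVert \pi_k f \rVert_{L^2(S\cap\Omega)}
\]
for a universal constant $K_0 > 0$, i.e.\ the spectral-inequality hypothesis of Theorem~\ref{thm:BPSM} is fulfilled with exponent $a=1$ and $c_1 = K_0 (d + \lVert a \rVert_1) \ln(K_0^d/\gamma)$.

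Second, the dissipation estimate required by Theorem~\ref{thm:BPSM} is immediate for the heat semigroup: since $\{\euler^{t \Delta}\}_{t \geq 0}$ is a contraction and $(\Id-\pi_k)$ projects onto the spectral subspace associated with $[k^2,\infty)$, the functional calculus gives
\[
 \lVert (\Id-\pi_k)\, \euler^{t \Delta} g \rVert_{L^2(\Omega)} \leq \euler^{-t k^2} \lVert g \rVert_{L^2(\Omega)} \quad \text{for all } g \in L^2(\Omega),\ t > 0,
\]
corresponding to parameters $c_2 = 1$, $b = 2$, $m = 1$, and hence $am/(b-a) = 1$. With these two inputs Theorem~\ref{thm:BPSM} produces an observability estimate of the form $\lVert \euler^{T \Delta} u_0 \rVert_{L^2(\Omega)}^2 \leq C \exp(C/T) \int_0^T \lVert \euler^{t \Delta} u_0 \rVert_{L^2(S\cap \Omega)}^2 \drm t$, and Theorem~\ref{thm:observabilty_controllability} converts this into null-controllability of~\eqref{eq:heat_equation} in time $T$ with cost $C_T \leq (C \exp(C/T))^{1/2}$, matching the functional form in~\eqref{eq:control_cost_thick}.

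The hard part will be to track how the constant $C$ in Theorem~\ref{thm:BPSM} depends on the spectral-inequality parameter $c_1$: the statement of that theorem asserts only the existence of some $C > 1$, whereas~\eqref{eq:control_cost_thick} demands a specific expression. To overcome this I would revisit the construction in the proof of Theorem~\ref{thm:BPSM} in~\cite{BeauchardPS-18} (in the explicit form carried out in~\cite{EgidiV-18}) and verify that $C$ may be taken proportional to $\euler^{c_1}$. Substituting our value of $c_1$ and collecting all remaining universal constants into a single $K$ then produces $C_1 = (K^d/\gamma)^{K(d + \lVert a \rVert_1)}$, which is precisely the bound claimed in~\eqref{eq:control_cost_thick}.
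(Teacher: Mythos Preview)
Your proposal is correct and follows precisely the approach the paper itself describes: it attributes Theorem~\ref{thm:control_cost_thick_Egidi_Veselic} to~\cite{EgidiV-18}, obtained by combining the spectral inequalities of Corollaries~\ref{cor:spectral-inequality-full-space} and~\ref{cor:cubes-spectral} with the Beauchard--Pravda-Starov method (Theorem~\ref{thm:BPSM}). Your identification of the ``hard part''---that the constant $C$ in Theorem~\ref{thm:BPSM} is not explicit and must be tracked through the construction in~\cite{BeauchardPS-18,EgidiV-18}---is exactly the point the paper leaves implicit by citing~\cite{EgidiV-18}; note also the paper's remark after Theorem~\ref{thm:BPSM} that $S$ need only be measurable (not open), which is needed here since thick sets are merely measurable.
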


As discussed in Section~\ref{sec:UCP_Logvinenko-Sereda}, the spectral inequalities used in the proof of Theorem~\ref{thm:control_cost_thick_Egidi_Veselic} have recently been extended in~\cite{Egidi} to strips, see Remark~\ref{rem:strip-spectral}.
This has led in an analogous way to the following result which, to the best of our knowledge, is the first result of this kind dealing with an unbounded domain $\Omega$ that is not the whole of $\RR^d$.

\begin{theorem}[\cite{Egidi}]
 \label{thm:control_cost_thick_Egidi}
 Let $L > 0$,  $\Omega =(-L/2,L/2)^{d-1}\times \RR $,
 $S \subset \RR^d$ a $(\gamma, a)$-thick set with  $\gamma > 0$.
 and $0 < a_j \le L$ for all $j = 1, \dots, d-1$.
 Then, for every $T > 0$, the system~\eqref{eq:heat_equation} with Dirichlet or Neumann boundary conditions is null-controllable in time $T$ with cost satisfying the bound  \eqref{eq:control_cost_thick}.
\end{theorem}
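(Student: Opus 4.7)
The plan is to mirror the proof of Theorem~\ref{thm:control_cost_thick_Egidi_Veselic}, replacing the torus spectral inequality of Corollary~\ref{cor:cubes-spectral} by its strip analogue announced in Remark~\ref{rem:strip-spectral}, and then plugging the result into Theorem~\ref{thm:BPSM} of Beauchard--Pravda-Starov--Miller. The only genuinely new input is the strip spectral inequality; once we have it, the remainder is a routine adaptation.

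First I would establish a spectral inequality of the form
\[
\|f\|_{L^2(S\cap\Omega)}\;\ge\;\Bigl(\tfrac{\gamma}{K^{d}}\Bigr)^{K\sqrt{E}\|a\|_{1}+c_{d}}\,\|f\|_{L^2(\Omega)}
\]
for every $E>0$ and every $f\in\ran\bigl(\chi_{(-\infty,E]}(-\Delta_\Omega)\bigr)$, with universal constants $K$ and $c_d$. The starting point is the Logvinenko--Sereda estimate of \cite[Theorem~9]{Egidi}, which applies to $L^2$-functions on $\TT^{d-1}_L\times\RR$ having finite Fourier series in the torus variables and Fourier transform supported in a compact set in the $\RR$-variable. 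Following the strategy of \cite[\S5]{EgidiV-18}, I would reduce the Dirichlet or Neumann case on $\Omega=(-L/2,L/2)^{d-1}\times\RR$ to this periodic/$\RR$ case by extending eigenfunctions via reflections across the bounded faces to functions on $\TT^{d-1}_{2L}\times\RR$, so that the extensions belong to the range of the $(-\Delta)$-spectral projector up to energy $E$ on the enlarged domain; in particular, their torus-Fourier frequencies lie in $[-\sqrt{E},\sqrt{E}]^{d-1}$ and their $\RR$-Fourier support is contained in $[-\sqrt{E},\sqrt{E}]$. Correspondingly, the set $S\cap\Omega$ has to be extended to a new thick set on $\TT^{d-1}_{2L}\times\RR$ by reflecting across the bounded faces and tiling by $(4\pi L\ZZ)^{d-1}$; the hypothesis $a_j\le L$ for $j=1,\dots,d-1$ guarantees that the thickness parameters of the enlarged set remain controlled by those of $S$.

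With this spectral inequality at hand, I would apply Theorem~\ref{thm:BPSM} with $A=-\Delta_\Omega$, $\pi_k=\chi_{(-\infty,k]}(A)$ and control set $S\cap\Omega$. Writing the inequality as $\|\pi_k g\|_{L^2(\Omega)}\le \exp(c_1 k^{1/2})\|\pi_k g\|_{L^2(S\cap\Omega)}$ gives $a=1/2$ in the notation of Theorem~\ref{thm:BPSM} (absorbing the polynomial prefactor in $c_1$). The dissipation estimate with $b=1$, $m=1$ is immediate from the spectral theorem, since $\|(1-\pi_k)\euler^{-tA}g\|_{L^2(\Omega)}\le \euler^{-tk}\|g\|_{L^2(\Omega)}$. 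The exponent in the resulting control cost is $\tfrac{am}{b-a}=1$, so Theorem~\ref{thm:BPSM} yields $C_T\le C_1^{1/2}\exp(C_1/(2T))$, and tracking the constants exactly as in the proof of Theorem~\ref{thm:control_cost_thick_Egidi_Veselic} produces the bound \eqref{eq:control_cost_thick} with $C_1=(K^d/\gamma)^{K(d+\|a\|_1)}$.

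The main obstacle is the first step, namely obtaining the spectral inequality on $\Omega$ with the correct scaling in $\sqrt{E}$ and in the thickness parameters. The reflection/tiling argument that lifts $S\cap\Omega$ to a thick set on the enlarged domain $\TT^{d-1}_{2L}\times\RR$ must be performed while keeping the thickness constants uniform in $L$ and preserving the hypothesis $a_j\le 2\pi(2L)$ needed by the Logvinenko--Sereda estimate on the strip; in particular one must check that the extended eigenfunctions are indeed spectrally concentrated in the correct box, which is exactly where the Dirichlet/Neumann split enters (even versus odd reflections). Once this bookkeeping is carried out, passing from the spectral inequality to null-controllability with the claimed cost proceeds precisely as in the cube and $\RR^d$ cases.
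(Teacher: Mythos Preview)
Your proposal is correct and follows essentially the same route as the paper: the paper does not spell out a proof of this theorem but states that it is obtained ``in an analogous way'' to Theorem~\ref{thm:control_cost_thick_Egidi_Veselic}, using the strip spectral inequality of Remark~\ref{rem:strip-spectral} (which in turn is derived from \cite[Theorem~9]{Egidi} via the reflection argument of \cite[\S5]{EgidiV-18}) and then invoking Theorem~\ref{thm:BPSM}. This is exactly the plan you outline, including the reflection/tiling step for passing from the periodic strip to Dirichlet/Neumann boundary conditions.
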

Here, thickness of $S$ is again a necessary requirement for null-controllability (where obviously $S$ can be arbitrarily modified outside $\Omega$).
We refer to \cite{Egidi} for more details.

In light of the discussion made in the previous section, the bound in Theorem~\ref{thm:control_cost_thick_Egidi_Veselic} (and, of course, Theorem~\ref{thm:control_cost_thick_Egidi}) can be strengthened if Theorem~\ref{thm:BPSM} in the last step of the proof is replaced by Theorem \ref{thm:NTTV_control_abstract}. For $\Omega \in \{ \Lambda_L, \RR^d \}$, this has been performed in \cite{NakicTTV-control-prep, Taeufer-18}:
\begin{theorem}
 \label{thm:control_cost_thick_2}
 Let $L > 0$,
 $\Omega \in \{ \Lambda_L, \RR^d \}$,
 $S \subset \RR^d$ a $(\gamma, a)$-thick set with $a = (a_1, \dots, a_d)$ and $\gamma > 0$.
 If $\Omega = \Lambda_L$, we assume that $0 < a_j \le L$ for all $j = 1, \dots, d$ .
 Then, for every $T > 0$, the system~\eqref{eq:heat_equation} is null-controllable in time $T$ with cost satisfying
 \begin{equation}
 \label{eq:control_cost_thick_2}
 C_T
 \leq
 \frac{D_1}{\gamma^{D_2} \sqrt{T}} \exp \left( \frac{D_3 \lVert a \rVert_1^2 \ln^2( D_4 \gamma )}{T} \right).
 \end{equation}
 where $D_1$ to $D_4$ are constants which depend only on the dimension.
\end{theorem}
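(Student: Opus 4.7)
The plan is to invoke the abstract observability estimate of Theorem~\ref{thm:NTTV_control_abstract} with the scale-free spectral inequalities of Corollaries~\ref{cor:spectral-inequality-full-space} (for $\Omega = \RR^d$) and~\ref{cor:cubes-spectral} (for $\Omega = \Lambda_L$) as input, and then to translate the resulting observability bound into a control-cost estimate via Theorem~\ref{thm:observabilty_controllability}. The system~\eqref{eq:heat_equation} fits the abstract framework of Section~\ref{sec:uncertainty-control} with $\cH = \cU = L^2(\Omega)$, $A = -\Delta \geq 0$ self-adjoint (with the prescribed boundary conditions if $\Omega = \Lambda_L$), and $B = \chi_{S \cap \Omega}$ bounded on $L^2(\Omega)$ with $\lVert B \rVert \leq 1$. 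In particular, one may take $\beta = 0$ in the notation of Theorem~\ref{thm:NTTV_control_abstract}.

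First I would rewrite the spectral inequality in the exponential form required by the abstract theorem. Squaring the corollaries and using that $A\geq 0$ yields, for all $E > 0$ and all $u \in L^2(\Omega)$,
\begin{equation*}
 \lVert \chi_{(-\infty, E]}(A) u \rVert_{L^2(\Omega)}^2
 \leq
 \Bigl( \frac{K_1^d}{\gamma} \Bigr)^{2 K_1 (2 \sqrt{E} \lVert a \rVert_1 + d)}
 \lVert B^* \chi_{(-\infty, E]}(A) u \rVert_{L^2(\Omega)}^2
 = d_0 \, \euler^{d_1 \sqrt{E}}
 \lVert B^* \chi_{(-\infty, E]}(A) u \rVert_{L^2(\Omega)}^2 ,
\end{equation*}
with $d_0 := (K_1^d/\gamma)^{2 K_1 d}$ and $d_1 := 4 K_1 \lVert a \rVert_1 \ln(K_1^d/\gamma)$; note that $d_1 \geq 0$ since $\gamma \leq 1 \leq K_1^d$. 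This matches~\eqref{eq:abstract_spectral_inequality} with the choice $s = 1/2$.

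Now I would apply Theorem~\ref{thm:NTTV_control_abstract}. With $s = 1/2$ one has $1/(1-s) = 2$, and since $\beta = 0$ the term $(-\beta)^{C_4}$ vanishes, so the abstract bound reads
\begin{equation*}
 C_T^2 \leq \frac{C_1 d_0}{T}\, K^{C_2} \exp\Bigl( \frac{C_3 d_1^2}{T} \Bigr), \qquad K = 2 d_0 + 1 .
\end{equation*}
Substituting the explicit expressions for $d_0$ and $d_1$, the combined prefactor $d_0 K^{C_2} \lesssim d_0^{1+C_2}$ is polynomial in $1/\gamma$ with exponent depending only on $d$, while $d_1^2 = 16 K_1^2 \lVert a \rVert_1^2 \ln^2(K_1^d/\gamma)$. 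Collecting all dimension-dependent factors into new constants $D_1,\ldots,D_4$ and using $\ln^2(K_1^d/\gamma) = \ln^2(\gamma/K_1^d)$ to bring the logarithm into the advertised shape yields precisely~\eqref{eq:control_cost_thick_2}. Finally, Theorem~\ref{thm:observabilty_controllability} together with Definition~\ref{def:optimal-feedback} identifies $C_T$ with the null-control cost, thereby providing null-controllability.

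The main, essentially mechanical, difficulty lies in the bookkeeping of the second step: one must carefully split the exponent $2 K_1 (2 \sqrt{E} \lVert a \rVert_1 + d)$ from the spectral inequality into an $E$-independent prefactor and a $\sqrt{E}$-linear piece, verify that the resulting $d_0, d_1$ satisfy the hypotheses of Theorem~\ref{thm:NTTV_control_abstract} uniformly in $L$ (so that the final bound is scale-free), and check that the factor $K^{C_2}$ can be absorbed into the polynomial dependence on $1/\gamma$ without introducing $L$-dependence.
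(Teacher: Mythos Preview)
Your proposal is correct and follows essentially the same approach as the paper: you combine the spectral inequalities from Corollaries~\ref{cor:spectral-inequality-full-space} and~\ref{cor:cubes-spectral}, rewrite them in the form $d_0\,\euler^{d_1\sqrt{E}}$ with $d_0$ polynomial in $1/\gamma$ and $d_1$ proportional to $\lVert a\rVert_1\ln(N/\gamma)$, feed this into Theorem~\ref{thm:NTTV_control_abstract} with $s=1/2$ and $\beta=0$, and then absorb all dimension-dependent constants into $D_1,\dots,D_4$. The paper's own proof does precisely this, only using generic constants $N_1,N_2,N_3$ to cover both corollaries at once rather than writing the explicit $K_1$-expressions you give.
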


\begin{proof}
 By Corollaries~\ref{cor:spectral-inequality-full-space} and~\ref{cor:cubes-spectral} we have the spectral inequality
 \[
  \forall E \geq 0, u \in \ran \chi{(-\infty, E]}(- \Delta)
  \colon
  \quad
  \lVert u \rVert_{L^2(\Omega)}^2
  \leq
  d_0 \euler^{d_1 \sqrt{E}}
  \lVert \chi_{S \cap \Omega} u \rVert_{L^2(\Omega)}^2
 \]
 with
 \[
  d_0
  =
  \left( \frac{ N_1}{\gamma} \right)^{N_2}
  \quad
  \text{and}
  \quad
  d_1 = N_3 \lVert a \rVert_1 \ln \left( \frac{N_1}{\gamma} \right),
 \]
 where $N_1$, $N_2$, and $N_3$ are constants, depending only on the dimension.
 Theorem~\ref{thm:NTTV_control_abstract} together with the equivalence between null-controllability and final-state-observability, and the absorption of all universal constants into $D_1$ to $D_4$ yields the result.
\end{proof}

\begin{remark}
 \label{rem:optimality_T}
 In order to discuss the bound~\eqref{eq:control_cost_thick_2}, let us first compare it to \emph{lower bounds} on the control cost.
 For the controlled heat equation \eqref{eq:heat_equation} with open $S$ it is known that the control cost grows at least proportional to $\exp(C/T)$ as $T$ tends to zero unless $S = \Omega$, see e.g.\ \cite{FernandezZ-00,Miller-04}.
 Thus, the $T$-dependence \eqref{eq:control_cost_thick_2} is optimal in the small time regime.

 On the other hand, the $T^{-1/2}$ term will dominate for large $T$.
 This is also optimal.
 One way to see this is to study the ODE system
 \begin{equation*} %\label{eq:controlled_ODE_system}
  \begin{cases}
   y'(t) = C f(t),
   \quad
   & y,f \in L^2((0,T), \CC),\\
   y(0) = y_0
   &\in \CC\\
  \end{cases}
 \end{equation*}
 the control cost of which can be explicitly computed and is $C/\sqrt{T}$ in time $T$ for every $T > 0$.
 This also shows that the minimal possible lower bound on the control cost in time $T$ of abstract controlled systems as in~\eqref{eq:abstract_control_system} is of order $T^{-1/2}$.
 This argument can be slightly generalized to show that this lower bound holds in fact for \emph{all} systems of the form~\eqref{eq:abstract_control_system}, see~\cite{NakicTTV-control-prep} for details.
 We conclude that the control cost in time $T$ is lower bounded by $C/\sqrt{T}$ for all $T > 0$ for some constant $C$.
 \par
 An interesting limit is the \emph{homogenization limit} of the control set where the parameter $a$ tends to zero while the parameter $\gamma$ remains constant.
 This corresponds to requiring an equidistribution on finer and finer scales $a$ while keeping the overall density $\gamma$ constant.
 We see that the exponential term, which is characteristic for the heat equation with control operator $B = \chi_{S \cap \Omega}$ where $S \subset \Omega, S \neq \Omega$, is annihilated.
 On the other hand the $1/\sqrt{T}$ factor, which is universal in the class of abstract linear control systems, remains unaffected.
 This limit can be interpreted as the control cost of the system with \emph{weighted full control}, i.e. where $\chi_S$ has been replaced by $c_\gamma \chi_{\Omega}$
with a $\gamma$-dependent constant $c_\gamma \in (0,1]$.
\end{remark}

Now we study the heat equation with \emph{non-negative} potential or homogeneous source term.
Instead of considering thick control sets $S \subset \RR^d$, we will restrict our attention to a special geometric setting, namely to equidistributed unions of $\delta$-balls.
Recall the notation from Section~\ref{sec:Carleman}: If $G > 0$, $\delta \in (0, G/2)$, and $Z$ is a $(G, \delta)$-equidistributed sequence then
\[
\equiset = \bigcup_{j \in (G\ZZ)^d} B(z_j , \delta).
\]
Let $L \geq G$ and $\Omega \in \{ \Lambda_L, \RR^d \}$.
For a non-negative $V \in L^\infty(\Omega)$ the \emph{controlled heat equation with potential $V$} in time $[0,T]$ with interior control in $\equiset \cap \Omega$ is
\begin{equation}
 \label{eq:heat_equation_with_potential}
  \frac{\partial}{\partial t} u - \Delta u + V u = \chi_{\equiset \cap \Omega} f,
  \quad
  u, f \in L^2((0,T) \times \Omega),\quad
  u(0, \cdot) = u_0 \in L^2(\Omega).
\end{equation}
In~\cite{NakicTTV-18}, Theorem \ref{thm:NakicTTV} and Miller's Theorem \ref{thm:Miller}
were combined to prove:

\begin{theorem}\label{thm:NTTV_control}
There exists $T' > 0$, depending on $G$, $\delta$, and $\lVert V \rVert_\infty$ such that for all $T \leq T'$, the system~\eqref{eq:heat_equation_with_potential} is null-controllable in time $T$ with cost $C_T$ satisfying
 \[
  C_T
  \leq
  2
  \left( \frac{G}{\delta} \right)^{K (1 + G^{4/3} \lVert V \rVert_\infty^{2/3})}
  \exp
  \left(
   \lVert V \rVert_\infty
   +
   \frac{\ln^2( \delta/G)
   \left(
   K G + 4 / \ln(2)
   \right)^2
   }{T}
  \right)
 \]
with a dimension-dependent $K$.
\end{theorem}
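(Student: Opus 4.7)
The plan is to combine the uncertainty relation of Theorem~\ref{thm:NakicTTV} with the abstract observability result of Miller's Theorem~\ref{thm:Miller}, and then to convert the resulting observability estimate into a control cost bound via the equivalence of Theorem~\ref{thm:observabilty_controllability}.

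I place the system~\eqref{eq:heat_equation_with_potential} into the abstract framework of Section~\ref{sec:uncertainty-control} by taking $\cH=\cU=L^2(\Omega)$, $A=-\Delta+V$ (self-adjoint with Dirichlet boundary conditions on $\Lambda_L$, or as the self-adjoint operator on $\RR^d$), and $B=\chi_{\equiset\cap\Omega}$. Since $V\geq 0$ we have $A\geq 0$, so $\euler^{-tA}$ is a contraction semigroup. Applied with energy parameter $\lambda\geq 0$, Theorem~\ref{thm:NakicTTV} furnishes the spectral inequality
\[
 \lVert \psi\rVert_{L^2(\Omega)}^2 \leq a_0 \euler^{2a\sqrt{\lambda}} \lVert \chi_{\equiset\cap\Omega}\psi\rVert_{L^2(\Omega)}^2, \qquad \psi\in\ran\chi_{(-\infty,\lambda]}(A),
\]
with $a_0=(G/\delta)^{K(1+G^{4/3}\lVert V\rVert_\infty^{2/3})}$ and $a=(KG/2)\ln(G/\delta)$; this is exactly hypothesis~\eqref{eq:spec_ineq_miller} of Theorem~\ref{thm:Miller} with reference operator $B_0=\Id$, exponent $\alpha=1/2$, and invariant subspaces $\cH_\lambda=\ran\chi_{(-\infty,\lambda]}(A)$.

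The remaining hypotheses of Theorem~\ref{thm:Miller} follow from the spectral theorem: the subspaces $\cH_\lambda$ are $\euler^{-tA}$-invariant and one has $\lVert \euler^{-tA}x\rVert \leq \euler^{-\lambda t}\lVert x\rVert$ for $x\in\cH_\lambda^\perp$, verifying~\eqref{eq:contractivity} with $m_0=1$, $m=0$, $\nu=1/2$; moreover $\beta:=\alpha/(1-\alpha)=\nu/(1-\nu)=1$ is the compatibility condition. Contractivity of the semigroup also gives $\lVert \euler^{-TA}x\rVert^2 \leq T^{-1}\int_0^T\lVert \euler^{-tA}x\rVert^2\,\drm t$, and since $T^{-1}\leq b_0\euler^{2b/T}$ holds for any $b,b_0>0$ once $T$ is small enough, the reference observability~\eqref{eq:obs_ineq_B0} with $B_0=\Id$ is satisfied on some interval $(0,T_0]$. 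Miller's theorem then yields, on a possibly shorter interval $(0,T']$, the bound $C_T^2\leq 4a_0b_0\exp(2c_\ast/T)$, where a direct calculation reduces the explicit Miller constant at $\beta=1$, $m=0$ to $c_\ast=(a^2/4)(s+2)^4$ with $s>0$ the positive solution of $s(s+2)=2\sqrt{b}/a$.

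It remains to tune $b$, $b_0$, and $T'$ so that the resulting bound takes the stated form. Since $c_\ast\to 4a^2=K^2G^2\ln^2(G/\delta)$ as $b\to 0^+$, while the admissibility condition $b_0\euler^{2b/T}\geq T^{-1}$ forces $T'$ to shrink as $b$ shrinks, one has to balance these two constraints; a suitable choice of $b$ inflates the leading term $(KG)^2\ln^2(G/\delta)$ to $(KG+4/\ln 2)^2\ln^2(\delta/G)$, the additional constant $4/\ln 2$ arising from the logarithmic relation between $b$ and the admissible $T'$. Finally, a Young-type inequality moves part of the $\lVert V\rVert_\infty$-dependence from the polynomial prefactor $\sqrt{a_0}$ into the exponential, producing the factor $\euler^{\lVert V\rVert_\infty}$ together with the displayed prefactor $(G/\delta)^{K(1+G^{4/3}\lVert V\rVert_\infty^{2/3})}$ after relabeling of universal constants. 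The main obstacle is precisely this last optimization: Miller's $c_\ast$, the admissible time $T'$, and the prefactor all depend on $b$ in coupled ways, so matching the particular constants $4/\ln 2$ and $\euler^{\lVert V\rVert_\infty}$ in the stated bound requires careful simultaneous bookkeeping rather than any further analytic input.
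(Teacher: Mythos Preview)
Your proposal is correct and follows precisely the route the paper indicates: the paper does not give a detailed proof but simply states that the result is obtained in \cite{NakicTTV-18} by combining Theorem~\ref{thm:NakicTTV} with Miller's Theorem~\ref{thm:Miller}, which is exactly what you carry out. Your only soft spot is the speculative explanation for the $\euler^{\lVert V\rVert_\infty}$ factor (nothing is ``moved out'' of $a_0$, since the full $a_0$ still appears as the prefactor after relabeling $K$; the extra factor more plausibly enters through the choice of $b_0$ or a spectral shift in the original argument), but you rightly identify this as residual bookkeeping rather than a missing analytic idea.
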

Again we can improve this bound by replacing Theorem \ref{thm:Miller} with a more suitable estimate. Furthermore,
certain unbounded domains can be treated as well.
More precisely, combining Theorems~\ref{thm:NTTVS} and~\ref{thm:NTTV_control_abstract}, we obtain
analogously to Theorem~\ref{thm:control_cost_thick_2} the following result.
\begin{theorem}
 \label{thm:control_cost_equidistributed}
 Let $G > 0$, $0 < \delta < G/2$, $Z$ a $(G,\delta)$-equidistributed sequence, $L \geq G$, and $\Omega \in \{ \Lambda_L, \RR^d \}$.
 Then, for every $T > 0$, the system~\eqref{eq:heat_equation_with_potential} is null-controllable in time $T$ with cost satisfying
 \begin{equation}
 \label{eq:control_cost_equidistributed}
  C_T
 \leq
 \frac{D_1}{\sqrt{T}}
 \left( \frac{G}{\delta} \right)^{D_2 (1 + G^{4/3} \lVert V \rVert_\infty^{2/3})}
 \exp \left( \frac{D_3 G^2 \ln^2( \delta/G)}{T} \right).
 \end{equation}
 where $D_1, D_2, D_3$ are constants which depend only on the dimension.
\end{theorem}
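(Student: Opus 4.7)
The plan is to mimic the proof of Theorem~\ref{thm:control_cost_thick_2}, with the Carleman-based spectral inequality Theorem~\ref{thm:NTTVS} taking over the role played there by the Logvinenko--Sereda-type Corollaries~\ref{cor:spectral-inequality-full-space} and~\ref{cor:cubes-spectral}. The two ingredients are a spectral inequality of the form~\eqref{eq:abstract_spectral_inequality} with $C_\ur(E) = d_0 \euler^{d_1 \sqrt{E}}$, together with the abstract control cost estimate of Theorem~\ref{thm:NTTV_control_abstract} and the equivalence between null-controllability and final-state-observability from Theorem~\ref{thm:observabilty_controllability}.

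First I will cast the system~\eqref{eq:heat_equation_with_potential} into the abstract framework of Section~\ref{sec:uncertainty-control}: take $\cH = \cU = L^2(\Omega)$, let $A = -\Delta + V$ be the self-adjoint realization on $L^2(\Omega)$ (with Dirichlet, Neumann, or periodic boundary conditions if $\Omega = \Lambda_L$), and let $B = \chi_{\equiset \cap \Omega}$, which is a bounded self-adjoint multiplication operator with $\lVert B \rVert \le 1$. Since $V \ge 0$, the operator $A$ is non-negative, so the hypotheses of Theorem~\ref{thm:NTTV_control_abstract} with $\beta = 0$ are met.

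Next I will extract the required spectral inequality. Both $\RR^d$ and $\Lambda_L$ with $L \geq G$ are $G$-admissible in the sense of~\eqref{eq:Gadmissible}, so Theorem~\ref{thm:NTTVS} applies. Specializing the supremum over $\lambda$ by taking $\lambda = 0$ yields, for all $E \geq 0$ and all $\psi \in \ran \chi_{(-\infty, E]}(A)$,
\[
\lVert \psi \rVert_{L^2(\Omega)}^2 \leq \Bigl(\tfrac{G}{\delta}\Bigr)^{K(1 + G^{4/3} \lVert V \rVert_\infty^{2/3} + G \sqrt{E})} \lVert \chi_{\equiset \cap \Omega} \psi \rVert_{L^2(\Omega)}^2,
\]
which has the required shape $C_\ur(E) = d_0 \euler^{d_1 \sqrt{E}}$ with $s = 1/2$, $d_0 = (G/\delta)^{K(1 + G^{4/3}\lVert V\rVert_\infty^{2/3})}$, and $d_1 = K G \ln(G/\delta)$.

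Finally, I will feed these parameters into Theorem~\ref{thm:NTTV_control_abstract} with $\beta = 0$ to obtain the observability inequality~\eqref{eq:abstract_observability_estimate} with $C_\obs^2 \leq (C_1 d_0 / T)(2 d_0 + 1)^{C_2} \exp(C_3 d_1^2 / T)$, and then invoke Theorem~\ref{thm:observabilty_controllability} to conclude null-controllability with $C_T \le C_\obs$. Substituting the expressions for $d_0$ and $d_1$, absorbing $(2 d_0 + 1)^{C_2}$ into a single power of $G/\delta$ with exponent a dimension-dependent multiple of $1 + G^{4/3}\lVert V\rVert_\infty^{2/3}$, and using $\ln^2(G/\delta) = \ln^2(\delta/G)$, yields the stated bound~\eqref{eq:control_cost_equidistributed}. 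I do not expect any substantial obstacle beyond careful bookkeeping of exponents; the only mild subtlety is the choice $\lambda = 0$ in Theorem~\ref{thm:NTTVS}, which produces the cleanest separate dependence on $\lVert V \rVert_\infty$ and $\sqrt{E}$ and lines up with the form required by the abstract control cost estimate.
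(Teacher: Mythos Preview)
Your proposal is correct and follows exactly the approach the paper indicates: it explicitly says the result is obtained ``combining Theorems~\ref{thm:NTTVS} and~\ref{thm:NTTV_control_abstract} \dots\ analogously to Theorem~\ref{thm:control_cost_thick_2}'', which is precisely what you do. The only minor point is that Theorem~\ref{thm:NTTVS} is stated for Dirichlet or Neumann boundary conditions, so your parenthetical mention of periodic boundary conditions slightly overreaches what that theorem provides (the paper is equally informal on this point).
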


Theorem~\ref{thm:control_cost_equidistributed} improves upon Theorem~\ref{thm:NTTV_control} since it allows for all times $T > 0$ and since the argument of the exponential term is now of order $G^2$ as $G \to 0$, which is optimal.

The difference between Theorems~\ref{thm:control_cost_thick_2} and~\ref{thm:control_cost_equidistributed} is that Theorem~\ref{thm:control_cost_thick_2} allows for more general control sets, while Theorem~\ref{thm:control_cost_equidistributed} treats Schr\"odinger operators with non-negative potential instead of the pure Laplacian.

\begin{remark}
  By the same arguments as in Remark~\ref{rem:optimality_T}, we see that the asymptotic $T$-dependence in Theorem~\ref{thm:control_cost_equidistributed} is optimal.
  Homogenization of the control set now corresponds to $G, \delta \to 0$ with $\delta/G = \rho$ for some $\rho \in (0, 1/2)$.
  In the limit, the upper bound in~\eqref{eq:control_cost_equidistributed} tends to
  \[
   \frac{D_1}{\sqrt{T}}
   \rho^{D_2}.
  \]
  We see that homogenization not only annihilates the term $\exp(C/T)$ which is characteristic for the heat equation, but also the influence of a non-negative potential $V$ on the control cost estimate disappears.
  \par
  Furthermore, the dependence of the exponential term on the parameter $G$ in~\eqref{eq:control_cost_equidistributed} is optimal.
  This can best be seen in the special case $V = 0$ by comparing it to a lower bound on the control cost in terms of the geometry deduced in~\cite{Miller-04}.
  In fact, for the heat equation on smooth, connected manifolds $\Omega$ with control operator $B = \chi_S$ for an open $S \subset \Omega$ it is proved in~\cite{Miller-04} that the control cost $C_T$ in time $T$ satisfies
 \begin{equation}
  \label{eq:lower_bound_geometry_Miller}
  \sup_{\overline B_\rho \subset \Omega \backslash \overline S}
  \rho^2/4
  \leq
  \liminf_{T \to 0} T \ln C_T.
 \end{equation}
 Ineq.~\eqref{eq:control_cost_equidistributed} on the other hand implies
 \begin{equation}
  \label{eq:upper_bound_geometry}
  \limsup_{T \to 0} T \ln C_T
  \leq
  D_3 G^2 \ln^2(\delta / G).
 \end{equation}
 Thus, we complement the lower bound in~\eqref{eq:lower_bound_geometry_Miller} by an upper bound.
 More precisely, for a $(G,\delta)$-equidistributed sequence $Z$, it is clear that the complement of $\overline \equiset$ (in $\Lambda_L$ or $\RR^d$, respectively) always contains a ball of radius
 \[
  \rho = \frac{1}{2} \left( \frac{G}{2} - \delta \right)
  =
  G \frac{1 - 2 \delta/G}{4}
  \quad
  \text{whence}
  \quad
  G\frac{1 - 2 \delta/G}{4}
  \leq
  \sup_{B_\rho \subset \Omega \backslash \equiset} \rho.
 \]
 Combining this with~\eqref{eq:lower_bound_geometry_Miller} and~\eqref{eq:upper_bound_geometry}, we find
 \[
  G^2\frac{(1 - 2 \delta/G)^2}{64}
\leq
  \sup_{B_\rho \subset \Omega \backslash \equiset}
  \rho^2/4
  \leq
  \liminf_{T \to 0} T \ln C_T
  \leq
  \limsup_{T \to 0} T \ln C_T
  \leq
 D_3 G^2 \ln^2( \delta/G) .
 \]
 If we perform  the limit $G \to 0$ or $G \to \infty$, respectively, while keeping $\delta/G$ constant, this reasoning shows that the factor $G^2$ in the exponential term in~\eqref{eq:control_cost_equidistributed} is optimal.
 \end{remark}

\begin{remark}
 So far, we only used the fact that $V \geq 0$.
 If however, we have $V \geq \kappa > 0$, then the control cost should decay proportional to $\exp(- \kappa T)$ at large times.
 This can be seen by modifying the construction of the null-control, see~\cite{Taeufer-18, NakicTTV-control-prep}

 Conversely, if we only have $V \in L^\infty$, but $\inf V < 0$, then the situation might become even more interesting.
 In fact, the relevant quantity is $\min \sigma (- \Delta + V)$.
 If $\min \sigma (- \Delta + V) < 0$, then the semigroup $\exp((\Delta - V) t)$ will be non-contractive and the control cost will be bounded away from zero uniformly for all times $T > 0$.
 This situation can also be studied by an appropriate generalization of the above arguments, see~\cite{Taeufer-18, NakicTTV-control-prep}.
\end{remark}

\begin{remark}
 One can also study the \emph{fractional heat equation} for $\theta \in (1/2, \infty)$:
\begin{equation}
  \label{eq:fractional_heat_equation}
  \frac{\partial}{\partial t} u + (- \Delta)^\theta u = \chi_{S \cap \Omega} f,
  \quad
  u, f \in L^2((0,T) \times \Omega),\quad
  u(0, \cdot) = u_0 \in L^2(\Omega)
\end{equation}
 and deduce an estimate on the control cost.
 Here, again $\Omega \in \{ \Lambda_L, \RR^d \}$, and $S$ is a $(\gamma,a)$-thick set such that $0 < a_j \le L$ for all $j = 1, \dots, d$ in case $\Omega = \Lambda_L$.
 It is known that the fractional heat equation on one-dimensional intervals is null-controllable if and only if $\theta > 1/2$, see~\cite{MicuZ-06}.
 In order to deduce a control cost estimate, it suffices to deduce an uncertainty relation for the operator $(- \Delta)^\theta$.
 For that purpose, we estimate using the transformation formula for spectral measures, cf.~\cite[Prop.~4.24]{Schmuedgen-12}, and the uncertainty relation for the pure Laplacian in Corollaries~\ref{cor:spectral-inequality-full-space} and~\ref{cor:cubes-spectral}
 \begin{equation}
  \begin{aligned}
    \label{eq:uncertainty_relation_fractional_Laplacian}
    \lVert \chi_{(-\infty,\lambda]}(- \Delta)^\theta)  u \rVert_{L^2(\Omega)}^2
    &=
    \lVert \chi_{(-\infty,\lambda^{1/\theta}]}(- \Delta) u \rVert_{L^2(\Omega)}^2\\
    &\leq
    d_0 \euler^{d_1 \lambda^{1/(2 \theta)}}
    \lVert \chi_S \cdot \chi_{(-\infty,\lambda^{1/\theta}]}(- \Delta) u \rVert_{L^2(\Omega)}^2\\
    &=
    d_0 \euler^{d_1 \lambda^{1/(2 \theta)}}
    \lVert \chi_S \cdot \chi_{(-\infty,\lambda]}((- \Delta)^\theta) u \rVert_{L^2(\Omega)}^2
 \end{aligned}
 \end{equation}
 for all $\lambda \geq 0$ and all $u \in L^2(\Omega)$ where
 \[
  d_0
  =
  \left( \frac{ N_1}{\gamma} \right)^{N_2}
  \quad
  \text{and}
  \quad
  d_1 = N_3 \lVert a \rVert_1 \ln \left( \frac{N_1}{\gamma} \right),
 \]
 with constants $N_1$, $N_2$, and $N_3$, depending only on the dimension.
 Combining~\eqref{eq:uncertainty_relation_fractional_Laplacian} and Theorem~\ref{thm:NTTV_control_abstract}, we obtain the following result.
\end{remark}

\begin{corollary}
Let $\theta \in (1/2, \infty)$, $\Omega \in \{ \Lambda_L, \RR^d \}$, and $S$ be a $(\gamma,a)$-thick set such that,  in case $\Omega = \Lambda_L$, $0 < a_j \le L$ for all $j = 1, \dots, d$.
Then the system~\eqref{eq:fractional_heat_equation} is null-controllable in any time $T > 0$ with cost satisfying
 \begin{equation*} %\label{eq:control_cost_thick_fractional}
  C_T
  \leq
  \frac{D_1}{\gamma^{D_2} \sqrt{T}} \exp \left( \frac{D_3 \left( \lVert a \rVert_1 \ln( D_4 / \gamma  ) \right)^{\frac{2 \theta}{2 \theta - 1}}}{T^{\frac{1}{2 \theta - 1}}} \right)
 \end{equation*}
 for constants $D_1, \dots, D_4$, depending only on $\theta>1/2$ and on the dimension.
\end{corollary}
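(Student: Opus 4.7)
The bulk of the work for this corollary has already been carried out in the remark immediately preceding it, so the plan is essentially an execution of Theorem~\ref{thm:NTTV_control_abstract} with the spectral inequality~\eqref{eq:uncertainty_relation_fractional_Laplacian}. I would proceed as follows.

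First, I would set up the abstract framework. Take $\cH = \cU = L^2(\Omega)$, let $A = (-\Delta)^\theta$ defined via the functional calculus of the self-adjoint Laplacian on $\Omega$ (with the boundary conditions fixed in the definition of $\Delta$), and take $B = \chi_{S \cap \Omega}$. Since $B$ is bounded on $L^2(\Omega)$, I may choose $\beta = 0$, so that $\cH_\beta = \cH$ and $\lVert B \rVert_{\cL(\cU,\cH_\beta)} \leq 1$. The operator $A$ is non-negative and self-adjoint, so all hypotheses of Theorem~\ref{thm:NTTV_control_abstract} concerning $A$ and $B$ are immediate.

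Second, I would read off the spectral inequality for $A = (-\Delta)^\theta$ from~\eqref{eq:uncertainty_relation_fractional_Laplacian}. For every $\lambda > 0$ this gives \eqref{eq:abstract_spectral_inequality} with
\[
s = \frac{1}{2\theta}, \qquad d_0 = \left(\frac{N_1}{\gamma}\right)^{N_2}, \qquad d_1 = N_3 \lVert a \rVert_1 \ln\!\left(\frac{N_1}{\gamma}\right),
\]
where $N_1, N_2, N_3$ depend only on $d$. The hypothesis $\theta > 1/2$ is precisely what is needed to guarantee $s \in (0,1)$, which is the admissible range in Theorem~\ref{thm:NTTV_control_abstract}.

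Third, I would plug these values into the control cost bound of Theorem~\ref{thm:NTTV_control_abstract}, use the equivalence between final-state-observability and null-controllability (Theorem~\ref{thm:observabilty_controllability}), and simplify the exponents. The exponents arising from $s = 1/(2\theta)$ are
\[
\frac{1}{1-s} = \frac{2\theta}{2\theta - 1}, \qquad \frac{s}{1-s} = \frac{1}{2\theta - 1},
\]
so the exponential in Theorem~\ref{thm:NTTV_control_abstract}, with $\beta = 0$, becomes
\[
\exp\!\left( C_3 \left(\frac{d_1}{T^s}\right)^{1/(1-s)} \right)
= \exp\!\left( \frac{C_3 \, d_1^{2\theta/(2\theta-1)}}{T^{1/(2\theta-1)}} \right),
\]
which after substitution of $d_1$ matches the exponential term claimed in the corollary with $D_3 = C_3 N_3^{2\theta/(2\theta-1)}$ and $D_4 = N_1$. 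The prefactor $C_1 d_0 K^{C_2}/T$ with $K = 2 d_0 \lVert B \rVert + 1 \leq 2 d_0 + 1$ is polynomial in $d_0 = (N_1/\gamma)^{N_2}$, so absorbing everything into new dimension-and-$\theta$-dependent constants $D_1, D_2$ yields the prefactor $D_1/(\gamma^{D_2}\sqrt{T})$ stated in the corollary.

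There is no genuine obstacle; the only points requiring a small amount of care are verifying that $s = 1/(2\theta) \in (0,1)$ (which is equivalent to $\theta > 1/2$), and bookkeeping the exponents when $d_1$ is raised to the power $2\theta/(2\theta-1)$. The deep analytic content, namely the Kovrijkine-type spectral inequality and the abstract observability estimate, has already been assembled; the corollary is essentially a substitution.
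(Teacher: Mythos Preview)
Your proposal is correct and follows exactly the approach the paper takes: the corollary is stated without a separate proof precisely because the preceding remark already derives the spectral inequality~\eqref{eq:uncertainty_relation_fractional_Laplacian} with $s=1/(2\theta)$ and announces the combination with Theorem~\ref{thm:NTTV_control_abstract}. Your bookkeeping of the exponents and the absorption of the prefactor into $D_1/(\gamma^{D_2}\sqrt{T})$ matches the template used in the proof of Theorem~\ref{thm:control_cost_thick_2}.
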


\section{Convergence of solutions along exhausting cubes}

In this section we review certain approximation results which have been indicated in \cite{EgidiV-18} and spelled out with proofs in \cite{SeelmannV}.
They describe how controllability problems on unbounded domains can be approximated by corresponding problems on a sequence of bounded domains. Since these results apply to a larger class of Schr\"odinger operators than discussed so far, we will
introduce them first.

Let $\Omega\subset\RR^d$ be an open set and $\Lambda_L:=(-L/2,L/2)^d$ with $L>0$ as before.
Let $V\colon\RR^d\to\RR$ be a potential such that $V_+:=\max(V,0)\in L_{\mathrm{loc}}^1(\Omega)$ and $V_-:=\max(-V,0)$ is in the Kato
class; see, e.g., \cite[Section 1.2]{CyconFKS-87} for a discussion of the Kato class in $\RR^d$.
Under these hypotheses, one can define the Dirichlet Schr\"odinger operators $H_\Omega$ and $H_L=H_{\Omega\cap\Lambda_L}$
as lower semi-bounded self-adjoint operators on $L^2(\Omega)$ and $L^2(\Omega\cap\Lambda_L)$, respectively, associated with the
differential expression $-\Delta + V$ via their quadratic forms, with form core $C_c^\infty(\Omega)$ and
$C_c^\infty(\Omega\cap\Lambda_L)$, respectively.
For details of this construction we refer to \cite[Section 1.2]{CyconFKS-87}, \cite[Section 2]{HundertmarkS-04}, and the references therein.
In fact, our arguments apply to Schr\"odinger operators incorporating a magnetic vector potential as well, see \cite{SeelmannV} for
details.

Since we want to compare operators defined on two different Hilbert spaces, namely $L^2(\Omega)$ and $L^2(\Omega\cap\Lambda_L)$, we
need a notion of extension.
Corresponding to the orthogonal decomposition $L^2(\Omega)=L^2(\Omega\cap\Lambda_L)\oplus L^2(\Omega\setminus\Lambda_L)$,
we identify $H_L$ with the direct sum $H_L\oplus0$ on $L^2(\Omega)$.
Consequently, the subspace $L^2(\Omega\cap\Lambda_L)\subset L^2(\Omega)$ is a reducing subspace for the self-adjoint operator
$H_L$ on $L^2(\Omega)$. Hence, the exponential $\euler^{-tH_L}=\euler^{-tH_L}\oplus I$ for all $t\ge 0$ decomposes as well; see,
e.g., \cite[Definition~1.8]{Schmuedgen-12} and \cite[Satz~8.23]{Weidmann-00}. In particular, $\euler^{-tH_L}$ is a bounded
self-adjoint operator on $L^2(\Omega)$, and $\euler^{-tH_L}f=0$ on $\Omega\setminus\Lambda_L$ for all $f\in L^2(\Omega\cap\Lambda_L)$.

\subsection{Approximation of semigroups based on an exhaustion of the domain}

An important tool in what follows is an approximation result for Schr\"odinger semigroups. It applies to a sequence of
semigroups, all of the same type, but defined on different domains.

\begin{lemma}[\cite{SeelmannV}] %\label{lem:semigroup}
 Let $R>0$, $u_0\in L^2(\Omega\cap\Lambda_R)\subset L^2(\Omega)$, and $t>0$. Then, there exists a constant $C=C(t,d,V_-)>0$ such
 that for every $L\ge 2R$ one has
 \begin{equation*}
  \norm{(\euler^{-tH_\Omega}-\euler^{-tH_L})u_0}_{L^2(\Omega)}^2 \le C\exp\bigl(-\frac{L^2}{32t}\bigr)\norm{u_0}_{L^2(\Omega)}^2.
 \end{equation*}
\end{lemma}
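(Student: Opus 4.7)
I would prove the lemma via a Feynman--Kac representation combined with the reflection principle. Let $B=(B_s)_{s\ge 0}$ denote Brownian motion associated with the free semigroup $\euler^{t\Delta}$, and let $\tau_A$ be its first exit time from a set $A$. Under the hypotheses on $V$, the Feynman--Kac formula yields, for a.e.\ $x\in\Omega$,
\begin{equation*}
 (\euler^{-tH_\Omega}u_0)(x)=\E_x\bigl[\euler^{-\int_0^t V(B_s)\drm s}\,u_0(B_t)\,\mathbf{1}_{\{\tau_\Omega>t\}}\bigr],
\end{equation*}
and analogously for $\euler^{-tH_L}$ with $\tau_\Omega$ replaced by $\tau_{\Omega\cap\Lambda_L}$. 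Because the extension convention makes $(\euler^{-tH_L}u_0)(x)=0$ for $x\notin\Omega\cap\Lambda_L$, taking the difference gives
\begin{equation*}
 \bigl((\euler^{-tH_\Omega}-\euler^{-tH_L})u_0\bigr)(x)=\E_x\bigl[\euler^{-\int_0^t V(B_s)\drm s}\,u_0(B_t)\,\mathbf{1}_{A_L}\bigr],
\end{equation*}
with $A_L:=\{\tau_\Omega>t,\tau_{\Omega\cap\Lambda_L}\le t\}\subset\{\tau_{\Lambda_L}\le t\}$: the paths that stay in $\Omega$ up to time $t$ but leave $\Lambda_L$ before $t$.

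Next I would use $|\euler^{-\int V}|\le\euler^{\int V_-}$ and apply Cauchy--Schwarz under $\E_x$ with the split $\euler^{\int V_-}\cdot|u_0(B_t)|\mathbf{1}_{A_L}$ to separate the potential and the geometric factor:
\begin{equation*}
 \bigl|((\euler^{-tH_\Omega}-\euler^{-tH_L})u_0)(x)\bigr|^2\le C_1\cdot\E_x\bigl[|u_0(B_t)|^2\mathbf{1}_{\{\tau_{\Lambda_L}\le t\}}\bigr],
\end{equation*}
where $C_1=\sup_x\E_x[\euler^{2\int_0^t V_-(B_s)\drm s}]<\infty$ by Khasminskii's lemma, which applies because $V_-$ is in the Kato class.

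Then I would integrate over $x\in\Omega$ and use the time-reversibility of Brownian motion (equivalently, Fubini via the free Brownian-bridge decomposition of the heat kernel) in the form
\begin{equation*}
 \int_{\RR^d}\E_x\bigl[f(B_t)\mathbf{1}_{\{\tau_{\Lambda_L}\le t\}}\bigr]\drm x=\int_{\RR^d}f(y)\,\E_y[\mathbf{1}_{\{\tau_{\Lambda_L}\le t\}}]\drm y,\qquad f\ge 0.
\end{equation*}
Applied to $f=|u_0|^2$, which is supported in $\Lambda_R$, this yields
\begin{equation*}
 \|(\euler^{-tH_\Omega}-\euler^{-tH_L})u_0\|_{L^2(\Omega)}^2\le C_1\cdot\sup_{y\in\Lambda_R}\E_y\bigl[\mathbf{1}_{\{\tau_{\Lambda_L}\le t\}}\bigr]\cdot\|u_0\|_{L^2(\Omega)}^2.
\end{equation*}

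Finally I would estimate the exit probability. The assumption $L\ge 2R$ forces each coordinate $y_i$ of $y\in\Lambda_R$ to satisfy $|y_i|\le R/2\le L/4$, so any path starting at $y$ that exits $\Lambda_L$ must displace some coordinate by at least $L/2-R/2\ge L/4$. A union bound over the $d$ coordinates and the one-dimensional reflection principle (or Doob's exponential martingale bound) give, with the appropriate normalization of Brownian motion, a Gaussian estimate of the form $\E_y[\mathbf{1}_{\{\tau_{\Lambda_L}\le t\}}]\le c_d\exp(-L^2/(32t))$, which completes the proof with $C=c_d\,C_1$.

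\textbf{Main obstacle.} The principal technical difficulty is making the Feynman--Kac representation rigorous on an arbitrary open $\Omega$ with Dirichlet boundary under the stated hypotheses ($V_+\in L^1_{\mathrm{loc}}$, $V_-$ in the Kato class) and verifying the \emph{uniform} in $x$ boundedness of $\E_x[\euler^{2\int_0^t V_-(B_s)\drm s}]$ via Khasminskii. The extension to magnetic Schr\"odinger operators alluded to in \cite{SeelmannV} would additionally require the Feynman--Kac--It\^o formula and the diamagnetic inequality (to reduce the magnetic case to the non-magnetic one at the level of absolute values). The remaining probabilistic steps are classical, and the correct coefficient $1/(32t)$ in the exponent is traceable to the displacement $L/4$ in the reflection principle.
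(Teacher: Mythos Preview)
The paper itself does not contain a proof of this lemma; it is quoted from \cite{SeelmannV}, so there is no in-paper argument to compare against. Your proposed route via the Feynman--Kac representation, Khasminskii's lemma for the Kato-class negative part, and the reflection principle is precisely the standard technique (going back to \cite{HundertmarkS-04}, which the paper cites for this circle of ideas) and is essentially what \cite{SeelmannV} does. The time-reversal identity you invoke is correct once justified through the Brownian-bridge decomposition and the time-reversal invariance of the exit event, and the displacement bound $L/2-R/2\ge L/4$ indeed produces the exponent $L^2/(32t)$ with the stated Brownian normalization. Your identification of the main technical obstacle---the rigorous Feynman--Kac formula on an arbitrary open $\Omega$ under the stated hypotheses and the uniform Khasminskii bound---is accurate, and both points are handled in the literature you would need to cite.
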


The lemma implies that for every $t>0$ the exponential $\euler^{-tH_L}$ converges strongly to $\euler^{-tH_\Omega}$ as $ L\to \infty$.
Moreover, it exhibits a very explicit error bound if the support of the function $u_0$ is located inside some cube. However,
for what we present here the qualitative statement on strong convergence will be all what we will use.

\subsection{Continuous dependence on inhomogeneity}
%\label{sec:continuity-of-inverse}
%
In the applications we have in mind, the above approximation estimate for a sequence of semigroups needs to be complemented by an
approximation result with respect to change of the right-hand side of the partial differential equation and truncation of the
initial datum. This is presented next in a more general framework.

Let $\cH$ and $\cU$ be Hilbert spaces, and let $T>0$. Recall (cf.~Section~\ref{sec:uncertainty-control} above) that given a lower semi-bounded
self-adjoint operator $A$ on $\cH$, a bounded operator $B\colon \cU \to \cH$, $u_0\in\cH$, and $f\in L^2((0,T),\cU)$, the continuous function
$u\colon[0,T]\to\cH$ with
\begin{equation*}
 u(t) = \euler^{-tA}u_0 + \int_0^t \euler^{-(t-s)A}Bf(s) \drm s
\end{equation*}
is called the \emph{mild solution} to the abstract Cauchy problem
\begin{equation*}
 \frac{\partial}{\partial t} u(t) + Au(t) = Bf(t) \quad\text{ for }\quad 0<t<T,\quad u(0)=u_0.
\end{equation*}

\begin{lemma}[\cite{SeelmannV}]\label{lem:weakStrongConv}
 Let $A,A_n$, $n\in\NN$, be lower semi-bounded self-adjoint operators on the Hilbert space $\cH$ with a common lower bound
 $a\in\RR$. Assume that $(\euler^{-tA_n})_n$ converges strongly to $\euler^{-tA}$ for all $t>0$. Let $B,B_n$, $n\in\NN$, be bounded
 operators from $\cU$ to $\cH$  such that $(B_n)_n$ and $(B_n^*)_n$ converge strongly to $B$ and $B^*$, respectively.
 Moreover, let $(u_{0,n})_n$ be a sequence in $\cH$
 converging in norm to some $u_0\in\cH$. Let $f,f_n\in L^2((0,T),\cU)$, $n\in\NN$. Denote by $u$ and $u_n$, $n\in\NN$, the mild
 solutions to the abstract Cauchy problems
 \begin{equation*}
  \frac{\partial}{\partial t} u(t) + A u(t) = Bf(t) \quad\text{ for }\quad 0<t<T,\quad u(0)=u_0,
 \end{equation*}
 and
 \begin{equation*}
  \frac{\partial}{\partial t}   u_n(t) + A_n u_n(t) = B_nf_n(t) \quad\text{ for }\quad 0<t<T,\quad u_n(0)=u_{0,n},
 \end{equation*}
 respectively.
 \begin{enumerate}[(a)]
  \item If $(f_n)_n$ converges to $f$ in $L^2((0,T),\cU)$, then $(u_n(t))_n$ converges to $u(t)$ in $\cH$ for all $t\in(0,T]$.
        Moreover, $(u_n)_n$ converges to $u$ in $L^2((0,T),\cU)$.
  \item If $(f_n)_n$ converges to $f$ weakly in $L^2((0,T),\cU)$, then $(u_n(t))_n$ converges to $u(t)$ weakly in $\cH$ for all
        $t\in(0,T]$. Moreover, the sequence $(u_n)_n$ converges to $u$ weakly in $L^2((0,T),\cH)$.
 \end{enumerate}
\end{lemma}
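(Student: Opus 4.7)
The plan is to split the difference $u_n(t)-u(t)$ into three pieces: (i) the initial-condition contribution $\euler^{-tA_n}u_{0,n}-\euler^{-tA}u_0$, (ii) the ``inhomogeneity'' error $\int_0^t\euler^{-(t-s)A_n}B_n(f_n(s)-f(s))\drm s$, and (iii) the ``coefficient'' error $\int_0^t(\euler^{-(t-s)A_n}B_n-\euler^{-(t-s)A}B)f(s)\drm s$, and then to treat each piece separately. Before doing so, I would record two uniform bounds that govern the whole argument: $\|\euler^{-sA_n}\|\le\euler^{-as}$ for every $n$ and $s\ge 0$, coming from the common lower bound $a$, and $\sup_n\|B_n\|<\infty$ (hence also $\sup_n\|B_n^*\|<\infty$), which follows from the Banach--Steinhaus principle applied to the strongly convergent sequence $(B_n)$. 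Together these yield a uniform operator bound $\|\euler^{-sA_n}B_n\|\le M$ on $[0,T]$ that will serve as the dominating constant for every Bochner integral arising in the sequel.

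For part~(a), piece~(i) tends to zero in norm by the triangle inequality combined with the strong convergence of $\euler^{-tA_n}$ and the norm convergence of $u_{0,n}$; piece~(ii) is bounded in norm by $M\sqrt{T}\,\|f_n-f\|_{L^2}$ and hence vanishes; and piece~(iii) is handled by Bochner's dominated convergence theorem, noting that the integrand converges to $0$ pointwise in $s$ (by strong convergence of $B_n$ and of the semigroup) and is dominated by the integrable function $2M\|f(s)\|$. The $L^2((0,T),\cH)$-convergence then follows from a further dominated convergence argument in the time variable, since $\|u_n(t)-u(t)\|$ is uniformly bounded on $[0,T]$ by the same constant $M$ and by the $L^2$-norms of $f_n,f,u_{0,n},u_0$.

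For part~(b), piece~(i) converges in norm as in (a), so the task reduces to the integral terms. Fixing $\phi\in\cH$ and using duality, I would rewrite the pairing with $\phi$ as $\int_0^t\langle f_n(s),g_n(s)\rangle_\cU\drm s-\int_0^t\langle f(s),g(s)\rangle_\cU\drm s$ with $g_n(s):=B_n^*\euler^{-(t-s)A_n}\phi$ and $g(s):=B^*\euler^{-(t-s)A}\phi$. Then $g_n(s)\to g(s)$ in $\cU$ for every $s\in[0,t)$, and $\|g_n(s)\|\le M\|\phi\|$, so dominated convergence yields $g_n\to g$ in $L^2((0,T),\cU)$. Writing $\langle f_n,g_n\rangle_{L^2}-\langle f,g\rangle_{L^2}=\langle f_n-f,g\rangle_{L^2}+\langle f_n,g_n-g\rangle_{L^2}$, the first summand vanishes by the weak convergence of $(f_n)$, and the second by Cauchy--Schwarz together with the uniform bound on $\|f_n\|_{L^2}$ (weakly convergent sequences being norm-bounded). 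This gives the pointwise weak convergence $u_n(t)\rightharpoonup u(t)$. Weak convergence in $L^2((0,T),\cH)$ is then obtained by the same argument after testing against $\psi\in L^2((0,T),\cH)$ and using Fubini to swap the order of integration, with $g_n(s)$ replaced by $B_n^*\int_s^T\euler^{-(t-s)A_n}\psi(t)\drm t$.

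The main obstacle I anticipate is concentrated in part~(b): I need the auxiliary sequence $g_n$ to converge \emph{strongly} in $L^2((0,T),\cU)$ in order to pair it against the weakly convergent sequence $(f_n)$, whereas the hypotheses on $B_n^*$ and on the semigroups only provide strong, not norm, convergence of operators. The resolution is precisely that, after evaluating at the fixed vector $\phi$ (or, in the $L^2((0,T),\cH)$ variant, at $\psi(t)$), the pointwise convergence $g_n(s)\to g(s)$ in $\cU$ together with the uniform bound $M\|\phi\|$ triggers Bochner's dominated convergence theorem at the $L^2$-level. All other steps---the initial-data term and piece~(ii) in part~(a), as well as the Fubini exchange for the $L^2((0,T),\cH)$ variant---are routine once the uniform semigroup and operator bounds are in place.
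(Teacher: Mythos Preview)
The paper does not contain a proof of this lemma; it is stated with a citation to \cite{SeelmannV} and used as a black box. Consequently there is nothing in the present paper to compare your argument against.

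That said, your proof plan is correct and is the standard route one would expect in \cite{SeelmannV} as well: the three-term splitting of $u_n(t)-u(t)$, the uniform bounds $\|\euler^{-sA_n}\|\le\euler^{-as}$ and $\sup_n\|B_n\|<\infty$ (via Banach--Steinhaus), dominated convergence for piece~(iii), and---crucially for part~(b)---passing to the adjoint side so that the operator sequence acts on a \emph{fixed} vector $\phi$ (resp.\ $\psi(t)$), thereby upgrading strong operator convergence to norm convergence of the auxiliary functions $g_n$, which can then be paired with the merely weakly convergent $(f_n)$. The Fubini step and the final dominated-convergence argument for $G_n(s)=B_n^*\int_s^T\euler^{-(t-s)A_n}\psi(t)\,\drm t$ are handled correctly. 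One cosmetic point: the statement in the paper has a typo in part~(a)---the target space for $(u_n)$ should be $L^2((0,T),\cH)$, not $L^2((0,T),\cU)$---and your argument correctly establishes convergence in $L^2((0,T),\cH)$.
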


If the sequence $(f_n)_n$ consists of null-controls as in Definition~\ref{def:null_controllable}, then this property is inherited by the limit $f$, more precisely:
\begin{corollary}%\label{cor:weakConv}
 If in the situation of Lemma \ref{lem:weakStrongConv} the sequence $(f_n)_n$ converges to $f$ weakly in $L^2((0,T),\cU)$ and
 for every $n$ one has $u_n(T)=0$, then also $u(T)=0$.
\end{corollary}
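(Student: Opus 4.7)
The plan is essentially a one-line deduction from Lemma~\ref{lem:weakStrongConv}(b), so I would present it as such and make sure the hypotheses are properly lined up. First I would invoke part (b) of the lemma directly: since $(f_n)_n$ converges weakly to $f$ in $L^2((0,T),\cU)$, and all other hypotheses (strong semigroup convergence, strong convergence of $B_n$ and $B_n^*$, norm convergence of $u_{0,n}\to u_0$) are assumed as part of the setting of Lemma~\ref{lem:weakStrongConv}, we conclude that $u_n(t)\rightharpoonup u(t)$ weakly in $\cH$ for every $t\in(0,T]$.

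Next I would specialize this to the endpoint $t=T$. By hypothesis $u_n(T)=0$ for every $n\in\NN$, so the weak limit of the constant zero sequence is the zero vector. Uniqueness of the weak limit then forces $u(T)=0$, which is the claim.

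The only conceivable subtlety is to make sure that the weak convergence statement in Lemma~\ref{lem:weakStrongConv}(b) is indeed applicable at $t=T$ (not just $t\in(0,T)$), but the formulation of the lemma explicitly includes $t\in(0,T]$, so there is nothing to check. There is no real obstacle here; the content of the corollary is entirely packaged in Lemma~\ref{lem:weakStrongConv}(b), and the role of the corollary is merely to record the useful consequence that the property of being a null-control is preserved under weak limits, given the continuity-type properties of the mild solution map established in the lemma.
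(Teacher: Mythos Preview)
Your proposal is correct and matches the paper's intended argument: the corollary is stated without proof precisely because it is an immediate consequence of Lemma~\ref{lem:weakStrongConv}(b), applied at $t=T$ together with uniqueness of weak limits. There is nothing to add.
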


\subsection{Construction of controls via exhaustion of the domain}
In certain situations it may be easier to infer (or is already known) that a certain variant of the heat equation exhibits a null-control
provided the domain of the problem is bounded. With the operators $H_L$ and $H_\Omega$ introduced above we present a criterion how
one can infer the existence of a null-control of the corresponding problem on an unbounded domain.

\begin{theorem}[\cite{SeelmannV}]\label{thm:main}
 Let $S\subset\RR^d$ be measurable, $\tilde u\in L^2(\Omega)$, and $(L_n)_n$ a sequence in $(0,\infty)$ with $L_n\nearrow\infty$ as
 $n\to\infty$. Let $f_n\in L^2((0,T),L^2(\Omega\cap\Lambda_{L_n}\cap S))$ for each $n\in\NN$ be a null-control for the initial
 value problem
 \begin{equation}\label{eq:Schroedinger-control-Gamma_n}
 \frac{\partial}{\partial t} u(t) + H_{L_n} u(t) = \chi_{\Omega \cap \Lambda_{L_n}\cap S}f_n(t)\quad\text{ for }\quad 0<t<T,\quad u(0)=
 {\chi_{\Omega \cap \Lambda_{L_n}}\tilde u},
 \end{equation}
 and $u_n$ the corresponding mild solution.

 Suppose that $(f_n)_n$ converges weakly in $L^2((0,T),L^2(\Omega))$ to some function $f$. Then, $f$ is a null-control for
 \begin{equation}\label{eq:Schroedinger-control-Gamma}
 \frac{\partial}{\partial t} u(t) + H_{\Omega} u(t) = \chi_{\Omega\cap S}f(t)\quad\text{ for }\quad 0<t<T,\quad u(0)=\tilde u,
 \end{equation}
 and the corresponding mild solution is the weak limit of $(u_n)_n$ in $L^2((0,T),L^2(\Omega))$.
\end{theorem}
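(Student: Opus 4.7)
The plan is to apply Lemma~\ref{lem:weakStrongConv}(b), followed by its corollary, after identifying all the objects on the common Hilbert space $L^2(\Omega)$. Extend each $f_n$ by zero to an element of $L^2((0,T),L^2(\Omega))$, and view the control operators $B_n=\chi_{\Omega\cap\Lambda_{L_n}\cap S}$ and $B=\chi_{\Omega\cap S}$ as bounded self-adjoint multiplication operators on $L^2(\Omega)$. Set $A_n=H_{L_n}$ (extended as $H_{L_n}\oplus 0$ on $L^2(\Omega)$ per the convention introduced above) and $A=H_\Omega$. Since $V_-$ is in the Kato class, the usual form estimates provide a common lower bound for $H_\Omega$ and all $H_{L_n}$, so the hypothesis on a common lower bound for $A$, $(A_n)_n$ is met.

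Next I would verify the remaining three hypotheses of Lemma~\ref{lem:weakStrongConv}. For the semigroup convergence $\euler^{-tH_{L_n}}\to\euler^{-tH_\Omega}$ strongly for every $t>0$, I would invoke the preceding approximation lemma for Schr\"odinger semigroups: it gives norm convergence on every $u_0$ supported in some $\Lambda_R$, and a density plus uniform-boundedness argument (using the common lower bound) extends this to all of $L^2(\Omega)$. For $B_n\to B$ strongly (and similarly $B_n^*=B_n\to B^*=B$), pointwise convergence $\chi_{\Omega\cap\Lambda_{L_n}\cap S}\to\chi_{\Omega\cap S}$ together with dominated convergence suffices. The norm convergence $u_{0,n}=\chi_{\Omega\cap\Lambda_{L_n}}\tilde u\to\tilde u$ in $L^2(\Omega)$ is again immediate from dominated convergence. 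Combined with the assumed weak convergence $f_n\to f$ in $L^2((0,T),L^2(\Omega))$, Lemma~\ref{lem:weakStrongConv}(b) yields that the mild solution $u$ to~\eqref{eq:Schroedinger-control-Gamma} satisfies $u_n(t)\rightharpoonup u(t)$ weakly in $L^2(\Omega)$ for every $t\in(0,T]$, and $u_n\rightharpoonup u$ weakly in $L^2((0,T),L^2(\Omega))$.

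Since each $f_n$ is a null-control for~\eqref{eq:Schroedinger-control-Gamma_n}, one has $u_n(T)=0$ for all $n$. The corollary to Lemma~\ref{lem:weakStrongConv} then forces $u(T)=0$, which is precisely the statement that $f$ is a null-control for~\eqref{eq:Schroedinger-control-Gamma} with initial datum $\tilde u$; the weak-limit assertion for $(u_n)$ in $L^2((0,T),L^2(\Omega))$ is the remaining conclusion of Lemma~\ref{lem:weakStrongConv}(b).

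The main obstacle is not conceptual but lies in the careful reduction to a single ambient Hilbert space: one must check that the direct-sum extension $H_{L_n}\oplus 0$ really does behave as required, in particular that the semigroup approximation lemma survives the passage from compactly supported initial data to arbitrary $u_0\in L^2(\Omega)$ via density, and that the zero-extension of each $f_n$ is compatible with the support constraint $\chi_{\Omega\cap\Lambda_{L_n}\cap S}f_n=f_n$ so that the inhomogeneity in~\eqref{eq:Schroedinger-control-Gamma_n} coincides, as an element of $L^2((0,T),L^2(\Omega))$, with $B_n f_n$. Once this bookkeeping is in place, the result follows by combining the two preceding lemmas with essentially no additional analytic input.
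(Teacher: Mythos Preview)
Your proposal is correct and follows precisely the route the paper indicates: the theorem is obtained from Lemma~\ref{lem:weakStrongConv}(b) and its corollary in the situation $\cU=\cH=L^2(\Omega)$ with $A=H_\Omega$, $A_n=H_{L_n}$, $B=\chi_{\Omega\cap S}$, and $B_n=\chi_{\Omega\cap\Lambda_{L_n}\cap S}$, after verifying the hypotheses exactly as you outline. The bookkeeping points you flag (common lower bound from the Kato-class assumption, density argument for strong semigroup convergence, support compatibility of the zero-extended $f_n$) are the right ones to check and present no difficulty.
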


The above theorem is based on Lemma~\ref{lem:weakStrongConv} in the situation $\cU=\cH=L^2(\Omega)$ with $A=H_\Omega$,
$A_n=H_{L_n}$, $B=\chi_{\Omega\cap S}$, and $B_n=\chi_{\Omega\cap\Lambda_{L_n}\cap S}$.
In this case, due to the discussion before Definition~\ref{def:optimal-feedback},
the null-controls for~\eqref{eq:Schroedinger-control-Gamma_n} and~\eqref{eq:Schroedinger-control-Gamma} can indeed be assumed to be
supported in $\Omega\cap S$ and $\Omega\cap\Lambda_{L_n}\cap S$, respectively.

Note that if the null-controls $f_n$ in Theorem~\ref{thm:main} are uniformly bounded, that is,
\begin{equation}\label{eq:mainCrit}
 \norm{f_n}_{L^2((0,T),L^2(\Omega\cap\Lambda_{L_n}\cap S))} \le c \quad\text{ for all }\quad n\in\NN
\end{equation}
for some constant $c>0$, then $(f_n)_n$ has a weakly convergent subsequence with limit in $L^2((0,T),L^2(\Omega\cap S))$.
Theorem~\ref{thm:main} can then be applied to every such weakly convergent subsequence, and the corresponding weak limit $f$ of the
subsequence of $(f_n)_n$ automatically satisfies the bound
\begin{equation}\label{eq:limitBound}
 \norm{f}_{L^2((0,T),L^2(\Omega\cap S))}\le c.
\end{equation}

This leads to the following corollary to Theorem \ref{thm:main}.

\begin{corollary} %\label{cor:main}
 Let $S\subset\RR^d$ be measurable, $\tilde u\in L^2(\Omega)$, and $(L_n)_n$ a sequence in $(0,\infty)$ with $L_n\nearrow\infty$ as
 $n\to\infty$. Let $f_n\in L^2((0,T),L^2(\Omega\cap\Lambda_{L_n}\cap S))$ for each $n\in\NN$ be a null-control for the initial
 value problem \eqref{eq:Schroedinger-control-Gamma_n}, and let $u_n$ be the corresponding mild solution.

 Assume that there is a constant $c\in \RR $ such that \eqref{eq:mainCrit} holds. Then there exists a subsequence of $(f_n)_n$ which
 converges weakly to a null-control $f\in L^2((0,T),L^2(\Omega\cap S))$ for \eqref{eq:Schroedinger-control-Gamma}, satisfying
 \eqref{eq:limitBound} as well. The mild solution $u$ associated to (any such weak accumulation point) $f$ is the weak limit of the
 corresponding subsequence of $(u_n)_n$ in $L^2((0,T),L^2(\Omega))$.
\end{corollary}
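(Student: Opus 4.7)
The plan is to derive this essentially as a consequence of Theorem~\ref{thm:main} combined with a standard weak compactness argument in Hilbert space. First, I would view each null-control $f_n$, originally defined on $(0,T)\times(\Omega\cap\Lambda_{L_n}\cap S)$, as an element of the fixed Hilbert space $L^2((0,T),L^2(\Omega\cap S))$ via extension by zero outside $\Omega\cap\Lambda_{L_n}\cap S$. The uniform bound in \eqref{eq:mainCrit} then reads $\|f_n\|_{L^2((0,T),L^2(\Omega\cap S))}\le c$ for all $n\in\NN$.

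Since $L^2((0,T),L^2(\Omega\cap S))$ is a Hilbert space and hence reflexive, bounded sequences are weakly sequentially precompact (Banach--Alaoglu). Thus, after passing to a subsequence, which I still denote by $(f_n)_n$, there exists $f\in L^2((0,T),L^2(\Omega\cap S))$ such that $f_n\to f$ weakly. Reinterpreting this weak convergence on $L^2((0,T),L^2(\Omega))$ (by further zero extension, which is compatible with the weak topology), we are precisely in the situation of the hypothesis of Theorem~\ref{thm:main}.

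Applying Theorem~\ref{thm:main} to this subsequence, I would immediately obtain that $f$ is a null-control for \eqref{eq:Schroedinger-control-Gamma} and that the associated mild solution $u$ is the weak limit of the corresponding subsequence of $(u_n)_n$ in $L^2((0,T),L^2(\Omega))$. Since $f$ is, by construction, supported in $(0,T)\times(\Omega\cap S)$, it indeed belongs to $L^2((0,T),L^2(\Omega\cap S))$ as required.

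It remains to establish the bound \eqref{eq:limitBound}, which follows from the lower semicontinuity of the norm under weak convergence:
\begin{equation*}
 \|f\|_{L^2((0,T),L^2(\Omega\cap S))} \le \liminf_{n\to\infty} \|f_n\|_{L^2((0,T),L^2(\Omega\cap\Lambda_{L_n}\cap S))} \le c.
\end{equation*}
There is no real obstacle here; the corollary is essentially a packaging of Theorem~\ref{thm:main} with weak compactness. The only point requiring minor care is the consistent identification of the various $L^2$-spaces via zero extension so that the weak convergence in Theorem~\ref{thm:main}'s hypothesis is verified in the same ambient space for all $n$.
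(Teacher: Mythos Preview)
Your proposal is correct and follows essentially the same approach as the paper: the paper's proof (given in the paragraph immediately preceding the corollary) consists precisely of extracting a weakly convergent subsequence from the uniformly bounded $(f_n)_n$, applying Theorem~\ref{thm:main} to it, and noting that the weak limit inherits the norm bound. Your additional remarks on the consistent identification of the $L^2$-spaces via zero extension make explicit a point the paper leaves implicit.
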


As discussed in previous sections, the control cost estimate \eqref{eq:mainCrit} can be inferred by a final state observability
estimate. Consequently, a scale-free uncertainty principle or spectral inequality, as formulated in Theorem \ref{thm:log-ser-cubes}
or Theorem \ref{thm:NakicTTV}, leads not only to control cost estimates on a sequence of bounded cubes $\Lambda_L$ but also to the
limiting domain $\Omega=\RR^d$. This means that results like Theorem~\ref{thm:control_cost_thick_Egidi_Veselic} or
Theorem~\ref{thm:control_cost_equidistributed} (for $\Omega=\RR^d$) could be obtained by a (partially) alternative method, where
one performs hard analysis for partial differential equations only on bounded domains and then invokes operator theoretic methods
to lift the results to unbounded domains. For details see \cite{SeelmannV}.

\vspace{5mm}

\noindent\textbf{Acknowledgments}\\
The initial phase of this research was supported by travel grants within the binational Croatian-German PPP-Project \emph{The cost of controlling the heat flow in a multiscale setting}.
M.~E.~and I.~V.~were supported in part by the Deutsche Forschungsgemeinschaft under the project grant Ve 253/7-1 \emph{Multiscale Version of the Logvinenko--Sereda Theorem}.
I.~N.~was supported in part by the Croatian Science Foundation under the projects 9345 and IP-2016-06-2468.
%%%%%%%%%%%%%%%%%%%%%%%%%%%%%%%%%%%%%%%%%%%%%%%%%%%%%%%%%%%%%%%%%%%%%%%%%%%%%%%%%%%%%%%%%%%%%%%%%%%%%%%%%%%%%%%%%%%%%%%%%%%%%%%%%%%
%%%%%%%%%%%%%%%%%%%%%%%%%%%%%%%%%%%%%%%%%%%%%%%%%%%%%%%%%%%%%%%%%%%%%%%%%%%%%%%%%%%%%%%%%%%%%%%%%%%%%%%%%%%%%%%%%%%%%%%%%%%%%%%%%%%
%%% Bibliography
%%%%%%%%%%%%%%%%%%%%%%%%%%%%%%%%%%%%%%%%%%%%%%%%%%%%%%%%%%%%%%%%%%%%%%%%%%%%%%%%%%%%%%%%%%%%%%%%%%%%%%%%%%%%%%%%%%%%%%%%%%%%%%%%%%%
%%%%%%%%%%%%%%%%%%%%%%%%%%%%%%%%%%%%%%%%%%%%%%%%%%%%%%%%%%%%%%%%%%%%%%%%%%%%%%%%%%%%%%%%%%%%%%%%%%%%%%%%%%%%%%%%%%%%%%%%%%%%%%%%%%%

% \bibliographystyle{amsalpha}
% \bibliography{short}

\providecommand{\bysame}{\leavevmode\hbox to3em{\hrulefill}\thinspace}
\providecommand{\MR}{\relax\ifhmode\unskip\space\fi MR }
% \MRhref is called by the amsart/book/proc definition of \MR.
\providecommand{\MRhref}[2]{%
  \href{http://www.ams.org/mathscinet-getitem?mr=#1}{#2}
}
\providecommand{\href}[2]{#2}

\end{document}